\title{Orientable homotopy modules}
\author{Fr\'ed\'eric D\'eglise}
\address{LAGA\\
CNRS~(UMR 7539)\\
Universit\'e Paris~13\\
\hbox{Avenue~Jean-Baptiste~Cl\'ement}\\
93430 Villetaneuse\\France}
\email{deglise@math.univ-paris13.fr}
\urladdr{http://www.math.univ-paris13.fr/~deglise/}
\thanks{Partially supported by the ANR (grant No. ANR-07-BLAN-042)}
\date{February 2010.}
\subjclass{14F42, 19E15, 19D45}
\keywords{Stable homotopy theory of schemes, motivic cohomology and transfers,
 algebraic cobordism}
\newtheorem*{thmi}{Theorem}
\newtheorem{thm}{Theorem}[subsection]
\newtheorem{prop}[thm]{Proposition}
\newtheorem{cor}[thm]{Corollary}
\newtheorem{lm}[thm]{Lemma}
\theoremstyle{definition}
\newtheorem{df}[thm]{Definition}
\newtheorem{ex}[thm]{Example}
\newtheorem{num}[thm]{}
\theoremstyle{remark}
\newtheorem{rem}[thm]{Remark}
\numberwithin{equation}{thm}
\newcommand{\ab}{\mathscr Ab}
\newcommand{\T}{\mathscr T}
\newcommand{\sm}{Sm_k}
\newcommand{\smc}{Sm^{cor}_k}
\newcommand{\smp}{Sm_{k,\bullet}}
\newcommand{\SH}{SH(k)}
\newcommand{\Sp}{Sp^\Sigma(k)}
\newcommand{\DMgm} {DM_{gm}\!(k)}
\newcommand{\DM} {DM(k)}
\newcommand{\DMe} {DM^{eff}(k)}
\newcommand{\hmod} {\Pi_*(k)}
\newcommand{\hmor} {\Pi_*^{\eta=0}(k)}
\newcommand{\hmtr} {\Pi_*^{tr}(k)}
\newcommand{\modl} {\mathscr{M}Cycl(k)}
\newcommand{\pmodl} {\ab^{\ecorps}}
\newcommand{\ecorps} {\tilde{\mathscr E}_k}
\newcommand{\model} {\mathcal M^{sm}}
\newcommand{\smod}[1]{#1\!-\!mod}
\newcommand{\wmod}[1]{#1\!-\!mod^w}
\newcommand{\spmod}[1]{#1\!-\!mod^{Sp}}
\newcommand{\pro}[1] {\mathrm{pro}\!-\!{#1}}
\newcommand{\sus}{\Sigma^\infty} 
\newcommand{\ecuppx}[1] { \scriptstyle \ \boxtimes_{#1} \textstyle }
\DeclareMathOperator{\ohtr}{\otimes^{Htr}}
\newcommand{\ilim}[1] { \underset{#1}{\varinjlim} \ }
\newcommand{\pplim}[1] { \underset{#1}{"{\varprojlim}"} \ }
\newcommand{\pprod}[1] { \underset{#1}{"{\prod}"} \ }
\newcommand{\drap}[1] {\mathcal D \left( #1 \right)}
\DeclareMathOperator\pic{Pic}
\newcommand{\spec}[1] {\operatorname{\mathrm{Spec}}(#1)}
\DeclareMathOperator{\Hom}{Hom}
\DeclareMathOperator{\uHom}{\underline{Hom}}
\newcommand{\nis}{\mathrm{Nis}}
\newcommand{\nspi}[3]{\pi_{#1}(#3)_{#2}} 
\newcommand{\pFD}[1]{$(\mathbf{FD}_{#1})$}
\newcommand{\pC}[1]{$(\mathbf{C}_{#1})$}
\newcommand{\NN} {\mathbb N}
\newcommand{\ZZ} {\mathbb Z}
\newcommand{\QQ} {\mathbb Q}
\renewcommand{\AA} {\mathbb A}
\newcommand{\GG} {\mathbb G_m }
\newcommand{\PP} {\mathbb P}
\newcommand{\cO}{\mathcal O}
\newcommand{\cI}{\mathcal I}
\newcommand{\E}{\mathbb E}
\newcommand{\F}{\mathbb F}
\newcommand{\HH}{\mathbf H}
\newcommand{\uHH}{\underline{\mathbf H}}
\newcommand{\R}{\mathbf R}
\newcommand{\uR}{\underline{\mathbf R\!}\,}
\newcommand{\MGL}{\mathbf{MGL}}
\newcommand{\uMGL}{\underline{\mathbf{MGL}}}
\newcommand{\spi}[3]{\underline \pi_{#1}(#3)_{#2}} 
\newcommand{\uK}{\underline K^M}
\begin{document}

\maketitle

\tableofcontents

\begin{abstract}
We prove a conjecture of Morel identifying
 Voevodsky's homotopy invariant sheaves with transfers
 with spectra in the stable homotopy category 
 which are concentrated in degree zero for the homotopy t-structure
 and have a trivial action of the Hopf map.
 This is done by relating these two kind of objects to Rost's cycle modules.
 Applications to algebraic cobordism and construction of cycle classes are given.
\end{abstract}

\section*{Introduction}

In \cite{Mor1} and \cite{Mor2},
 F.~Morel started a thorough analysis of the stable homotopy category
 of schemes over a field culminating by computing the zero-th stable
 homotopy groups of the zero sphere $\nspi 0 * {S^0}$
  in a joint work with M.~Hopkins. 
 The result involves a mixture of the Witt ring of the field
 and its Milnor K-theory.
This paper is built on the idea of Morel that
 the Witt part contains the obstruction to orientation in stable homotopy.
 Indeed, let us recall that $\nspi 0 * {S^0}$ is generated
 as a graded abelian group by units of the field $k$
 and the Hopf map\footnote{See paragraph \ref{num:Hopf}.} $\eta$.
 As in topology, the Hopf map $\eta$ is the first obstruction
 for a ring spectrum to be oriented.\footnote{See remark \ref{rem:mhor&P^1}.}

On the other hand the units of $k$ generates a subring of
 $\nspi 0 * {S^0}$ which turns out to be exactly the Milnor
 K-theory of $k$, or, in contemporaneous terms,
 the part of motivic cohomology where the degree is equal to the twist.
 It is understand now, though still conjectural,
 that motivic cohomology
 is the universal oriented cohomlogy with additive formal group law.

\bigskip

To build something out of these general remarks,
 one has to go more deeply into the motivic homotopy theory.
 The category $\DMe$ of V.~Voevodsky's motivic complexes over a perfect field $k$
 is built out
 of the so called homotopy invariant sheaves with transfers 
 who have the distinctive property that their cohomology is
 homotopy invariant.
 
Starting from the point that they form
 the heart of a natural t-structure on $\DMe$,
 Morel introduced the homotopy t-structure on the stable homotopy
 category $\SH$, analog of the previous one,
 and identified its heart as some graded sheaves without transfers
 but still with homotopy invariant cohomology.
 They are called \emph{homotopy modules} by Morel
 (see definition \ref{df:htp_modl}) and we denote their category
 by $\hmod$.
These sheaves have to be thought as \emph{stable homotopy groups}:
 in fact, taking the zero-th homology of $S^0$ with respect
 to the homotopy t-structure gives a homotopy module $\spi 0 * {S^0}$
 whose fiber at the point $k$ is precisely the graded
 abelian group $\nspi 0 * {S^0}$.
 Note that, as a consequence,
  any homotopy modules has a natural action of
  the Hopf map $\eta$ seen as an element of  $\spi 0 * {S^0}(k)$.

To relate these two kind of sheaves, it is more accurate
 to introduce the non effective (\emph{i.e.} \emph{stable})
 version of $\DMe$, simply denoted by $\DM$.
 The canonical t-structure on $\DMe$ extends to a t-structure
 on $\DM$ whose heart $\hmtr$ is now made by
 some graded homotopy invariant sheaves
 with transfers which we call \emph{homotopy modules with
  transfers} (see definition \ref{df:hmtr}).
  
Then the naural map $\DM \rightarrow \SH$ induces a natural functor
$$
\gamma_*:\hmtr \rightarrow \hmod
$$
which basically do nothing more than forgetting the transfers.
In this article,
 we prove the following conjecture of Morel:
\begin{thmi}
\begin{enumerate}
\item The functor $\gamma_*$ is fully faithful 
and its essential image consists of the homotopy modules
on which $\eta$ acts as $0$.
\item The functor $\gamma_*$ is monoidal
 and homotopy modules with transfers can be described as homotopy modules
 with an action of the unramified Milnor K-theory sheaf.
\end{enumerate}
\end{thmi}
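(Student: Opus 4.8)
The plan is to realise both $\hmtr$ and $\hmor$ as incarnations of Rost's category $\modl$ of cycle modules, so that part~(1) becomes a chain of equivalences
\[
\hmtr \xrightarrow{\ \sim\ } \modl \xrightarrow{\ \sim\ } \hmor \hookrightarrow \hmod
\]
refining the functor $\gamma_*$, and part~(2) follows by carrying the monoidal structures along this chain. On the transfers side, one identifies a homotopy module with transfers with its system of fibres $\{M_*(E)\}_E$ over the function fields $E/k$: the residues come from the $\GG$-loop (contraction) structure and the corestrictions from the transfers, while conversely a cycle module is reconstructed from its Rost--Schmid/Gersten complex. On the other side, Morel's computation $\spi 0 * {S^0}\cong K^{MW}_*$ together with the isomorphism $K^{MW}_*/\eta\cong\uK$ shows that a homotopy module $M_*$ satisfies $\eta\cdot M_*=0$ if and only if the tautological $\spi 0 * {S^0}$-action factors, necessarily uniquely, through $\uK$; hence $\hmor$ is exactly the category of homotopy modules equipped with an action of $\uK$, with no extra data involved.

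The first point to check is that $\gamma_*$ does land in $\hmor$. The functor $\DM\to\SH$ inducing $\gamma_*$ is the forgetful functor for modules over the motivic Eilenberg--MacLane spectrum $\HH\ZZ$; for such a module $N$ the action of $\spi 0 * {S^0}$ on $\gamma_*(N)$ factors through $\spi 0 * {\HH\ZZ}=\uK$, and the induced map $\spi 0 * {S^0}\to\spi 0 * {\HH\ZZ}$ is the canonical surjection $K^{MW}_*\to\uK$ killing $\eta$ (reflecting the orientability of $\HH\ZZ$), so $\eta$ acts as $0$ on $\gamma_*(N)$. Thus $\gamma_*$ factors through the full subcategory $\hmor$, and it is faithful because a morphism of sheaves with transfers is determined by the underlying morphism of sheaves.

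The heart of the argument is to upgrade $\gamma_*\colon\hmtr\to\hmor$ to an equivalence. Given $M_*\in\hmor$, its system of fibres $\{M_*(E)\}_E$ automatically carries restrictions, residues and the $\uK=K^M_*$-module structure; what is missing is the family of corestrictions $N_{E/F}$ for finite extensions, subject to Rost's axioms (R1a)--(R3e) and to the conditions (FD) and (C). I would produce these corestrictions from the Gysin/transfer maps available in this setting --- for instance by reducing the Milnor--Witt transfers modulo $\eta$, or from transfers along finite flat morphisms --- and the projection formula then forces $N_{E/F}$ to be semilinear over $\uK$; this semilinearity of all the structure maps reduces Rost's axioms to their known validity on $\uK$ itself and to standard compatibilities of residues and norms in the Gersten complex. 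The outcome is a cycle module $\tilde M\in\modl$. That $\tilde M$ recovers $M_*$ on underlying homotopy modules, and that any morphism in $\hmor$ automatically respects the reconstructed transfers (so that $\gamma_*$ is full), both follow from the fact that every map in sight is determined by its fibres and is $\uK$-semilinear while the $\uK$-action is canonical. I expect this step --- constructing transfers on an arbitrary $\eta$-trivial homotopy module and verifying Rost's axioms --- to be the real obstacle; what surrounds it is either recalled or formal.

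For part~(2), the equivalence $\hmtr\simeq\modl$ is monoidal for the product $\ohtr$ and Rost's tensor product of cycle modules, and under $\modl\simeq\hmor$ the latter becomes the relative tensor product over $\uK$. Since $\uK$ is the unit of $(\hmtr,\ohtr)$ and $\gamma_*$ sends it to $\uK$ with its tautological $\uK$-module structure, the lax monoidal structure carried by $\gamma_*$ (being a right adjoint) has invertible coherence maps $\gamma_*(M)\otimes_{\uK}\gamma_*(N)\xrightarrow{\ \sim\ }\gamma_*(M\ohtr N)$ and unit map the projection $\spi 0 * {S^0}\cong K^{MW}_*\twoheadrightarrow\uK$. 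Unwound, this says precisely that $\gamma_*$ is monoidal and that $\hmtr$ is monoidally equivalent to the category of homotopy modules with an action of $\uK$, which is part~(2). All coherence checks here are formal once the objects have been matched as above.
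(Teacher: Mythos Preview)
Your high-level strategy matches the paper's exactly: both sides are identified with Rost's $\modl$, with the known equivalence $\hmtr\simeq\modl$ on one side and a newly-constructed functor $\tilde\rho:\hmor\to\modl$ on the other. You also correctly identify the hard step as building the cycle-module structure on the fibres of an arbitrary $M_*\in\hmor$, and your treatment of part~(2) is close in spirit to the paper's Corollaries~4.1.1 and~4.1.2.

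Where your proposal diverges from the paper, and where it is genuinely incomplete, is the mechanism for that hard step. You suggest obtaining corestrictions by ``reducing the Milnor--Witt transfers modulo $\eta$'' and then arguing that $\uK$-semilinearity collapses Rost's axioms to their validity on $\uK$. The paper does something quite different and more structured: from the vanishing of $\eta$ it first extracts a \emph{weak $\HH$-module} structure on $H(F_*)$ (Proposition~3.1.1), which yields a functor $\varphi_F:(\smod\HH)^{op}\to\ab$. The point is that $\smod\HH$ is an oriented category with additive formal group law, so the full Gysin calculus of \cite{Deg8} is available there: Gysin triangles, Gysin morphisms for projective maps, projection formulas, excess-intersection formulas. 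The corestriction (D2), the $K^M_*$-action (D3), and the residues (D4) are all defined by applying $\varphi_F$ to explicit morphisms of pro-$\HH$-modules built from this calculus (Theorem~3.2.1), and each of Rost's relations (R0)--(R3e) is checked individually against a specific property (G1a)--(G3b) or Proposition~2.3.8; there is no blanket semilinearity reduction. In particular, (R2c) and the trace formula $\varphi^*\varphi_*=[E:K]$ require a separate computation (Lemma~3.2.5) that does not follow from knowing the answer on $\uK$.

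A second omission is the role of the coniveau spectral sequence. In the paper it is not decoration: it is what promotes the cycle \emph{pre}module $\hat F_*$ to a cycle \emph{module}, by identifying the would-be Rost differentials $\partial_y^x$ with the differentials of the coniveau exact couple (Proposition~3.3.2), hence verifying (FD) and (C); and it is what produces the natural isomorphism $F_*(X)\simeq A^0(X;\hat F_*)$ needed to close the loop (Corollary~3.3.3 and Section~3.4). Your sketch does not indicate how (FD), (C), or the reconstruction isomorphism would be obtained, and ``standard compatibilities in the Gersten complex'' is not enough here, since one does not yet know that $F_*$ has a Gersten resolution before the argument is complete.
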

Actually the first part was conjectured by Morel
 (\cite[conj. 3, p. 71]{Mor1})
 and the second one is a remark we made, deduced from the first one.

Let us now come back to the opening point of this introduction:
 it turns out from this theorem that,
 for homotopy modules seen as objects of $\SH$,
 $\eta$ is precisely the obstruction to be orientable
 (see corollary \ref{cor:carac_mhtpo}
  for the precise statement).
 Moreover, the Milnor part of $\spi 0 * {S^0}$
 -- \emph{i.e.} the unramified Milnor K-theory sheaf --
 is the universal homotopy module on which $\eta$ acts as $0$.

\bigskip

This theorem relies on our previous work on homotopy modules with
 transfers where we show that they are completely determined
 by the system of their fibers over finitely generated extensions of $k$,
 which can be described precisely as a cycle module in the
 sense of M.~Rost. Actually, according to \cite[th. 3.4]{Deg9},
 this defines in fact an equivalence of categories: this gives a way
 to construct transfers on sheaves out of a more elementary algebraic
 structure on their fibers. 
 The proof of our theorem thus consists to show that the fibers of
 a homotopy module with trivial action of $\eta$ gives a cycle module;
 this was actually the original form of the conjecture of Morel.

This is done by appealing to our work on the Gysin triangle
 \cite{Deg8} in the framework of modules over ring spectra: 
 indeed, the main idea of the proof is that
 a homotopy module with trivial action of $\eta$ has a (weak)
 structure of a module over the motivic cohomology ring spectrum $\HH$.
After having recalled the central definitions involved in the formulation
 of the previous theorem in section \ref{sec:Morel_conj},
 we have dedicated section \ref{sec:preparation} to recall
 the main technical tools which will be involved in the proof:
 cycle modules, modules over ring spectra, Gysin triangles and morphisms,
 the coniveau spectral sequence and the computation of its diffetential.
 This enables us, in section \ref{sec:proof},
 to give a neat proof of the first part of our theorem though it uses
 all the previous technical tools. In the last section,
  we prove the additional fact made by the point (2) in the above
  and we collect a few additional ones which illustrate our result.
 Notably, we use the coniveau spectral sequence to obtain the following
 results:
\begin{itemize}
\item (Cor. \ref{cor:MGL_CH}) 
Let $\MGL$ be Voevodsky's algebraic cobordism spectrum.
Then for any smooth connected scheme of dimension $d$,
 the canonical map
$$
\MGL^{2d,d}(X) \rightarrow CH_0(X),
$$
with target the $0$-cycles modulo rational equivalence,
 is an isomorphism.
\item (Prop. \ref{prop:eta_cycle_class}) 
Let $\E$ be a monoid in $\SH$ satisfying the following assumptions:
\begin{enumerate}
\item[(a)] $\eta$ acts trivially on $\spi 0 * \E$.
\item[(b)] For any negative integers $n,m$, any smooth connected scheme $X$,
 and any cohomological class $\rho \in \E^{n,m}(X)$,
 there exists a non empty open $U \subset X$ such that $\rho|_U=0$.
\end{enumerate}
Then $\E$ admits an orientation whose associated formal group law
 is additive.
 Moreover, for any integer $n \geq 0$ and any smooth scheme $X$,
 there exists a canonical cycle class map
$$
CH^n(X) \rightarrow \E^{2n,n}(X)
$$
which satisfies all the usual properties.
\end{itemize}

\section*{Notations and conventions}

In this article, $k$ is a perfect field.
Any scheme is assumed to be a \emph{separated $k$-scheme}
 unless stated otherwise.
 Such a scheme will be called a smooth scheme if it is \emph{smooth of finite type}.
 We denote by $\sm$ (resp. $\smp$) the category of smooth schemes
 (resp. pointed smooth schemes).
 Given a smooth scheme $X$,
 we put $X_+=X \sqcup \spec k$ considered
 as a pointed scheme by the obvious $k$-point.

Following Morel,
 our convention for $t$-structures in triangulated categories is \emph{homological}
 -- see in particular Definition \ref{df:htp_t_structure}.

Our monoidal categories, as well as functors between them,
 are always assumed to be symmetric monoidal.
 Similarly, monoids  are always assumed to be commutative.

In diagrams involving shifts or twists of morphisms,
 we do not indicate them in the label of the arrows -- this does not
 lead to any confusion as they are indicated in the source and target
 of the arrows. 

Graded (resp. bigraded) means $\ZZ$-graded (resp. $\ZZ^2$-graded).

\section{The conjecture of Morel} \label{sec:Morel_conj}

\subsection{The homotopy $t$-structure}

\begin{num}
We will denote by $\SH$ the \emph{stable homotopy category over $k$}
 of Morel and Voevodsky (see \cite{Mor1} and \cite{Jar}).
 Objects of $\SH$ will be simply called \emph{spectra}.\footnote{
  They are called $\PP^1$-spectrum in \cite{Mor1} because they
   have to be distinguished from $S^1$-spectra. 
 As we will not use $S^1$-spectra in this work,
  there will be no risk of confusion.}
 It is a triangulated category with a canonical functor:
$$
\smp \rightarrow \SH, X \mapsto \sus X.
$$
It has a closed monoidal structure ; the tensor product 
 denoted by $\wedge$ is characterized by the property:
 $\sus (X \times_k Y)_+=\sus X_+ \wedge \sus Y_+$.
 We denote by $S^0=\sus \spec k_+$ the unit of this tensor product.
By construction of $\SH$, the object $\sus \GG$ 
 is invertible for the tensor product.
For any integer $n \in \ZZ$, we will denote by
$S^{n,n}$ its $n$-th tensor power. Moreover, for any couple 
 $(i,n) \in \ZZ^2$, we put $S^{i,n}=S^{n,n}[i-n]$.
\end{num}

\begin{num}
Consider a spectrum $\E$.
For any smooth $k$-scheme $X$ and any couple $(i,n) \in \ZZ^2$,
 we put $\E^{i,n}(X)=\Hom_{\SH}(\sus X_+,S^{i,n} \wedge \E)$.
 This defines a bigraded cohomology theory on $\sm$.
We let $\spi i n \E$ be the Nisnevich sheaf of abelian groups
 on $\sm$ associated with the presheaf:
$$
X \mapsto \E^{n-i,n}(X).
$$
For a fixed integer $i \in \ZZ$, $\spi i * \E$ will be considered
 as an abelian $\ZZ$-graded sheaf.
 Recall the definition 5.2.1 of \cite{Mor1}:
\end{num}
\begin{df} \label{df:htp_t_structure}
A spectrum $\E$ will be said to be \emph{non negative}
 (resp. \emph{non positive}) if for any $i<0$ (resp. $i>0$)
 $\spi i * \E=0$.
We let $\SH_{\geq 0}$ (resp. $\SH_{\leq 0}$) be the full
 subcategory of $\SH$ made of non negative (resp. nonpositive)
 objects of $\SH$.
\end{df}
As proved in \cite[5.2.3]{Mor1}, this defines a t-structure
 on the triangulated category $\SH$ with \emph{homological conventions}.
 This means the following properties:
\begin{enumerate}
\item The inclusion functor $o_+:\SH_{\geq 0} \rightarrow \SH$
 (resp. $o_-:\SH_{\leq 0} \rightarrow \SH$)
 admits a right adjoint $t_+$ (resp. left adjoint $t_-$).
 For any spectrum $\E$ and any integer $i \in \ZZ$,
  we put:
$$
\begin{array}{ll}
E_{\geq 0}=o_+t_+(\E), \quad & \E_{\leq 0}=o_-t_-(\E), \\
E_{\geq i}=(E[-i])_{\geq 0}[i], & E_{\leq i}=(E[-i])_{\leq 0}[i], \\
E_{>i}=E_{\geq i+1}, & E_{<i}=(E)_{\leq i-1}. 
\end{array}
$$
\item For any spectra $\E,\F$,
\begin{equation}
\Hom_{\SH}(\E_{>0},\F_{\leq 0})=0.
\end{equation}
\item
For any object $\E$ in $\SH$, there is a unique
 distinguished triangle in $\SH$:
\begin{equation}
\E_{\geq 0} \rightarrow \E \rightarrow \E_{<0} \rightarrow \E_{\geq0}[1]
\end{equation}
where the first two maps are given by the adjunctions of point (1).
\end{enumerate}

\begin{rem} \label{rem:generators_positive}
The key point for the previous results is
 the stable $\AA^1$-connectivity theorem of Morel (see  \cite[th. 4.2.10]{Mor1}).
 Recall this theorem also implies that for any smooth $k$-scheme $X$
 and any pair $(i,n) \in \NN \times \ZZ$,
 the spectrum $S^{i+n,n} \wedge \sus X_+$ is non negative.
 Then, according to the previous definition,
  the spectra of this shape constitute an essentially small
 family of generators for the localizing subcategory $\SH_{\geq 0}$ of $\SH$.
\end{rem}

\subsection{Homotopy modules}

\begin{num} \label{num:minus_one}
Given an abelian Nisnevich sheaf $F$ on $\sm$, we denote by 
$F_{-1}(X)$ the kernel of the morphism $F(X \times_k \GG) \rightarrow F(X)$
induced by the unit section of $\GG$.
Following the terminology of Morel, 
 we will say that $F$ is \emph{strictly homotopy invariant}
 if the Nisnevich cohomology presheaf $H^*_\nis(-,F)$ is homotopy invariant.
\end{num}
\begin{df}[Morel] \label{df:htp_modl}
A \emph{homotopy module} is a pair $(F_*,\epsilon_*)$ where $F_*$ is a 
$\ZZ$-graded abelian Nisnevich sheaf on $\sm$ which is strictly homotopy invariant
and $\epsilon_n:F_n \rightarrow (F_{n+1})_{-1}$ is an isomorphism.
A morphism of homotopy modules is
 a homogeneous natural transformation of $\ZZ$-graded sheaves
 compatible with the given isomorphisms.

We denote by $\hmod$ the category of homotopy modules.
\end{df}

\begin{num}
For any spectrum $\E$, $\spi 0 * \E$ has a canonical structure
 of a homotopy module. Moreover, the functor $\E \mapsto \spi 0 * \E$
induces an equivalence of categories
between the heart of $\SH$ for the homotopy $t$-structure
and the category $\hmod$ (see \cite[5.2.6]{Mor1}). 
As in \emph{loc. cit.} we will denote its quasi-inverse by:
\begin{equation} \label{eq:hmod_SH}
H:\hmod \rightarrow \pi_0(\SH).
\end{equation}

Note this result implies that $\hmod$ is a Grothendieck abelian category
 with generators of shape 
\begin{equation} \label{eq:generators_hmod}
\spi 0 * X\{n\}=\spi 0 * {S^{n,n} \wedge \sus X_+}.
\end{equation}
for a smooth $k$-scheme and an integer $n \in \ZZ$.
It admits a monoidal structure defined by
\begin{equation} \label{eq:hmod_tensor}
F_* \otimes G_*:=\pi_0(H(F_*) \wedge H(G_*))_* \simeq (H(F_*) \wedge H(G_*))_{\geq 0}.
\end{equation}
The isomorphism follows from the fact the tensor product
on $\SH$ preserves non negative spectra
 (according to remark \ref{rem:generators_positive}).

Note that $\spi 0 * {S^0}$ is the unit object for this monoidal structure.
Given a homotopy module $F_*=(F_*,\epsilon_*)$
 and an integer $n \in \ZZ$,
 we will denote by $F_*\{n\}$ the homotopy module whose $i$-th graded
 term is $(F_{i+n},\epsilon_{i+n})$.
 Then:
\begin{equation}
F_*\{n\}=\spi 0 * {\GG^{\wedge,n}} \otimes F_*
\end{equation}
so that this notation agrees with that of \eqref{eq:generators_hmod}.
\end{num}

\begin{rem}
Let $F_*$ be a homotopy module.
Then according to the construction of the spectrum $\F:=H(F_*)$,
 for any couple $(i,n) \in \ZZ^2$, we get an isomorphism:
\begin{equation} \label{eq:coh_hmod}
\F^{i,n}(X) \simeq H^{i-n}_\nis(X,F_n),
\end{equation}
natural in $X$.
\end{rem}

\begin{ex} \label{ex:unramified_Milnor&HH}
Let $\HH$ be the spectrum representing
 motivic cohomology.\footnote{
 See \cite{V2} or example \ref{ex:strict_oriented_ring}.}
  Then the homotopy module $\spi 0 * \HH$ is the sheaf
 of unramified Milnor K-theory $\uK_*$ on $\sm$.\footnote{
 This follows from two arguments: the identification of the unstable
  motivic cohomology of bidegree $(n,n)$ of a field $E$ with 
  the $n$-th Milnor ring $K_n^M(E)$; the cancellation theorem of
  Voevodsky to identify unstable motivic cohomology
  with the stable one.}
\end{ex}

\begin{num} \label{num:Hopf}
Define the \emph{Hopf map} in $\SH$ as the morphism $\eta:\GG \rightarrow S^0$
 obtained by applying the tensor product with $S^{-2,-1}$ to the map induced by
$$
\AA^2_k-\{0\} \rightarrow \PP^1_k, (x,y) \mapsto [x,y].
$$
According to \cite[6.2.4]{Mor1},\footnote{See \cite{Mor2} for a proof (stated as Corollary 21
 in the introduction, which can be deduced from the results of section 2.3).}
 there exists a canonical exact sequence in $\hmod$ of shape:
\begin{equation} \label{eq:sec_Morel}
\spi 0 * \GG
 \xrightarrow{\eta_*} \spi 0 * {S^0}
 \rightarrow \uK_* \rightarrow 0.
\end{equation}
\end{num}

\begin{df} \label{df:hmor}
A homotopy module $F_*$ is said to be \emph{orientable}
 if the map induced by $\eta$:
$$
1 \otimes \eta_*:F_*\{1\} \rightarrow F_*
$$
is zero. 
We will denote by $\hmor$ the full subcategory
of $\hmod$ made by the orientable homotopy modules.
\end{df}
Note that $\eta$ is in fact an element of $\pi_0(S^0)_{-1}(k)$.  
Given a homotopy module $F_*$, a smooth scheme $X$ and an integer $n \geq 0$, 
 $H^n_\nis(X,F_*)$ has a canonical structure of
 a graded  $\pi_0(S^0)_*(k)$-module.
Then the following conditions are equivalent:
\begin{enumerate}
 \item[(i)] $F_*$ is orientable.
\item[(ii)] For any smooth scheme $X$ and any integer $n \geq 0$,
 the action of $\eta$ on $H^n_\nis(X,F_*)$ is zero.
\end{enumerate}

\rem \label{rem:mhor&P^1}
\begin{enumerate}
\item Consider a linear embedding
 $i:\PP^1 \rightarrow \PP^2$. 
 Then from \cite[6.2.1]{Mor1},
 the sequence
$$
\sus(\AA^2-\{0\})
 \xrightarrow{\ S^{2,1} \wedge \eta \ } \sus \PP^1
 \xrightarrow{\ i_* \ } \sus \PP^2
$$
is homotopy exact in $\SH$.
In particular, 
 one sees that a homotopy module $F_*$ is orientable if
 $i^*:H^*_\nis(\PP^2,F_*) \rightarrow H^*_\nis(\PP^1,F_*)$
 is split -- actually the reciprocal statement holds
 (see Corollary \ref{cor:carac_mhtpo}).
\item Recall that modulo $2$-torsion,
 if $-1$ is a sum of squares in $k$,
 any homotopy module is orientable (see \cite[6.3.5]{Mor1}).
\end{enumerate}

\subsection{Homotopy modules with transfers}

\begin{num}
Recall Voevodsky has introduced the category $\smc$
 whose objects are the smooth schemes 
 and morphisms are the \emph{finite correspondences}.
 Taking the graph of a morphism of smooth schemes induces a functor
 $\gamma:\sm \rightarrow \smc$.
A Nisnevich sheaf with transfers is an abelian presheaf $F$
 on $\smc$ such that $F \circ \gamma$ is a Nisnevich sheaf.
 Note that the construction $F_{-1}$ of paragraph \ref{num:minus_one}
 applied to a sheaf with transfers $F$ gives in fact a sheaf with transfers,
 still denoted by $F_{-1}$.
 In \cite[1.15]{Deg9}, we have introduced the following definition:
\end{num}
\begin{df} \label{df:hmtr}
A \emph{homotopy module with transfers} is
 a pair $(F_*,\epsilon_*)$ where $F_*$ is a 
 $\ZZ$-graded abelian Nisnevich sheaf with transfers
 which is strictly homotopy invariant
 and $\epsilon_n:F_n \rightarrow (F_{n+1})_{-1}$ is an isomorphism
 of sheaves with transfers.
A morphism of homotopy modules with transfers is
 a homogeneous natural transformation of $\ZZ$-graded sheaves
 with transfers compatible with the given isomorphisms.

We denote by $\hmtr$ the category of homotopy modules with transfers.
\end{df}

\begin{num}
Let $(F_*,\epsilon_*)$ be a homotopy module with transfers.
Obviously, the functor $\gamma_*(F_*)=F_* \circ \gamma$
together with the natural isomorphism $\epsilon_*.\gamma$
is a homotopy module. \\
Recall from \cite[1.3]{Deg9} that one can attach to $F_*$
 a cohomological functor:
$$
\varphi:\DMgm^{op} \rightarrow \ab
$$
from Voevodsky's category of geometric motives
to the category of abelian groups such that: 
$$
\varphi(M(X)(n)[n+i])=H^i_\nis(X,F_{-n}).
$$
According to remark \ref{rem:mhor&P^1},
 the projective bundle theorem in $\DMgm$ implies
 that $\gamma_*(F_*)$ is orientable.
Thus we have obtained a canonical functor:
\begin{equation} \label{eq:hmtr->hmor}
\gamma'_*:\hmtr \rightarrow \hmor.
\end{equation}
One can see that $\gamma'_*$ is faithful.
The main point of this note is the following theorem:
\end{num}
\begin{thm} \label{thm:conj_morel1}
The functor $\gamma'_*$ introduced above is
 an equivalence of categories.
\end{thm}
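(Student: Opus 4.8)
The plan is to reduce the statement to M.~Rost's cycle modules via the equivalence $\Phi\colon\hmtr\xrightarrow{\ \sim\ }\modl$ of \cite[th.~3.4]{Deg9}, and to build, functorially in an orientable homotopy module $F_*$, a cycle module structure $\Psi(F_*)$ on the system of its fibres $F_*(E)$ over finitely generated extensions $E/k$. Granting this, it suffices to produce natural isomorphisms (a) $\Psi\circ\gamma'_*\cong\Phi$ as functors $\hmtr\to\modl$, and (b) $\gamma'_*\circ\Phi^{-1}\circ\Psi\cong\mathrm{id}_{\hmor}$; then $\Phi^{-1}\circ\Psi$ is a quasi-inverse of $\gamma'_*$, which in particular recovers and refines the known faithfulness. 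Isomorphism (b) rests on the facts that the underlying homotopy module of $\Phi^{-1}(M)$ has the same fibres over function fields as $M$ and that a strictly homotopy invariant sheaf over the perfect field $k$ is determined by those fibres; isomorphism (a) amounts to checking that the transfers of a homotopy module with transfers agree with the Gysin maps used to build $\Psi$.

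The construction of $\Psi$ exploits the hypothesis $\eta=0$. By Morel's exact sequence \eqref{eq:sec_Morel}, the action of the unit $\spi 0 * {S^0}$ on $F_*$ factors through $\uK_*=\spi 0 * \HH$, so $F_*$ becomes a module over $\uK_*$ in the monoidal category $\hmod$. Passing through the functor $H$ of \eqref{eq:hmod_SH}, the spectrum $\F=H(F_*)$ acquires a \emph{weak} module structure over the oriented ring spectrum $\HH$ (example \ref{ex:strict_oriented_ring}). This is exactly the input needed for the Gysin formalism of \cite{Deg8} for modules over ring spectra: the cohomology theory $X\mapsto\F^{*,*}(X)\simeq H^{*-*}_\nis(X,F_*)$ becomes oriented, and one obtains Gysin morphisms for closed immersions of smooth schemes, purity isomorphisms, projection formulas and residue maps.

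With the Gysin formalism available, I would run the coniveau spectral sequence of a smooth scheme $X$ with coefficients in $F_*$. Its $E_1$-page is assembled from the (twisted) fibres $F_*(\kappa(x))$ for $x\in X$, and — this is where the computation of the $d_1$-differential recalled in section \ref{sec:preparation} intervenes — the differential is given by the residue maps of the Gysin formalism. Reinterpreting Rost's axioms in these terms, I would then verify that the data $\big(E\mapsto F_*(E)$, restrictions, corestrictions coming from the Gysin maps along finite extensions, the $\uK_*$-action, residues$\big)$ satisfies the defining axioms of a cycle module, producing $\Psi(F_*)$. Applying $\Phi^{-1}$ yields a homotopy module with transfers lifting $F_*$, which gives essential surjectivity; since every piece of structure here is canonical and natural in $F_*$, $\Psi$ is a functor, and pushing a morphism $\psi\colon\gamma'_*F_*\to\gamma'_*G_*$ through $\Psi$ and then $\Phi^{-1}$ produces a preimage in $\hmtr$, giving fullness.

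The hard part is two-fold. First, upgrading the $\uK_*$-module structure on $F_*$ to enough of a (weak) $\HH$-module structure on $\F$: strict associativity and unitality are not available in $\SH$, so one must set up precisely the amount of coherence that the Gysin machinery of \cite{Deg8} demands — this is the very reason for working with modules over ring spectra rather than honest oriented ring spectra. Second, and most delicate, is matching the $d_1$-differential of the coniveau spectral sequence with Rost's residue axioms on the nose: the homotopy invariance, additivity, and above all the reciprocity-type relations among iterated residues must be extracted from the geometry of the Gysin morphisms, and this verification is the technical heart of section \ref{sec:proof}.
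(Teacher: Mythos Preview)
Your strategy matches the paper's: reduce to Rost's cycle modules via the equivalence $\hmtr\simeq\modl$, use Morel's exact sequence \eqref{eq:sec_Morel} to endow $H(F_*)$ with a weak $\HH$-module structure, extract a cycle premodule from the fibres via the Gysin formalism of \cite{Deg8}, upgrade to a cycle module using the coniveau spectral sequence, and finally check the natural isomorphism $1\cong\gamma'_*\circ A^0\circ\tilde\rho$.

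Two points of execution differ from what you describe, and getting them right matters for the write-up. First, there is no coherence problem to solve: the Gysin morphisms are honest arrows in the triangulated category $\smod\HH$ of \emph{strict} $\HH$-modules, and the weak $\HH$-module structure on $H(F_*)$ is used only to produce a functor $\varphi_F:(\smod\HH)^{op}\to\ab$ (see \eqref{eq:hmod&smod_HH} and diagram \eqref{eq:hmod&smod_HH_ppty}) through which one reads off the data; this completely sidesteps the associativity issues you anticipate. Second, the division of labour is not quite as you sketch: the cycle \emph{pre}module axioms (R0)--(R3e) are verified directly from the properties (G1)--(G3) of Gysin morphisms in $\smod\HH$ (this is Theorem \ref{thm:generic_smod}, with the bookkeeping tabulated in \ref{num:relations}), whereas the coniveau spectral sequence enters only afterwards, and solely to establish (FD) and (C), i.e.\ to pass from premodule to module (Corollary \ref{cor:comput_coniv_ssp1}). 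Your remark that one also needs the compatibility $\Psi\circ\gamma'_*\cong\Phi$ is well taken; the paper treats this as implicit in declaring Theorems \ref{thm:conj_morel1} and \ref{thm:conj_morel2} equivalent.
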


\begin{num}
This theorem is an equivalent form of the conjecture
 \cite[conj. 3, p. 160]{Mor1} of Morel.
 Indeed, recall the main theorem of \cite{Deg9}
 establishes an equivalence of categories between
 the category of homotopy modules with transfers
 and the category $\modl$ of cycle modules defined by M.~Rost
 in \cite{Ros}.
Indeed, from \cite[3.3]{Deg9}, we get quasi-inverse equivalences of
categories:
\begin{equation}\label{eq:deglise}
\rho:\hmtr \leftrightarrows \modl:A^0.
\end{equation}
In fact, we will prove the previous theorem by proving the following
 equivalent form:
\end{num}
\begin{thm} \label{thm:conj_morel2}
There exists a canonical functor
$$
\tilde \rho:\hmor \rightarrow \modl
$$
and a natural isomorphism of endofunctors of $\hmor$:
$$
\epsilon:1 \rightarrow (\gamma'_* \circ A^0 \circ \tilde \rho).
$$
\end{thm}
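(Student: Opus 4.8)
The plan is to construct the functor $\tilde\rho:\hmor\to\modl$ by sending an orientable homotopy module $F_*$ to the system of its fibers over finitely generated field extensions $E/k$, together with the four data (D1)--(D4) of Rost's axioms: restriction, corestriction (norm), multiplication by units of the residue field via the $\GG$-structure $\epsilon_*$, and the residue (boundary) maps coming from discrete valuations. Since $F_*$ is a strictly homotopy invariant sheaf, its fibers $\hat F_*(E)=\varinjlim_{U} F_*(U)$ over models $U$ of $E$ are well defined; the isomorphisms $\epsilon_*$ furnish the $\GG$-module structure that Rost's axiom requires, and the residue maps arise from the localization/Gysin sequence for a smooth model of a valuation ring, exactly as in the construction of the coniveau spectral sequence recalled in section \ref{sec:preparation}. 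The first task is therefore to verify that these data satisfy Rost's compatibility rules (FD) and (C); here the hypothesis that $\eta$ acts as $0$ on $F_*$ is what makes the \emph{norm} and the product well behaved --- on the nose this is the passage from a ``Witt-like'' structure to a genuinely additive one, and it is the essential point where orientability enters rather than being a formal consequence.

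The heart of the proof, and what I expect to be the main obstacle, is showing that the residue maps built from the Gysin triangle are themselves \emph{correctly normalized and additive}, i.e. that the differential in the coniveau spectral sequence of $F_*$ really is (up to the identifications) the Rost differential of a cycle module. The strategy here is the one advertised in the introduction: exhibit on $H(F_*)$ a weak module structure over the motivic cohomology ring spectrum $\HH$ --- possible precisely because $\eta$ kills $F_*$ and $\uK_*$ is, by the exact sequence \eqref{eq:sec_Morel}, the universal homotopy module with trivial $\eta$-action --- and then invoke the computation of the coniveau differential for $\HH$-modules from \cite{Deg8}. Concretely I would factor the unit map through the cofiber of $\eta_*$ to get a map $\uK_*\otimes F_*\to F_*$, lift it to $\SH$, and check it gives a (non-unital, ``weak'') action; the compatibility of the Gysin morphisms with this action is what transports Rost's reciprocity and the Leibniz rule to $F_*$. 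The delicate bookkeeping is making the weak module structure enough to pin down the differential without having a strict module.

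With $\tilde\rho$ in hand, the natural transformation $\epsilon:1\to\gamma'_*\circ A^0\circ\tilde\rho$ is constructed as follows. For an orientable homotopy module $F_*$, the cycle module $\tilde\rho(F_*)$ has fibers agreeing with those of $F_*$ by construction; applying $A^0$ (Rost's ``$0$-th cycle homology'', which reconstructs a homotopy module with transfers out of a cycle module) produces a homotopy module with transfers whose underlying sheaf receives a canonical comparison map from $F_*$ --- indeed both $F_*(X)$ and $A^0(\tilde\rho(F_*))(X)$ sit as the kernel of the first coniveau differential over $X$, and the functoriality of the coniveau complex gives the map $\epsilon_{F_*}:F_*\to\gamma'_*A^0\tilde\rho(F_*)$. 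To see $\epsilon_{F_*}$ is an isomorphism one uses that a strictly homotopy invariant sheaf is recovered from its coniveau complex (its ``Gersten resolution''), together with the fact that $\tilde\rho(F_*)$ faithfully records that complex fibrewise; comparing term by term over all $X$ and using that sections of a Nisnevich sheaf are detected on local rings, hence on the fibers, forces bijectivity. Naturality in $F_*$ is then formal from the naturality of the coniveau filtration.

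Finally I would note how this yields Theorems \ref{thm:conj_morel1} and \ref{thm:conj_morel2} together: $\epsilon$ exhibits $\gamma'_*\circ(A^0\circ\tilde\rho)$ as isomorphic to the identity on $\hmor$, so $\gamma'_*$ is essentially surjective and admits $A^0\circ\tilde\rho$ as a section; combined with the already-noted faithfulness of $\gamma'_*$ from \eqref{eq:hmtr->hmor} and the equivalence $\rho:\hmtr\leftrightarrows\modl:A^0$ of \eqref{eq:deglise}, one deduces $\gamma'_*$ is fully faithful and hence an equivalence. The only genuinely hard input is the $\HH$-module argument for the coniveau differential; everything else is an assembly of \cite{Deg9}, \cite{Deg8}, and Morel's exact sequence \eqref{eq:sec_Morel}.
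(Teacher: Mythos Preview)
Your proposal follows essentially the same architecture as the paper: obtain a weak $\HH$-module structure on $H(F_*)$ from the vanishing of $\eta$ via the exact sequence \eqref{eq:sec_Morel}; use the Gysin formalism for $\HH$-modules from \cite{Deg8} to build the data (D1)--(D4) (in the paper this is packaged as a functor $\uHH^{(0)}:\ecorps^{op}\to\pro{(\smod\HH)}$, Theorem~\ref{thm:generic_smod}, and then pushed through $\varphi_F$); and verify \pFD X, \pC X by identifying the coniveau $E_1$-line with Rost's complex (Proposition~\ref{compute_differentials} and Corollary~\ref{cor:comput_coniv_ssp1}). So far so good.

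There is, however, one step you treat as formal that is not. You write that ``the functoriality of the coniveau complex gives the map $\epsilon_{F_*}$'' and that ``naturality is then formal from the naturality of the coniveau filtration.'' But the coniveau complex $C^*(X;\hat F_*)$ is \emph{not} functorial for arbitrary morphisms of smooth schemes, only for flat ones; Rost's pullback on $A^0$ for a closed immersion is defined by a specialization formula $\partial_v\circ\gamma_\pi$ (\cite[(12.4)]{Ros}), not by restricting the complex termwise. To know that $\epsilon_X:F_*(X)\to A^0(X;\hat F_*)$ is a morphism of \emph{presheaves} on $\sm$ --- and hence a morphism of homotopy modules --- you must check that the native pullback $i^*$ on $F_*$ agrees, through $\epsilon$, with Rost's pullback on $A^0$ for a closed immersion $i$. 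The paper devotes an entire subsection (``The remaining isomorphism'') to this: one factors an arbitrary $f$ as a graph followed by a projection, handles flat maps directly, reduces the closed-immersion case to a smooth principal divisor, and then invokes Proposition~\ref{prop:residues_units}(2) to match $i^*$ with $\partial_v\circ\gamma_\pi$. This is precisely the kind of compatibility between the Gysin data you built and the sheaf-theoretic restriction that your ``weak module'' argument does not supply for free. Once you fill this in, your outline is the paper's proof.
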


\section{Preparations} \label{sec:preparation}

\subsection{The theory of cycle modules}

We shortly recall the theory of cycle modules by M.~Rost
 (see \cite{Ros})
 in a way that will facilitate the proof of our main result.

\subsubsection{Cycle premodules}

\begin{num} \label{num:axioms_pre_modl}
A \emph{function field} 
 will be an extension field $E$ of $k$ of finite transcendental degree.
 A \emph{valued function field} $(E,v)$ will be a function field $E$
 together with a discrete valuation $v:E^\times \rightarrow \ZZ$
 whose ring of integers $\cO_v$ is essentially of finite type over $k$.
 In this latter situation, we will denote by $\kappa(v)$ the residue
 field of $\cO_v$.

Given a function field $E$, we will denote by $K_*^M(E)$ the
 Milnor K-theory of $E$.
 Expanding a remark of Rost (\cite[(1.10)]{Ros}),
 we introduce the additive category $\ecorps$ as follows:
  its objects are the couples $(E,n)$ where $E$
 is a function field and $n \in \ZZ$ an integer;
 the morphisms are defined by the following generators and relations:

\noindent \underline{Generators}~:
\begin{itemize}
\item[\textbf{(D1)}] $\varphi_*:(E,n) \rightarrow (L,n)$
 for an extension field $\varphi:E \rightarrow L$, $n \in \ZZ$.
\item[\textbf{(D2)}] $\varphi^*:(L,n) \rightarrow (E,n)$
 for a finite extension fields $\varphi:E \rightarrow L$,
  $n \in \ZZ$.
\item[\textbf{(D3)}] $\gamma_x:(E,n) \rightarrow (E,n+r)$, for
  any $x \in K_r^M(E)$, $n \in \ZZ$.
\item[\textbf{(D4)}] $\partial_v:(E,n) \rightarrow (\kappa(v),n-1)$,
 for any valued function field $(E,v)$, $n \in \ZZ$.
\end{itemize}

\noindent \underline{Relations}~:
\begin{itemize}
\item[\textbf{(R0)}] For all $x \in K_*^M(E)$, $y \in K_*^M(E)$,
$\gamma_x \circ \gamma_y=\gamma_{x.y}$.
\item[\textbf{(R1a)}] $(\psi \circ \varphi)_*=\psi_* \circ \varphi_*$.
\item[\textbf{(R1b)}] $(\psi \circ \varphi)^*=\varphi^* \circ \psi^*$.
\item[\textbf{(R1c)}] Consider finite extension fields
 $\varphi:K \rightarrow E$, $\psi:K \rightarrow L$
 and put $R=E \otimes_K L$.
 For any point $z \in \spec R$,
 let
  $\bar \varphi_z:L \rightarrow R/z$
  and $\bar \psi_z:E \rightarrow R/z$ be the induced morphisms.
  Under these notations, the relation (R1c) is:

$\psi_*\varphi^*=\sum_{z \in \spec{R}}
\mathrm{lg}\big(R_z\big).(\bar \varphi_z)^*(\bar \psi_z)_*$,

\noindent where $\mathrm{lg}$ stands for the length of an Artinian ring.
\item[\textbf{(R2a)}] For an extension fields $\varphi:E \rightarrow L$,
 $x \in K_*^M(E)$,
 $\varphi_* \circ \gamma_x=\gamma_{\varphi_*(x)} \circ
\varphi_*$.
\item[\textbf{(R2b)}] For a finite extension fields $\varphi:E \rightarrow L$,
 $x \in K_*^M(E)$,
$\varphi^* \circ \gamma_{\varphi_*(x)}=
\gamma_x \circ \varphi^*$.
\item[\textbf{(R2c)}] For a finite extension fields $\varphi:E \rightarrow L$,
 $y \in K_*^M(L)$,
$\varphi^* \circ \gamma_y \circ \varphi_*=
\gamma_{\varphi^*(y)}$.
\item[\textbf{(R3a)}] Let $\varphi:E \rightarrow L$ be a morphism,
 where $(L,v)$ and $(E,w)$ are valued function fields
 such that $v|_{E^\times}=e.w$ for a positive integer $e$. Let
 $\bar\varphi:\kappa(w) \rightarrow \kappa(v)$ be the induced morphism.
 Under these notations, relation (R3a) is:
$\partial_v \circ \varphi_* =e.{\bar \varphi}_* \circ \partial_w$.
\item[\textbf{(R3b)}] Let $\varphi:E \rightarrow L$ be a finite extension fields
 where $(E,v)$ is a valued function field. For any valuation $w$ on $L$,
 we let $\bar \varphi_w:\kappa(v) \rightarrow \kappa(w)$ be the induced morphism.
 Then relation (R3b) is:
$\partial_v \circ \varphi^*=\sum_{w/v} \bar \varphi_w^* \circ \partial_w$.
\item[\textbf{(R3c)}]For any $\varphi:E \rightarrow L$, 
 any valuation $v$ on $L$, trivial on $E^\times$: $\partial_v \circ \varphi_*=0$.
\item[\textbf{(R3d)}] For a valued function field $(E,v)$, a prime $\pi$ of $v$~:
 $\partial_v \circ \gamma_{\{-\pi\}} \circ \varphi_* ={\bar \varphi}_*$.
\item[\textbf{(R3e)}] For a valued field $(E,v)$ and a unit $u \in E^\times$~:
$\partial_v \circ \gamma_{\{u\}}
=-\gamma_{\{\bar u\}} \circ \partial_v$.
\end{itemize}
Recall the following definition -- \cite[(1.1)]{Ros}:
\end{num}
\begin{df}
A cycle premodule is an additive covariant functor
$M:\ecorps \rightarrow \ab$.
We denote by $\pmodl$ the category of such functors
 with natural transformations as morphisms. 
\end{df}

\subsubsection{Cycle modules}

\begin{num}
Consider a cycle premodule $M$,
 a scheme $X$ essentially of finite type over $k$,
 and an integer $p \in \ZZ$. 
 According to \emph{loc. cit.}, \textsection 5, we set:
\begin{equation}
C^p(X;M)=\bigoplus_{x \in X^{p}} M(\kappa_x)
\end{equation}
where $\kappa_x$ denotes the residue field of $x$ in $X$.
This is a graded abelian group and we put: $C^p(X;M)_n
 =\bigoplus_{x \in X^{p}} M_{n-p}(\kappa_x)$.\footnote{
 It is denoted by $C^p(X;M,n)$ in \emph{loc. cit.}}

Consider a couple $(x,y) \in X^{(p)} \times X^{(p+1)}$.
Assume that $y$ is a specialization of $x$.
Let $Z$ be the reduced closure of $x$ in $X$
 and $\tilde Z \xrightarrow f Z$ be its normalization. 
Each point $t \in f^{-1}(y)$ corresponds to a
discrete valuation $v_t$ on $\kappa_x$ with residue 
field $\kappa_t$.
We denote by $\varphi_t:\kappa_y \rightarrow \kappa_t$ the 
morphism induced by $f$. Then,
 we define according to \cite[(2.1.0)]{Ros} the following 
 homogeneous morphism of graded abelian groups:
\begin{equation} \label{eq:residues_pt_modl}
\partial_y^x=
\sum_{t \in f^{-1}(y)}
 \varphi_t^* \circ \partial_{v_t}:
 C^p(X;M)_n \rightarrow C^{p+1}(X;M)_n
\end{equation}
If $y$ is not a specialization of $x$,
  we put: $\partial_y^x=0$.
\end{num}
\begin{df}
Consider the hypothesis and notations above.
We introduce the following property of the cycle premodule $M$:
\begin{itemize}
\item[\pFD X] For any $x \in X$ and any $\rho \in M(\kappa_x)$,
 $\partial_y^x(\rho)=0$ for all but finitely many $y \in X$.
\end{itemize}
Assume \pFD X is satisfied. Then for any integer $p$,
 we define according to \cite[(3.2)]{Ros} the following morphism:
\begin{equation} \label{eq:diff_modl}
d^p_X=\sum_{(x,y) \in X^{(p)} \times X^{(p+1)}} \partial_y^x:
 C^p(X;M)_n \rightarrow C^{p+1}(X;M)_n,
\end{equation}
and introduce the following further property of $M$:
\begin{itemize}
\item[\pC X] For any integer $p \geq 0$, $d^{p+1}_X \circ d^p_X=0$.
\end{itemize}
\end{df}
\begin{num}
Thus, under \pFD X and \pC X,
 the map $d^*_X$ is a differential on $C^*(X;M)$
 and we get the complex of cycles in $X$ with coefficients in $M$
 defined by Rost in \emph{loc. cit.} In fact, the complex $C^*(X;M)$
 is graded according to \eqref{eq:diff_modl}.
\end{num}

In the conditions of the above definition,
 the following properties are clear from the definition:
\begin{itemize}
\item Let $Y$ be a closed subscheme (resp. open subscheme, localized scheme) of $X$.
Then \pFD X implies \pFD Y and \pC X implies \pC Y.
\item Let $X=\cup_{i \in I} U_i$ be an open cover of $X$.
Then \pFD X is equivalent to \pFD {U_i} for any $i \in I$.
\end{itemize}
As any affine algebraic $k$-scheme $X$ can be embedded into $\AA^n_k$
 for $n$ sufficiently large,
 we deduce easily from these facts the following lemma:
\begin{lm} \label{lm:reduce_modl}
Let $M$ be a cycle premodule.
Then the following conditions are equivalent:
\begin{enumerate}
\item[(i)] $M$ satisfies \pFD X for any $X$.
\item[(ii)] $M$ satisfies \pFD{\AA^n} for any integer $n \geq 0$.
\end{enumerate}
Assume these equivalent conditions are satisfied. Then,
 the following conditions are equivalent:
\begin{enumerate}
\item[(iii)] $M$ satisfies \pC X for any $X$.
\item[(iv)] $M$ satisfies \pC{\AA^n} for any integer $n \geq 0$.
\end{enumerate}
\end{lm}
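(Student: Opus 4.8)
The plan is to leverage the two bullet points stated just before the lemma: the localization/restriction stability of \pFD{} and \pC{}, and the open-cover criterion for \pFD{}. The implications $(i)\Rightarrow(ii)$ and $(iii)\Rightarrow(iv)$ are trivial, since $\AA^n_k$ is one particular choice of $X$. So the content is entirely in the reverse directions.

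First I would handle $(ii)\Rightarrow(i)$. Let $X$ be an arbitrary scheme essentially of finite type over $k$. By the open-cover bullet, \pFD X holds as soon as \pFD{U} holds for every member $U$ of some affine open cover; such a cover exists (by quasi-compactness of the pieces, if one insists, but really we just need to reduce to the affine case), so it suffices to treat $X$ affine. An affine algebraic $k$-scheme admits a closed immersion $X \hookrightarrow \AA^n_k$ for $n$ large; since \pFD{\AA^n_k} is assumed and \pFD{} descends to closed subschemes by the first bullet, we get \pFD X. For a scheme merely essentially of finite type — i.e.\ a localization of an algebraic one — the first bullet again applies (\pFD{} passes to localized schemes). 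This gives $(i)$.

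Next, assuming the equivalent conditions $(i)$/$(ii)$, I would prove $(iv)\Rightarrow(iii)$ in the same spirit. The subtlety is that \pC{} — unlike \pFD{} — is not obviously local on $X$ for a general open cover, so I would not try to cover-and-glue; instead I would go directly through a closed immersion. Given $X$ essentially of finite type, reduce first to $X$ of finite type and affine (here one does need to be a little careful: the differential $d^*_X$ on a localization $U = X_{(S)}$ is the one induced from $X$ by restricting along the flat map, and since $\partial$'s are compatible with such restrictions, $d^2=0$ on $X$ forces $d^2=0$ on $U$; likewise for a standard open, whence for any open, and finally — using that an arbitrary $X$ is covered by affines and $C^*(X;M)=\bigoplus$ over points splits compatibly — for affine $X$). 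Then embed $X \hookrightarrow \AA^n_k$ as a closed subscheme; by the first bullet, \pC{\AA^n_k} implies \pC X, and $(iv)$ gives \pC{\AA^n_k}. Hence $(iii)$.

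The main obstacle is not any single deep step but rather being honest about the reduction from "essentially of finite type" and from arbitrary $X$ down to "affine of finite type" for the property \pC{}: one must check that the complex $C^*(X;M)$ and its differential genuinely restrict along open immersions and localizations in a way compatible with direct-sum decompositions over points, so that vanishing of $d^2$ propagates in the right direction. This is exactly the content of the two bullet points preceding the lemma, so once those are granted the argument is a short diagram-free chase; I would spend the writeup making the closed-immersion step explicit and citing the bullets for the rest.
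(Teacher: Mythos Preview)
Your proposal is correct and follows exactly the approach the paper indicates: the paper's entire ``proof'' is the sentence preceding the lemma, which invokes the two bullet points together with the embedding of affine algebraic $k$-schemes into $\AA^n_k$, and your write-up simply unpacks this. Your extra care in the $(iv)\Rightarrow(iii)$ step---noting that open-cover locality is stated only for \pFD{} and then recovering the reduction to affines via the direct-sum-over-points description of $C^*(X;M)$---is a detail the paper leaves implicit, and your handling of it is fine (indeed, for any $z\in X^{(p+2)}$ the component of $d^{p+1}\circ d^p$ landing at $z$ only involves points specializing to $z$, hence lies entirely in any affine open containing $z$).
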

The following definition is equivalent to
 that of \cite[(2.1)]{Ros} (see in particular \cite[(3.3)]{Ros}):
\begin{df}
A cycle premodule $M$ which satisfies the equivalent conditions
 (i)-(iv) of the previous lemma is called a cycle module.
\end{df}

\begin{num} \label{num:df_Chow_gp}
Given a cycle module $M$ and a scheme $X$ essentially of finite type
 over $k$, we denote by $A^p(X;M)$ the $p$-th cohomology group
 of the complex $C^*(X;M)$.  This group is graded according
 to the graduation of $C^*(X;M)$.

Recall that according to one of the main construction of Rost,
 $A^*(X;M)$ is contravariant with respect to morphisms of smooth schemes.
 Moreover, according to \cite{Deg5}, the presheaf
$$
X \mapsto A^0(X;M)
$$
has a canonical structure of a homotopy module with transfers
 (def. \ref{df:hmtr}) which actually defines the functor
 $A^0$ of \eqref{eq:deglise}.
\end{num}

\subsection{Modules and ring spectra}

\begin{num} \label{num:SS_model}
Recall that a \emph{ring spectrum} is
 a (commutative) monoid $\R$ of
  the monoidal category $\SH$.

A module over the ring spectrum $\R$ is a $\R$-module in $\SH$
 in the classical sense: a spectrum $\E$ equipped
 with a multiplication map $\gamma_\E:\R \wedge \E \rightarrow \E$
 satisfying the usual identities -- see \cite{Mcl}.\footnote{As $\R$
 is commutative, we will not distinguished
  the left and right $\R$-modules.}

Given two $\R$-modules $\E$ and $\F$,
 a morphism of $\R$-modules is a morphism $f:\E \rightarrow \F$ in $\SH$
 such that the following diagram is commutative:
$$
\xymatrix@R=12pt@C=28pt{
\R \wedge \E\ar^{1_\R \wedge f}[r]\ar_{\gamma_\E}[d]
 & \R \wedge \F\ar^{\gamma_\F}[d] \\
\E\ar^f[r] & \F.
}
$$
This defines an additive category denoted by $\wmod \R$.

Given any spectrum $\E$, $\R \wedge \E$ has an obvious structure
 of a $\R$-module. This defines a functor which is obviously left
 adjoint to the inclusion functor. Thus we get an adjunction of categories:
\begin{equation} \label{eq:SH_wmod}
L_\R^w:\SH \leftrightarrows \wmod \R:\cO_\R^w.
\end{equation}
\end{num}

\rem The category $\wmod \R$ has a poor structure.
For example, it is not possible in general to define a tensor product
 or a triangulated structure on $\wmod \R$. 
 This motivates the definitions which follow.

\begin{num} \label{num:strict_ring_sp}
Recall also from \cite{Jar} that $\SH$ is the homotopy category
 of a monoidal model category: the category of
 symmetric spectra denoted by $\Sp$.
A \emph{strict ring spectrum} $\R$ is a commutative monoid object
 in the category $\Sp$. Given such an object we can form
 the category $\spmod \R$ of $\R$-modules with respect to 
 the monoidal category $\Sp$ and consider the natural
 adjunction
\begin{equation} \label{eq:free-Rmod_Sp}
\Sp \leftrightarrows \spmod \R
\end{equation}
where the right adjoint is the forgetful functor.
An object of $\spmod \R$ will be called a \emph{strict module
over $\R$}.

The monoidal model category $\Sp$ satisfies the monoid axiom
 of Schwede and Shipley: this implies that the category of
 strict $\R$-modules admits a (symmetric) monoidal model structure
 such that the right adjoint of \eqref{eq:free-Rmod_Sp} preserves
 and detects fibrations and weak equivalences
 (cf \cite[4.1]{SS}).
\end{num}
\begin{df}
We denote by $\smod \R$ the homotopy category
 associated with the model category $\spmod \R$ 
 described above.
\end{df}
\begin{num} \label{num:adj_smod}
Note that $\smod \R$ is a monoidal triangulated category.
The Quillen adjunction \eqref{eq:free-Rmod_Sp} induces a canonical
 adjunction:
\begin{equation} \label{eq:free-Rmod_SH}
L_\R:\SH \leftrightarrows \smod \R:\cO_\R.
\end{equation}
By construction,
 the functor $L_\R$ is triangulated and monoidal.
Given a smooth scheme $X$ and a couple $(i,n) \in \ZZ^2$,
 we will put:
\begin{equation} \label{free_Rmod}
\uR(X)(n)[i]:=L_\R(S^{i,n} \wedge \sus X_+).
\end{equation}
Note that the $\R$-modules of the above shape are compacts
 and form a family of generators for the triangulated category $\smod \R$.

The functor $\cO_\R$ is triangulated and
 conservative. Because it is the right adjoint of a monoidal functor,
 it is weakly monoidal.
In other words, for any strict $\R$-modules $M$ and $N$,
we get a canonical map in $\SH$:
$$
\cO_\R(M \otimes_\R N)
 \rightarrow \cO_\R(M) \wedge \cO_\R(N).
$$
This implies that $\cO_\R$ maps into the subcategory
 $\wmod \R$ giving a functor
\begin{equation} \label{eq:strict_to_weak_mod}
\cO_\R':\smod \R \rightarrow \wmod \R.
\end{equation}
For any spectrum $\E$, $\cO_\R L_\R(\E)=\R \wedge \E$.
Thus the following diagram commutes:
\begin{equation} \label{eq:strict_to_weak_mod_ppty}
\begin{split}
\xymatrix@=14pt{
& \SH\ar^{L^w_\R}[rd]\ar_{L_\R}[ld] & \\
\smod \R\ar|{\cO'_\R}[rr] && \wmod \R.
}
\end{split}
\end{equation}
\end{num}

\begin{ex} \label{ex:strict_oriented_ring}
The spectrum $\HH$ 
 representing motivic
 cohomology has a canonical structure of a strict ring spectrum.
This follows from the adjunction of triangulated categories
\begin{equation} \label{eq:SH&DM}
\gamma^*:\SH \leftrightarrows \DM:\gamma_*
\end{equation}
where $\DM$ is the category of stable motivic complexes over $k$.
In fact, by the very definition, 
$$
\HH=\gamma_*(\mathbbm 1)
$$
where $\mathbbm 1$ is the unit object for the monoidal structure on $\DM$.
To prove that $\HH$ is a strict ring spectrum,
 the argument is that $\gamma^*$ is induced by
 a monoidal left Quillen functor between the underlying monoidal
 model categories (see \cite{CD3} for details).

Note also that any object of $\DM$ admits a canonical
 structure of a strict $\HH$-module. 
 In fact, 
 the previous adjunction induces a canonical adjunction
\begin{equation}\label{eq:Hmod&DM}
\tilde \gamma^*:\smod \HH \leftrightarrows \DM:\tilde \gamma_*
\end{equation}
(see again \cite{CD3}).
\end{ex}

\subsection{On the Gysin triangle and morphism}
\label{sec:recall_Gysin}

\begin{num} \label{Deg8applies}
The proof of theorem \ref{thm:generic_smod}
 requires some of the results of \cite{Deg8}.
 We recall them to the reader to make the proof more intelligible.

Recall that for any smooth scheme $X$,
 we have\footnote{Note that,
 as in example \ref{ex:unramified_Milnor&HH},
 one uses the cancellation theorem of Voevodsky to get this isomorphism.}
  an isomorphism of abelian groups:
\begin{equation} \label{eq:Pic&motivic_coh}
c_1:\pic(X) \rightarrow \HH^{2,1}(X).
\end{equation}
Considering the left adjoint of \eqref{eq:free-Rmod_SH},
 we get a monoidal functor:
\begin{equation}\label{eq:sh->Hmod}
\SH \rightarrow \smod \HH
\end{equation}
and this functor shows the category $\smod \HH$ satisfies the axioms
 of paragraph 2.1 in \emph{loc. cit.}
  -- see example 2.12 of \emph{loc. cit.}
\end{num}

\begin{rem}\label{rem:comput_r_times}
Note that,
 because \eqref{eq:Pic&motivic_coh} is a morphism of group,
 the formal group law attached to the category $\smod \HH$
 in \cite[3.7]{Deg8} is just the additive formal group
 law $F(x,y)=x+y$.
In particular, the power series in the indeterminate $x$
$$
[2]_F=F(x,x), [3]_F=F(x,F(x,x)), \hdots
$$ 
are simply given by the formula:
$[r]_F=r.x$ for any integer $r$.
\end{rem}

\begin{num}
\textit{The Gysin triangle}: For any smooth schemes $X$, $Z$ and
 any closed immersion $i:Z \rightarrow X$ of pure codimension $c$,
 with complementary open immersion $j$,
 there exists a canonical distinguished triangle (see \cite[def. 4.6]{Deg8}):
\begin{equation} \label{eq:Gysin_tri_smod}
\uHH(Z-X) \xrightarrow{j_*} \uHH(X) \xrightarrow{i^*}
 \uHH(Z)(c)[2c] \xrightarrow{\partial_{X,Z}} \uHH(X-Z)[1].
\end{equation}
The map $i^*$ (resp. $\partial_{X,Z}$)
 is called the Gysin (resp. residue) morphism associated with $i$.
Consider moreover a commutative square of smooth schemes
\begin{equation} \label{eq:cartesian_sq_lambda}
\begin{split}
\xymatrix@=20pt{
T\ar^k[r]\ar_q[d] & Y\ar^p[d] \\
Z \ar^i[r] & X
}
\end{split}
\end{equation}
such that $i$ and $k$ are closed immersions of pure codimension $c$
 and $T$ is the reduced scheme associated with $Z \times_X Y$.
 Let $h:(Y-T) \rightarrow (X-Z)$ be the morphism induced by $p$.
Then the following formulas hold:
\begin{itemize}
\item[(G1a)] Assume $T=Z \times_X Y$.
Then, according to \cite[prop. 4.10]{Deg8}, the following diagram is commutative:
\begin{equation*}
\xymatrix@R=20pt@C=30pt{
\uHH(Y)\ar^-{k^*}[r]\ar_{p_*}[d]
 & \uHH(T)(c)[2c]\ar^-{\partial_{Y,T}}[r]\ar_{q_*}[d]
  & \uHH(Y-T)[1]\ar^{h_*}[d] \\
\uHH(X)\ar^-{i^*}[r]
 & \uHH(Z)(c)[2c]\ar^-{\partial_{X,Z}}[r] & \uHH(X-Z)[1].
}
\end{equation*}
\item[(G1b)] Assume $Z \times_X Y$ is irreducible
 and let $e$ be its geometric multiplicity.
 Then according to \cite[4.26]{Deg8} combined with remark \ref{rem:comput_r_times},
  the following diagram is commutative:
\begin{equation*}
\xymatrix@R=20pt@C=30pt{
\uHH(Y)\ar^-{k^*}[r]\ar_{p_*}[d]
 & \uHH(T)(c)[2c]\ar^-{\partial_{Y,T}}[r]\ar_{e.q_*}[d]
  & \uHH(Y-T)[1]\ar^{h_*}[d] \\
\uHH(X)\ar^-{i^*}[r]
 & \uHH(Z)(c)[2c]\ar^-{\partial_{X,Z}}[r] & \uHH(X-Z)[1].
}
\end{equation*}
\end{itemize}
\end{num}

\begin{rem} \label{rem:fdl_class}
The Gysin morphism $i^*$ in the triangle \eqref{eq:Gysin_tri_smod}
 induces a pushout in motivic cohomology:
$$
i_*:\HH^{n,m}(Z) \rightarrow \HH^{n+2c,m+c}(X).
$$
Considering the unit element of the graded algebra $\HH^{*,*}(Z)$,
 we define the fundamental class associated with $i$ 
 as the element $\eta_X(Z):=i_*(1) \in \HH^{2c,c}(X)$.

Assume $c=1$.
 Then $Z$ defines an effective Cartier divisor of $X$
 which corresponds uniquely to a line bundle $L$ over $X$
 together with a section $s:X \rightarrow L$.
 Moreover, $s$ is transversal to the zero section $s_0$
 and the following square is cartesian:
$$
\xymatrix@=10pt{
Z\ar^-i[r]\ar_i[d] & X\ar^s[d] \\
X\ar^{s_0}[r] & L
}
$$
Then according to \cite[4.21]{Deg8},
 $\eta_X(Z)=c_1(L)$ with the notation of \eqref{eq:Pic&motivic_coh}.\footnote{
 One can prove more generally that, through the isomorphism
 $\HH^{2c,c}(X) \simeq CH^c(X)$ where the left hand side denotes the Chow group
 of codimension $c$ cycles, $i_*$ agrees with the usual pushout on Chow groups
 (see \cite[Prop. 3.11]{Deg9}).
 Thus $\eta_X(Z)$ simply corresponds to the cycle of $Z$ in $X$.}
\end{rem}

\begin{num} \label{num:prelim_lm:add_Gysin}
Consider a smooth scheme $X$ such that
 $X=X_1 \sqcup X_2$ and let $x_1:X_1 \rightarrow X$
 be the canonical open and closed immersion.
 By additivity, $\uHH(X)=\uHH(X_1) \oplus \uHH(X_2)$.
 Moreover, $x_{1*}$ is the obvious split inclusion
 which corresponds to this decomposition.
 Correspondingly, it follows from (G1a) that
  $x_1^*$ is the obvious split epimorphism.
 One can complete \cite{Deg8} by the following lemma
  which describes the additivity properties of the Gysin triangle:
\end{num}
\begin{lm} \label{lm:add_Gysin}
Consider smooth schemes $X$, $Z$ and
 a closed immersion $\nu:Z \rightarrow X$ of pure codimension $n$.

Consider the canonical decompositions $Z=\sqcup_{i \in I} Z_i$
 and $X=\sqcup_{j \in J} X_j$ into connected components
 and put $\hat Z_j=Z \times_X X_j$.
 We also consider the obvious inclusions:
$$
z_i:Z_i \rightarrow Z, 
 x_j:X_j \rightarrow X, 
 u_j:(X_j-\hat Z_j) \rightarrow (X-Z)
$$
and the morphisms $\nu_{ji}$, $\partial_{ij}$ uniquely defined by the following commutative diagram:
$$
\xymatrix@C=55pt{
\uHH(X)\ar^{\nu^*}[r]
 & \uHH(Z)(n)[2n]\ar^{\partial_{X,Z}}[r]
 & \uHH(X-Z)[1] \\
\oplus_{j \in J} \uHH(X_j)\ar_-{(\nu_{ji})_{j \in J,i \in I}}[r]
  \ar_\sim^{\sum_j x_{j*}}[u]
 & \oplus_{i \in I} \uHH(Z_i)(n)[2n]\ar_-{(\partial_{ij})_{i \in I, j \in J}}[r]
 \ar^\sim_{\sum_i z_{i*}}[u]
 & \oplus_{j \in J} \uHH(X_j-\hat Z_j)[1].
  \ar^\sim_{\sum_j u_{j*}}[u]. \\
}
$$
Then, for any couple $(i,j) \in I \times J$:
\begin{enumerate}
\item if $Z_i \subset X_j$, we let $\nu_i^j:Z_i \rightarrow X_j$
 be the obvious inclusion and we put: $Z'_i=\hat Z_j-Z_i$. \\
 Then $\nu_{ji}=\big(\nu_i^j\big)^*$
 and $\partial_{ij}=\partial_{X_j-Z'_i,Z_i}$.
\item Otherwise,
$\nu_{ji}=0$  and $\partial_{ij}=0$.
\end{enumerate}
\end{lm}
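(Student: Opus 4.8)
\textbf{Proof plan for Lemma \ref{lm:add_Gysin}.}
The strategy is to reduce everything to the functoriality square (G1a) and the known additivity of the Gysin triangle for a disjoint union of open and closed pieces (paragraph \ref{num:prelim_lm:add_Gysin}). First I would fix $j \in J$ and apply (G1a) to the cartesian square whose right vertical map is the open-and-closed immersion $x_j:X_j \rightarrow X$; here the scheme in the upper-left corner is $Z \times_X X_j = \hat Z_j$, and the induced map on complements is $X_j - \hat Z_j \rightarrow X - Z$. This gives a morphism of Gysin triangles comparing the triangle for $\hat Z_j \hookrightarrow X_j$ with that for $Z \hookrightarrow X$, in which the vertical arrows are exactly the split inclusions $x_{j*}$, $\hat z_{j*}$ (the inclusion $\hat Z_j \hookrightarrow Z$) and $u_{j*}$. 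Summing over $j$ and using the additivity of $\uHH$ on $X = \sqcup_j X_j$ identifies the top row of the displayed diagram with the direct sum over $j$ of the Gysin triangles of $\hat Z_j \hookrightarrow X_j$. So the problem is reduced to the case $X$ connected, i.e.\ to understanding the Gysin triangle of $\hat Z_j \hookrightarrow X_j$ and its decomposition according to the connected components $Z_i$ of $\hat Z_j$.

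Next, with $X = X_j$ connected, write $\hat Z_j = \sqcup_{i} Z_i$ as its decomposition into connected (hence open and closed in $\hat Z_j$) components, indexed by those $i \in I$ with $Z_i \subset X_j$. By paragraph \ref{num:prelim_lm:add_Gysin} applied inside $\hat Z_j$, the Gysin morphism $z_{i*}$ is the split inclusion and its retraction $z_i^*$ the split projection corresponding to $\uHH(\hat Z_j) = \oplus_i \uHH(Z_i)$. Now I would analyse $\nu_{ji}$, the component of $\nu^*$ landing in $\uHH(Z_i)(n)[2n]$: composing $\nu^*:\uHH(X_j) \rightarrow \uHH(\hat Z_j)(n)[2n]$ with the projection $z_i^*$, and applying (G1a) to the square with closed immersions $Z_i \hookrightarrow X_j$ and $\hat Z_j \hookrightarrow X_j$ (the reduced intersection $Z_i \times_{X_j} \hat Z_j$ being $Z_i$ itself, as $Z_i$ is a component), one gets $z_i^* \circ \nu^* = (\nu_i^j)^*$ where $\nu_i^j:Z_i \rightarrow X_j$ is the inclusion of codimension $n$. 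This identifies $\nu_{ji} = (\nu_i^j)^*$. The complementary open here is $X_j - Z'_i$ with $Z'_i = \hat Z_j - Z_i$, and the same square (G1a) simultaneously identifies the residue component $\partial_{ij}$ with $\partial_{X_j - Z'_i, Z_i}$, since the bottom row of the square in (G1a) computes precisely the residue of the smaller immersion composed with the open immersion $X_j - \hat Z_j \hookrightarrow X_j - Z'_i$.

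For case (2), when $Z_i \not\subset X_j$, I would simply note that $Z_i$ and $X_j$ are then disjoint connected components of $X$, so $Z_i \times_X X_j = \emptyset$; the composite $z_i^* \circ \nu^* \circ x_{j*}$ factors through $\uHH(\emptyset) = 0$ by functoriality, forcing $\nu_{ji} = 0$, and dually $\partial_{ij} = 0$ because the corresponding piece of the residue map factors through the zero object as well (or because $u_{j*}$ restricted to the relevant summand is zero).

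\textbf{Main obstacle.} The genuinely delicate point is the identification of the residue components $\partial_{ij}$ with $\partial_{X_j - Z'_i, Z_i}$: one must be careful that removing only the ``other components'' $Z'_i$ from $X_j$ (rather than all of $\hat Z_j$) is compatible with the boundary map, and that the open immersion $X_j - \hat Z_j \hookrightarrow X_j - Z'_i$ is exactly the one appearing as $h_*$ in (G1a) for the relevant square. Everything else is bookkeeping with direct sum decompositions and repeated use of (G1a); the only real input from \cite{Deg8} is the functoriality statement (G1a) itself, no multiplicity correction (G1b) being needed here since all the squares involved are honestly cartesian with reduced intersection.
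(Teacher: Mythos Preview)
Your proof is correct and follows the same strategy as the paper: compute the matrix entries $\nu_{ji}=z_i^*\nu^*x_{j*}$ and $\partial_{ij}=u_j^*\,\partial_{X,Z}\,z_{i*}$ by applying the functoriality of the Gysin triangle to the cartesian squares built from the clopen inclusions. The paper handles the $i$- and $j$-decompositions in one pass via three adjacent squares $\Delta_1,\Delta_2,\Delta_3$ and explicitly invokes (G2a) and (G2b) alongside (G1a), whereas you first reduce to $X$ connected and then split over the $Z_i$; your claim that only (G1a) is needed is a slight overstatement---the identification $z_i^*\circ\nu^*=(\nu_i^j)^*$ is really (G2a) applied to the factorisation $\nu_i^j=\nu\circ z_i$, not an application of (G1a)---but this does not affect correctness, and your replacement of the paper's use of (G2b) by the covariant form of (G1a) is valid.
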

\begin{proof}
According to the preamble \ref{num:prelim_lm:add_Gysin},
 we get:
$\nu_{ji}=z_i^* \nu^* x_{j*}$, $\partial_{i,j}=u_j^* \partial_{X,Z} z_{i*}$.

In the respective case (1) and (2), we consider the
 following cartesian squares:
\begin{equation*} \label{eq:pf:prop:add_Gysin}
\begin{split}
\begin{array}{ll}
\text{(1) If } Z_i \subset X_j, &
\text{(2) otherwise,} \\
\xymatrix@=30pt{
Z_i\ar^{\nu_i^j}[r]\ar@{=}[d]\ar@{}|{\Delta_1}[rd]
 & X_j\ar|{\phantom{(}x_j\phantom{)}}[d]\ar@{}|{\Delta_2}[rd]
 & \hat Z_j\ar|{\phantom{\big(}}[d]\ar@{}|{\nu_i^j}[d]\ar_-{\hat z_j}[l]\ar@{}|{\Delta_3}[rd]
 & Z_i\ar_-{}[l]\ar@{=}[d] \\
Z_i\ar_{\nu_i}[r] & X
 & Z\ar^\nu[l] & Z_i\ar^{z_i}[l]
} \quad
&
\xymatrix@=30pt{
\emptyset\ar[r]\ar[d]\ar@{}|{\Delta_1}[rd]
 & X_j\ar|/-1pt/{\phantom{(}}[d]\ar@{}|{x_j}[d]\ar@{}|{\Delta_2}[rd]
 & \hat Z_j\ar|{\phantom{\big(}}[d]\ar@{}|{\nu_i^j}[d]\ar_-{\hat z_j}[l]\ar@{}|{\Delta_3}[rd]
 & \emptyset\ar[l]\ar[d] \\
Z_i\ar_{\nu_i}[r] & X
 & Z\ar^\nu[l] & Z_i\ar^{z_i}[l]
}
\end{array}
\end{split}
\end{equation*}
Then the result follows from the following computations:
\begin{align*}
z_i^* \nu^* x_{j*}
 \stackrel{(a)}=\nu_i^* x_{j*}
 \stackrel{(b)}=
&\begin{cases}
(\nu_i^j)^*  & \text{if } Z_i \subset X_j, \\
0 & \text{otherwise}.
\end{cases} \\
u_j^* \partial_{X,Z} z_{i*}
 \stackrel{(c)}=\partial_{X_j,\hat Z_j} \hat z_j^* z_{i*}
 \stackrel{(d)}=
&\begin{cases}
\partial_{X_j,\hat Z_j} (z_i^j)_*\stackrel{(e)}=\partial_{X_j-Z'_i,Z_i}
  & \text{if } Z_i \subset X_j, \\
0 & \text{otherwise}.
\end{cases}
\end{align*}
We give the following justifications for each equality:
(a) use (G2a) ($\nu_i=\nu \circ z_i$), 
(b) use (G1a) applied to $\Delta_1$,
(c) use (G2b) applied to $\Delta_2$,
(d) use (G1a) applied to $\Delta_3$,
(e) use (G1a).
\end{proof}

\begin{num} \label{num_Gysin_morphism_ppty}
\textit{The Gysin morphism}: For any smooth schemes
 $X$ and $Y$ and any projective equidimensional morphism
  $f:Y \rightarrow X$ of dimension $n$,
 there exists (see \cite[def. 5.12]{Deg8})
 a canonical morphism in $\smod \HH$ of the form
\begin{equation} \label{eq:Gysin_morph}
f^*:\uHH(X)(n)[2n] \rightarrow \uHH(Y)
\end{equation}
which coincides with the morphism of \eqref{eq:Gysin_tri_smod}
 in the case where $f$ is a closed immersion. It satisfies the following properties:
\begin{itemize}
\item[(G2a)] Whenever it makes sense, we get (see \cite[prop. 5.14]{Deg8}):
$f^*g^*=(gf)^*.$
\item[(G2b)] Consider a cartesian square of smooth schemes of shape \eqref{eq:cartesian_sq_lambda}
 such that $i$ and $k$ (resp. $p$ and $q$)
 are closed immersion (resp. projective equidimensional morphisms).
 Assume $i$, $k$, $f$, $g$ have constant relative codimension
 $n$, $m$, $s$, $t$ respectively. Put $d=n+t=m+s$.
 Let $h:(Y-T) \rightarrow (X-Z)$ be the morphism induced by $f$.
 Then the following square is commutative
   (see \cite[prop. 5.15]{Deg8}\footnote{There is a misprint in \emph{loc. cit.}: 
   one should read $n+t=m+s$, $d=n+t$.}):
\begin{equation*} \label{eq:Gysin&residue}
\xymatrix@R=20pt@C=35pt{
\uHH(Z)(n)[2n]\ar^-{\partial_{X,Z}}[r]\ar_{g^*}[d] & \uHH(X-Z)[1]\ar^{h^*}[d] \\
\uHH(T)(d)[2d]\ar^-{\partial_{Y,T}}[r] & \uHH(Y-T)(s)[2s+1].
}
\end{equation*}
\item[(G2c)] For any cartesian square of smooth schemes
\begin{equation} \label{eq:comm_sq_lambda_bis}
\begin{split}
\xymatrix@=20pt{
T\ar^q[r]\ar_g[d]\ar@{}|\Delta[rd] & Y\ar^f[d] \\
Z\ar^p[r] & X
}
\end{split}
\end{equation}
such that $p$ and $q$ are projective equidimensional of the same dimension,
 we get (see \cite[5.17(i)]{Deg8}): 
\begin{equation*}
p^*f_*=g_*q^*.
\end{equation*}
\item[(G2d)] Consider a commutative square of smooth schemes
 of shape \eqref{eq:comm_sq_lambda_bis} such that:
\begin{itemize}
\item $p$, $q$, $f$ and $g$ are finite equidimensional morphisms.
\item $T$ is equal to the reduced scheme associated with $T'=Z \times_X Y$.
\end{itemize}
Let $(T'_i)_{i \in I}$ be the connected components of $T'$.
For any index $i \in I$, 
 we let $q_i:T'_i \rightarrow X'$ (resp.
 $g_i:T_i' \rightarrow Y$) be the morphism
 induced by $q$ (resp. $g$)
 and we denote by $r_i$ be the geometric multiplicity
 of $T'_i$.
Then according to \cite[prop. 5.22]{Deg8}
 combined with remark \ref{rem:comput_r_times},
 we get the following formula:
\begin{equation*}
p^*f_*=\sum_{i \in I} r_i.g_{i*}q_i^*.
\end{equation*}
\end{itemize}
\end{num}

\begin{rem} \label{rem:G2b_plus}
At one point\footnote{The proof of Proposition \ref{compute_differentials}.},
 we will need property (G2b) when we only require that
 the square \eqref{eq:cartesian_sq_lambda} is topologically cartesian
 (\emph{i.e.} $T=(Y \times_X Z)_{red}$).
In fact, the proof given in \cite{Deg8} requires only this
 assumption: one proves Proposition 5.15 by reducing to Theorem 4.32
 (the case where $p$ and $q$ are closed immersions),
  and the proof uses only the fact the square is topologically cartesian
  (in the construction of the isomorphism denoted by $\mathfrak p_{(X;Y,Y')}$).
\end{rem}

\begin{num} \label{num:products}
\textit{Products}: Let $X$ be a smooth scheme
 and $\delta:X \rightarrow X \times X$ the diagonal embedding.
 For any morphisms $a:\uHH(X) \rightarrow \E$ and $b:\uHH(X) \rightarrow \F$,
 we denote by $a \ecuppx X b$ the composite morphism:
\begin{equation} \label{eq:product}
\uHH(X) \xrightarrow{\delta_*} \uHH(X) \otimes_\HH \uHH(X)
 \xrightarrow{a \otimes_\HH b} \E \otimes_\HH \F.
\end{equation}
\begin{itemize}
\item[(G3a)] For any morphism $f:Y \rightarrow X$ of smooth schemes
 and any couple $(a,b)$ of morphisms in $\smod R$ with source $\uHH(Y)$,
 we get (obviously):
$$
(a \ecuppx X b) \circ f_*=(a \circ f_*) \ecuppx Y (b \circ f_*)
$$
\item[(G3b)] For any projective morphism $f:Y \rightarrow X$
 of smooth schemes and any morphism $a$ (resp. $b$) in $\smod \HH$
 with source $\uHH(X)$ (resp. $\uHH(Y)$), we get
 (see \cite[5.18]{Deg8}):
\begin{align*}
\lbrack a \ecuppx X (b \circ f_*)\rbrack \circ f^*
=(a \circ f^*) \ecuppx Y b. \\
\lbrack (b \circ f_*) \ecuppx X a \rbrack \circ f^*
=b \ecuppx Y (a \circ f^*).
\end{align*}
\end{itemize}
\end{num}

\begin{rem} \label{rem:comput_fdl_class}
Consider a smooth scheme $X$ and
 a vector bundle $E/X$ of rank $n$. Let $P=\PP(E)$ be the associated
 projective bundle with projection $p:P \rightarrow X$
 and $\lambda$ the canonical line bundle such that
 $\lambda \subset p^{-1}(E)$.
  Then, for any integer $r \geq 0$,
 the $r$-th power of the class $c=c_1(\lambda^\vee)$ (see \eqref{eq:Pic&motivic_coh})
 in motivic cohomology corresponds to a morphism in $\smod \HH$
 which we denote by:
 $c^r:\uHH(P) \rightarrow \uHH(r)[2r]$.
The projective bundle theorem (\cite[3.2]{Deg8}) says that the map
$$
\sum_{0 \leq r \leq n} p_* \ecuppx P c^r:\uHH(P)
 \rightarrow \oplus_{0 \leq r \leq n} \uHH(X)(r)[2r].
$$
is an isomorphism. Thus we get a canonical map
 $\mathfrak l_n(P):\uHH(X)(n)[2n] \rightarrow \uHH(P)$.

Given a finite epimorphism $f:Y \rightarrow X$
 which admits a factorization as
$$
Y \xrightarrow i P \xrightarrow p X
$$
where $i$ is a closed immersion and $P/X$ is the projective bundle considered above,
we recall that the Gysin morphism $f^*$ of \eqref{eq:Gysin_morph} is defined 
as the composite map:
$$
\uHH(X)(n)[2n] \xrightarrow{\mathfrak l_n(P)} \uHH(P)
 \xrightarrow{i^*} \uHH(Y)(n)[2n]
$$
after taking the tensor product with $\HH(-n)[-2n]$.

Assume $Y$ (thus $X$) is connected and $P/X$ has relative dimension $1$.
Consider the map
 $f_*:\HH^{n,m}(Y) \rightarrow \HH^{n,m}(X)$ 
 induced by $f^*$. 
 According to the above description,
  we get a factorization of $f_*$ as:
$$
\HH^{n,m}(Y) \xrightarrow{i_*} \HH^{n+2,m+1}(P)
 \xrightarrow{p_*} \HH^{n,m}(X).
$$
where $i_*$ corresponds to the morphism of 
 remark \ref{rem:fdl_class}.
 To describe the second map, recall that any class
 $\alpha \in \HH^{n+2,m+1}(P)$ can be written uniquely
 as $\alpha=p^*(\alpha_0)+p^*(\alpha_1).c_1(\lambda^\vee)$:
 then $p_*(\alpha)=\alpha_1$.
Thus, we now deduce from this description the following \emph{trace formula}:
\begin{equation} \label{eq:trace_formula}
f_*(1)=d
\end{equation}
where $d$ is the generic degree of $f$.
In fact, according to remark \ref{rem:fdl_class}
 and its notations, $i_*(1)=c_1(L)$.
 Thus the formula follows from the equality
 $c_1(L)=d.c_1(\lambda^\vee)$
 in $\pic(P)$ modulo $\pic(X)$.
\end{rem}

\begin{num}
Consider a regular invertible function $x:X \rightarrow \GG$
 on a smooth scheme $X$.
According to the canonical decomposition $\uHH(\GG)=\uHH \oplus \uHH(1)[1]$,
 it induces a morphism:
$x':\uHH(X) \rightarrow \uHH(1)[1]$.

Using the product \eqref{eq:product}, we then deduce the following morphism:
\begin{equation} \label{eq:action_units}
\gamma_x=1_{X*} \ecuppx X x':\uHH(X) \rightarrow \uHH(X)(1)[1].
\end{equation}
If $\nu_x:X \rightarrow X \times \GG$ denotes the graph of $x$,
 then $\gamma_x$ is also equal to the following composite map:
\begin{equation} \label{eq:action_units_bis}
\uHH(X) \xrightarrow{\nu_{X*}} \uHH(X \times \GG)=\uHH(X) \otimes_\HH \uHH(\GG)
 \rightarrow \uHH(X)(1)[1].
\end{equation}
We will need the following properties of this particular kind of products:
\end{num}
\begin{prop} \label{prop:residues_units}
Let $X$ be a smooth scheme
 and $i:Z \rightarrow X$ be the immersion of a smooth divisor.
 Put $U=X-Z$ and let $j:U \rightarrow X$ be the canonical open immersion.
\begin{enumerate}
\item Let $x:X \rightarrow \GG$ be a regular invertible function,
 $\bar u$ (resp. $u$) its restriction to $Z$ (resp. $U$).
 Then the following diagram is commutative:
$$
\xymatrix@R=20pt@C=50pt{
\uHH(Z)(1)[1]\ar^-{\partial_{X,Z}}[r]\ar_{\gamma_{\bar u} \otimes_\HH Id}[d]
 & \uHH(U)\ar^{\gamma_u}[d] \\
\uHH(Z)(2)[2]\ar^-{-\partial_{X,Z} \otimes_\HH Id}[r] & \uHH(U)(1)[1]
}
$$
\item Consider a regular function $\pi:X \rightarrow \AA^1$
 such that $Z=\pi^{-1}(0)$. Write again $\pi:U \rightarrow \GG$
 for the obvious restriction of $\pi$. Then the following diagram
 is commutative:
$$
\xymatrix@R=14pt@C=36pt{
\uHH(Z)(1)[1]\ar^-{\partial_{X,Z}}[r]\ar_{i_*}[rd]
 & \uHH(U)\ar^-{\gamma_\pi}[r] 
 & \uHH(U)(1)[1]\ar^{j_*}[ld] \\
& \uHH(Z)(1)[1] &
}
$$
\end{enumerate}
\end{prop}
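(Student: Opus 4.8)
The plan is to derive both compatibilities from the functoriality of the Gysin triangle — properties (G1a), (G2a), (G2b) of \S\ref{sec:recall_Gysin} — applied to cartesian squares built from the \emph{graphs} of the functions, together with the description \eqref{eq:action_units_bis} of the operators $\gamma_{(-)}$ as a graph-pushforward followed by the projection onto the second summand of the canonical decomposition $\uHH(\GG)=\uHH\oplus\uHH(1)[1]$. The recurring mechanism is that the Gysin triangle of a ``product'' closed immersion is the tensor product, by $\uHH(\GG)$ or $\uHH(\AA^1)$, of the Gysin triangle of the original one.

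For (1), write $\nu_x\colon X\to X\times\GG$, $\nu_u\colon U\to U\times\GG$ and $\nu_{\bar u}\colon Z\to Z\times\GG$ for the graphs of $x$, $u$, $\bar u$. The square
\[
\xymatrix@=20pt{
Z\ar^-{\nu_{\bar u}}[r]\ar_i[d] & Z\times\GG\ar^{i\times\mathrm{Id}}[d] \\
X\ar^-{\nu_x}[r] & X\times\GG
}
\]
is cartesian (the scheme-theoretic preimage $\nu_x^{-1}(Z\times\GG)$ is exactly $Z$), all its arrows are closed immersions, $i$ and $i\times\mathrm{Id}$ have codimension one, and the morphism it induces on open complements is $\nu_u$. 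So (G1a) produces a morphism of Gysin triangles, from that of $i\colon Z\to X$ to that of $i\times\mathrm{Id}$, with vertical maps $\nu_{x*}$, $\nu_{\bar u*}$, $\nu_{u*}$. Identifying the second triangle with the first tensored by $\uHH(\GG)=\uHH\oplus\uHH(1)[1]$, projecting onto the summand coming from $\uHH(1)[1]$, and using that along this decomposition the components of $\nu_{u*}$ (resp. $\nu_{\bar u*}$) are $\mathrm{Id}$ and $\gamma_u$ (resp. $\mathrm{Id}$ and $\gamma_{\bar u}$) by \eqref{eq:action_units_bis}, one turns the square of the (G1a)-ladder involving the residues into precisely the square of the statement. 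The sign $-\partial_{X,Z}\otimes_\HH\mathrm{Id}$ is the sign with which the residue of $i\times\mathrm{Id}$ acts on the odd K\"unneth summand, i.e.\ the discrepancy between the K\"unneth isomorphism for $\uHH(\GG)$ and the boundary maps of the Gysin triangle; it is the motivic incarnation of Rost's axiom (R3e).

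For (2) one uses $\AA^1$ instead of $\GG$: let $\nu_\pi\colon X\to X\times\AA^1$ be the graph of $\pi$ and $s_0\colon X\to X\times\AA^1$ the zero section. They agree exactly along $Z=\pi^{-1}(0)$, so
\[
\xymatrix@=20pt{
Z\ar^-i[r]\ar_i[d] & X\ar^{\nu_\pi}[d] \\
X\ar^-{s_0}[r] & X\times\AA^1
}
\]
is cartesian, with (automatically reduced) fibre $Z=V(\pi)$, $i$ and $s_0$ closed immersions of codimension one, and the induced map on open complements is $\nu_\pi|_U\colon U\to X\times\GG$, which factors as $U\xrightarrow{g}U\times\GG\xrightarrow{j\times\mathrm{Id}}X\times\GG$ with $g$ the graph of $\pi|_U$. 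Applying (G1a), the square involving the residues reads $(\nu_\pi|_U)_*\circ\partial_{X,Z}=\partial'\circ i_*$, where $\partial'$ is the residue of $s_0$ and $i_*$ the covariant functoriality of $i$. Now the Gysin triangle of $s_0$ splits: the fundamental class of $X\times\{0\}$ in $X\times\AA^1$ is $c_1$ of the pullback of $\cO_{\AA^1}(\{0\})$, a trivial line bundle, hence zero (remark \ref{rem:fdl_class}), so $s_0^*=0$; consequently, under $\uHH(X\times\AA^1)\simeq\uHH(X)$ and $\uHH(X\times\GG)=\uHH(X)\oplus\uHH(X)(1)[1]$, the map $\partial'$ is — up to sign — the split monomorphism onto the second summand, with retraction the K\"unneth projection $\mathrm{pr}_2$. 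Composing $(\nu_\pi|_U)_*\circ\partial_{X,Z}=\partial'\circ i_*$ with $\mathrm{pr}_2$ therefore recovers $i_*$ on the right; on the left, $(j\times\mathrm{Id})_*=j_*\otimes_\HH\mathrm{Id}$ commutes with $\mathrm{pr}_2$ and $\gamma_\pi=\mathrm{pr}_2\circ g_*$ by \eqref{eq:action_units_bis}, so the left side becomes $j_*\circ\gamma_\pi\circ\partial_{X,Z}$. Once the signs are checked to cancel, this is the claimed identity $i_*=j_*\circ\gamma_\pi\circ\partial_{X,Z}$ (the discrepancy between the $[2]$'s of the Gysin triangle and the $[1]$'s in the statement being harmless).

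The main obstacle is twofold. First, one must make precise and functorial the identification of the Gysin triangle of $i\times\mathrm{Id}_{\GG}$ (resp.\ of $s_0\colon X\hookrightarrow X\times\AA^1$) with the tensor product of simpler Gysin triangles, and verify its compatibility with the geometrically defined operators $\gamma_{(-)}$ and $\partial_{(-)}$: this rests on the fact that the Gysin machinery of \cite{Deg8} commutes with products and smooth base change, which is implicit in (but not isolated among) properties (G1)--(G3) — compare remark \ref{rem:G2b_plus}. Second is the sign accounting: pinning down exactly where the $-1$ of (1) originates and checking that no residual sign survives in (2).
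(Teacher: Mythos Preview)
Your argument for (1) coincides with the paper's: apply (G1a) to the cartesian square of graphs, then identify $\partial_{X\times\GG,Z\times\GG}$ with $\partial_{X,Z}\otimes_\HH\mathrm{Id}$ up to a symmetry. The paper sharpens exactly the two points you flag as obstacles: it cites \cite[4.12]{Deg8} for the product compatibility of the residue map, and it pins the sign to the symmetry isomorphism $\epsilon$ exchanging $\uHH(Z)\otimes_\HH\uHH(\GG)(1)[1]$ with $\uHH(Z)(1)[1]\otimes_\HH\uHH(\GG)$, which equals $-1$ on the odd summand $\uHH(1)[1]\subset\uHH(\GG)$ by the Koszul sign rule for motivic cohomology.

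For (2) the paper gives no argument and simply refers the reader to \cite[2.6.5]{Deg5}. Your explicit route via the cartesian square built from $\nu_\pi$ and the zero section $s_0\colon X\to X\times\AA^1$, exploiting the splitting of the Gysin triangle of $s_0$ (equivalently, the vanishing of $\eta_{X\times\AA^1}(X\times\{0\})$), is correct and is in fact the same mechanism as in that reference. The sign you leave open is genuine but routine once one fixes the identification of $\partial'$ with the canonical inclusion of the second K\"unneth summand of $\uHH(X\times\GG)$.
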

\begin{proof}
\noindent (1) Let $\nu_x$, $\nu_u$, $\nu_{\bar u}$ be the respective graphs
 of $x$, $u$ and $\bar u$.
Applying property (G1a), we get a commutative square:
$$
\xymatrix@R=16pt@C=64pt{
\uHH(Z)(1)[1]\ar^-{\partial_{X,Z}}[r]\ar_{\nu_{\bar u*} \otimes_\HH Id}[d]
 & \uHH(U)\ar^{\nu_{u*}}[d] \\
\uHH(Z \times \GG)(1)[1]\ar^-{\partial_{X \times \GG,Z \times \GG}}[r]
 & \uHH(U \times \GG)
}
$$
According to \cite[4.12]{Deg8}, we get a commutative diagram:
$$
\xymatrix@R=16pt@C=40pt{
\uHH(Z \times \GG)(1)[1]\ar^-{\partial_{X \times \GG,Z \times \GG}}[rr]
 \ar@{=}[d]
 && \uHH(U \times \GG)\ar@{=}[d] \\
\uHH(Z) \otimes_\HH \uHH(\GG)(1)[1]\ar^\epsilon_\sim[r]
 & \uHH(Z)(1)[1] \otimes_\HH \uHH(\GG)\ar^-{\partial_{X,Z} \otimes Id}[r]
 & \uHH(U) \otimes_\HH \uHH(\GG)
}
$$
where $\epsilon$ is the symmetry isomorphism for the tensor product
of $\smod \HH$. The result then follows from the fact $\epsilon=-1$.\footnote{
Indeed it is well known that for any classes
 $x,y \in \HH^{n,m}(X) \times \HH^{s,t}(X)$, $xy=(-1)^{n+s}yx$.}

\noindent (2) For this point, we refer the reader to the proof of
 \cite[2.6.5]{Deg5}.
\end{proof}

\subsection{Coniveau spectral sequence}

\begin{num}
Recall from \cite[sec. 3.1.1]{Deg6} that
 a \emph{triangulated exact couple}
 in a triangulated category $\T$ is the data of bigraded
objects $D$ and $E$ of $\T$ and homogeneous morphisms 
between them
\begin{equation}
\label{hlg_exact_couple}
\xymatrix@=28pt{
D\ar^{(1,-1)}_\alpha[rr]
 &&  D\ar^{(0,0)}_/8pt/\beta[ld] \\
 & E\ar^{(-1,0)}_/-6pt/\gamma[lu] &
}
\end{equation}
with the bidegrees of each morphism indicated in the diagram and such that
 for any pair of integers $(p,q)$:
\begin{enumerate}
\item $D_{p,q+1}=D_{p,q}[-1]$,
\item the following sequence is a distinguished triangle:
$$
D_{p-1,q+1} \xrightarrow{\alpha_{p-1,q+1}} D_{p,q}
 \xrightarrow{\beta_{p,q}} E_{p,q}
 \xrightarrow{\gamma_{p,q}} D_{p-1,q}=D_{p-1,q+1}[1].
$$
\end{enumerate}
We can associate with that triangulated exact couple a
 \emph{differential} according to the formula:
$d=\beta \circ \gamma$.
 According to point (2) above, $d^2=0$ and we have defined a complex in $\T$:
$$
\hdots \rightarrow E_{p+1,q} \xrightarrow{d_{p,q}} E_{p,q} \rightarrow \hdots
$$
Recall also from \cite[def. 3.3]{Deg6} that, given a smooth scheme $X$,
 a \emph{flag} of $X$ is a decreasing sequence $(Z^p)_{p \in \ZZ}$ 
of closed subschemes of $X$ such that for all integer $p \in \ZZ$,
$Z^p$ is of codimension greater than $p$ in $X$ if $p \geq 0$
 and $Z^p=X$ otherwise.
We denote by $\drap X$ the set of flags of $X$,
 ordered by termwise inclusion.

Consider such a flag $(Z^p)_{p \in \ZZ}$. Put $U_p=X-Z^p$.
By hypothesis, we get an open immersion $j_{p}:U_{p-1} \rightarrow U_p$.
In the category of pointed sheaves of sets on $\sm$,
 we get an exact sequence:
$$
0 \rightarrow (U_{p-1})_+ \xrightarrow{j_{p}} (U_p)_+
 \rightarrow U_p/U_{p-1} \rightarrow 0
$$
because $j_{p}$ is a monomorphism (in the category of schemes).
Then, for any pair of integers $(p,q)$,
 we deduce from that exact sequence 
 a distinguished triangle in $\SH$:
\begin{equation} \label{eq:pre_coniv_ec}
\begin{split}
\sus (U_{p-1})_+[-p-q]
& \xrightarrow{j_{p*}} \sus (U_p)_+[-p-q]
 \xrightarrow{} \sus(U_p/U_{p-1})[-p-q] \\
& \xrightarrow{} \sus (U_{p-1})_+[-p-q+1],
\end{split}
\end{equation}
which in turn gives a triangulated exact couple according
 to the above definition.
\end{num}

\begin{num}
In the followings,
 we will not consider the preceding exact couple
 for only one flag. Rather, we remark that the triangle
 \eqref{eq:pre_coniv_ec} is natural with respect to the inclusion of flags:
 thus we really get a projective system of triangles
  and then a projective system of triangulated exact couples.
  
Recall that a pro-object of a category $\mathcal C$ 
is a (\emph{covariant}) functor $F$ from a left filtering 
category $\mathcal I$ to the category $\mathcal C$.
Usually, we will denote such a pro-object $F$ by the intuitive notation
$\pplim{i \in \mathcal I} F_i$
 and call it the \emph{formal projective limit} of
 the projective system $(F_i)_{i \in \mathcal I}$.

For any integer $p \in \ZZ$,
 we introduce the following pro-objects of $\SH$:
\begin{align}
F_p(X)&=\pplim{Z^* \in \drap X} \sus(X-Z^p)_+ \\
Gr_p(X)&=\pplim{Z^* \in \drap X} \sus(X-Z^p/X-Z^{p-1})
\end{align}
Taking the formal projective limit of the triangles
 \eqref{eq:pre_coniv_ec} where $Z^*$ runs over $\drap X$,
  we get a pro-distinguished triangle\footnote{
 \emph{i.e.} the formal projective limit of a projective system
 of distinguished triangles.}:
\begin{equation}  \label{eq:coniv_ec}
\begin{split}
F_{p-1}(X)[-p-q]
& \xrightarrow{\alpha_{p-1,q+1}} F_{p}(X)[-p-q]
 \xrightarrow{\beta_{p,q}} Gr_{p}(X)[-p-q] \\
& \xrightarrow{\gamma_{p,q}} F_{p-1}(X)[-p-q+1],
\end{split}
\end{equation}
\end{num}
\begin{df}
Considering the above notations, we define
 the \emph{homotopy coniveau exact couple} as data
 for any couple of integers $(p,q)$ of the pro-spectra:
$$
D_{p,q}=F_p(X)[-p-q], \qquad E_{p,q}=Gr_p(X)[-p-q]
$$
and that of the homogeneous morphisms of pro-objects $\alpha$, $\beta$,
 $\gamma$ appearing in the pro-distinguished triangle
 \eqref{eq:coniv_ec}.
\end{df}
For short, a projective system of triangulated exact couples
 will be called a \emph{pro-triangulated exact couple}.

\begin{ex}\label{eq:general_coniv_ssp}
Consider a spectrum $\E$.
We extend the functor represented by $\E$ to pro-spectrum as follows:
$$
\bar \varphi_\E:(\F_i)_{i \in \cI} \mapsto \ilim{i \in \cI^{op}} \Hom_{\SH}(\F_i,\E).
$$
Applying the funtor $\bar \varphi_\E$ to the homotopy coniveau exact couple
 gives an exact couple of abelian groups in the classical sense
 (with the conventions of \cite[th. 2.8]{McC}) whose associated spectral sequence
 is:
\begin{equation} \label{eq:coniv_ssp}
E_1^{p,q}(X,\E)=\ilim{Z^* \in \drap X} \E^{p+q}(X-Z^p/X-Z^{p-1})
 \Rightarrow \E^{p+q}(X).
\end{equation}
This is the usual coniveau spectral sequence associated with $\E$
 (see \cite{BO}, \cite{CTHK}).
 Note that it is concentrated in the band $0 \leq p \leq \dim(X)$;
 thus it is convergent.
\end{ex}

\begin{rem}
The canonical functor
$$
\SH \rightarrow \DM,
$$
once extended to pro-objects, sends the data defined above
 to the motivic coniveau exact couple considered in \cite[def. 3.5]{Deg6}.
\end{rem}

\section{The proof} \label{sec:proof}

\subsection{The (weak) $\HH$-module structure}

\begin{prop} \label{prop:orientable_hmod&wmod_HH}
Let $F_*$ be an orientable homotopy module. 

Then the spectrum $H(F_*)$ (see \eqref{eq:generators_hmod})
 admits a canonical structure of $\HH$-module in $\SH$.
\end{prop}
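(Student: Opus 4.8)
The plan is to build the $\HH$-module structure on $H(F_*)$ by exploiting Morel's exact sequence \eqref{eq:sec_Morel} together with the identification of $\spi 0 * \HH$ with the unramified Milnor K-theory sheaf $\uK_*$ (Example \ref{ex:unramified_Milnor&HH}). The key observation is that a module structure over a ring spectrum $\R$ concentrated in non-negative degrees, when restricted to an object $\E$ living in the heart, is entirely controlled by the homotopy module level: giving $\E=H(F_*)$ a (weak) $\HH$-module structure amounts to producing a morphism $\HH \wedge \E \to \E$ in $\SH$ satisfying the unit and associativity constraints. Since $\E$ is in $\pi_0(\SH)$, the adjunction \eqref{eq:hmod_SH} and the fact that $\wedge$ preserves non-negative spectra (Remark \ref{rem:generators_positive}) let us rephrase everything as the problem of equipping the homotopy module $F_*$ with a module structure over the monoid $\uK_*$ in the monoidal category $\hmod$, using the tensor product \eqref{eq:hmod_tensor}.

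First I would compute $\spi 0 * \HH \otimes F_* = \pi_0(\HH \wedge H(F_*))_*$ and observe, via the right-exactness of $\pi_0$ of the homotopy $t$-structure, that applying $\pi_0(- \wedge H(F_*))$ to Morel's sequence \eqref{eq:sec_Morel} yields an exact sequence
\[
\spi 0 * \GG \otimes F_* \xrightarrow{\eta_* \otimes 1} \spi 0 * {S^0} \otimes F_* \to \uK_* \otimes F_* \to 0.
\]
The middle term is canonically $F_*$ (since $\spi 0 * {S^0}$ is the unit, see \eqref{eq:hmod_tensor}), and the left-hand map is precisely $1\otimes\eta_*$ up to the symmetry isomorphism, which is the zero map by the orientability hypothesis (Definition \ref{df:hmor}). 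Hence the quotient map gives a \emph{canonical isomorphism} $\uK_* \otimes F_* \xrightarrow{\ \sim\ } F_*$. Unwinding this, one gets an action map $\mu : \uK_* \otimes F_* \to F_*$, equivalently a map $\HH \wedge H(F_*) \to H(F_*)$ in $\SH$ after applying $H$ and using that $\HH \wedge H(F_*)$ is non-negative so its truncation is computed by $\pi_0$.

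Next I would check the module axioms. The unit axiom is immediate from the construction: the composite $F_* = \spi 0 * {S^0}\otimes F_* \to \uK_*\otimes F_* \xrightarrow{\mu} F_*$ is the identity, because the first arrow followed by the iso $\uK_*\otimes F_*\simeq F_*$ is the identity by construction of that iso as the cokernel projection precomposed with the unit. For associativity one must verify that the two maps $\uK_* \otimes \uK_* \otimes F_* \to F_*$ agree; this reduces, by naturality of Morel's sequence and the fact that $\uK_*$ is itself the $\HH$-homotopy-module with its canonical ring structure, to the associativity of multiplication in $\uK_*$ (equivalently, the strict ring structure on $\HH$ from Example \ref{ex:strict_oriented_ring}) tensored with $F_*$, together with a diagram chase showing the two resulting cokernel identifications coincide. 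Here one uses that $\eta$ acts as $0$ twice over, so no correction terms from the symmetry $\epsilon = -1$ survive.

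The main obstacle I anticipate is the associativity/coherence verification: one must be careful that the isomorphism $\uK_*\otimes F_*\simeq F_*$ extracted from \eqref{eq:sec_Morel} is genuinely compatible with the monoid structure of $\uK_*$ and not merely an abstract isomorphism of sheaves. The cleanest way around this is probably to package the argument functorially: apply the exact (right-exact) functor $-\otimes F_*$ to the entire diagram expressing that $\uK_*$ is a $\spi 0 * {S^0}$-algebra and a quotient of it, so that the module axioms for $F_*$ are inherited verbatim from the (tautological) algebra axioms for $\uK_*$ over the unit $\spi 0 * {S^0}$. Finally, transporting back along the equivalence $H:\hmod\xrightarrow{\sim}\pi_0(\SH)$ and using \eqref{eq:hmod_tensor} together with $\pi_0(\HH\wedge H(F_*))\simeq (\HH\wedge H(F_*))_{\geq 0}$ produces the desired $\HH$-module structure on $H(F_*)$ in $\SH$; note this is only a \emph{weak} module structure, i.e.\ an object of $\wmod\HH$ in the sense of \ref{num:SS_model}, which is all that is claimed.
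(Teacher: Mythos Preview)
Your approach is correct and is precisely the paper's argument with the details spelled out: the paper's own proof is a single sentence invoking exactly the three ingredients you use---Morel's exact sequence \eqref{eq:sec_Morel}, the identification $\spi 0 * \HH=\uK_*$ of Example \ref{ex:unramified_Milnor&HH}, and the fact that the tensor product on $\SH$ preserves non-negative objects for the homotopy $t$-structure. Your explicit construction of the action map via the isomorphism $\uK_*\otimes F_*\simeq F_*$, the verification of the unit and associativity axioms, and the remark that one obtains only a weak $\HH$-module (an object of $\wmod\HH$) are the natural elaborations the paper leaves to the reader.
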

\begin{proof}
This follows from the exact sequence \eqref{eq:sec_Morel},
 example \ref{ex:unramified_Milnor&HH} and the fact that the
 tensor product on $\SH$ preserves positive objects
 for the homotopy t-structure.
\end{proof}

\begin{num}
In particular, the presheaf represented by 
 the (weak) $\HH$-module $H(F_*)$ precomposed with the functor
 \eqref{eq:strict_to_weak_mod} induces a canonical functor:
\begin{equation} \label{eq:hmod&smod_HH}
\varphi_F:(\smod \HH)^{op} \rightarrow \ab.
\end{equation}
According to the commutative diagram \eqref{eq:strict_to_weak_mod_ppty},
 we get a commutative diagram:
\begin{equation} \label{eq:hmod&smod_HH_ppty}
\begin{split}
\xymatrix@R=4pt@C=40pt{
\SH^{op}\ar^-{\varphi^0_F}[rd]\ar_{(L_\HH)^{op}}[dd] &  \\
& \ab \\
(\smod \HH)^{op}\ar_-{\varphi_F}[ru] &
}
\end{split}
\end{equation}
where $\varphi_F^0$ is the presheaf represented by 
 the spectrum $H(F_*)$.
 According to the isomorphism \eqref{eq:coh_hmod},
 this implies that for any smooth scheme $X$ and any integer 
 $n \in \ZZ$,
  $\varphi_F(\uHH(X)(n)[n])=F_{-n}(X)$.
\end{num}

\subsection{The associated cycle premodule}

\begin{num} \label{num:smooth_model}
Let $\cO$ be a formally smooth essentially of finite type $k$-algebra.
A \emph{smooth model} of $\cO$ will be an affine smooth scheme $X=\spec A$
 (of finite type) such that $A$ is a sub-$k$-algebra of $\cO$.
 Let $\model(\cO/k)$ be the set of such smooth models,
  ordered by the relation: $\spec B \leq \spec A$ if $A \subset B$.
 As $k$ is perfect, $\model(\cO/k)$ is a non empty left filtering ordered set.
 We will denote by $(\cO)$ the pro-scheme $(X)_{X \in \model(\cO/k)}$.
\end{num}
\begin{thm} \label{thm:generic_smod}
Consider the above notations and the category $\ecorps$
 introduced in \ref{num:axioms_pre_modl}.
There exists a canonical additive functor
$$
\uHH^{(0)}:\ecorps^{op} \rightarrow \pro{(\smod \HH)}
$$
defined on an object $(E,n)$ of $\ecorps$ by the formula:
$$
\uHH^{(0)}(E,n)=\pplim{X \in \model(E/k)} \uHH(X)(-n)[-n].
$$
\end{thm}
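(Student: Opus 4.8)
The plan is to build the functor $\uHH^{(0)}$ generator by generator, realizing each of the four types of morphisms \textbf{(D1)}--\textbf{(D4)} of $\ecorps$ by the corresponding functoriality available in $\smod \HH$ and its pro-category, and then checking the relations \textbf{(R0)}--\textbf{(R3e)}. First I would make sense of the value on objects: given a function field $E$, the ordered set $\model(E/k)$ of smooth models is nonempty and left filtering (since $k$ is perfect, so $E/k$ is separable and $E$ is a filtered union of smooth finitely generated subalgebras), hence $\pplim{X \in \model(E/k)} \uHH(X)(-n)[-n]$ is a well-defined pro-object of $\smod \HH$. Shrinking a model $X$ to a dense open $U$ induces $\uHH(U)(-n)[-n] \to \uHH(X)(-n)[-n]$ via $j_*$, so these transition maps are the structural maps of the pro-object; functoriality in the Tate twist is clear. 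Thus $\uHH^{(0)}(E,n)$ is defined, and I must now define it on generators.

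For \textbf{(D1)}, an extension $\varphi\colon E \to L$: any smooth model of $L$ receives, after shrinking, a map to a smooth model of $E$ (choose compatible models using that $L \supset E$ as $k$-algebras and both are filtered unions), so $\varphi$ induces a morphism of pro-schemes and hence, applying $\uHH(-)$ covariantly via $f_*$ for flat/open maps — more precisely via the contravariant pullback $f^*$ on motives, which is covariant on $\uHH$ because $\uHH(X) = L_\HH(\sus X_+)$ is covariant in $X$ — a morphism $\uHH^{(0)}(\varphi)\colon \uHH^{(0)}(E,n) \to \uHH^{(0)}(L,n)$ in $\pro{(\smod\HH)}$. For \textbf{(D2)}, a finite extension $\varphi\colon E \to L$: here I would use the Gysin morphism of paragraph \ref{num_Gysin_morphism_ppty}. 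After shrinking, a smooth model $Y$ of $L$ admits a finite flat (hence projective equidimensional of relative dimension $0$) map $f\colon Y \to X$ to a smooth model $X$ of $E$; the Gysin morphism $f^*\colon \uHH(X) \to \uHH(Y)$ of \eqref{eq:Gysin_morph} (with $n=0$), twisted by $(-n)[-n]$, gives $\varphi^*$. Compatibility of this construction as $Y$, $X$ shrink uses (G2c)/(G2d) and the base-change statements of paragraph \ref{num_Gysin_morphism_ppty}. For \textbf{(D3)}, the multiplication $\gamma_x$ by $x \in K^M_r(E)$: since $K^M_r(E)$ is generated by symbols $\{u_1,\dots,u_r\}$ with $u_i \in E^\times$, and each $u_i$ is a regular invertible function on some smimmodel, I would iterate the operation $\gamma_u$ of \eqref{eq:action_units}; the relation $\gamma_u \circ \gamma_{u'} = \gamma_{u'} \circ \gamma_u$ up to sign plus the Steinberg relation $\gamma_u \gamma_{1-u} = 0$ must be checked to get a well-defined operator $\gamma_x$ depending only on the class $x$ (this is essentially the statement that symbols act, and the sign/graded-commutativity is handled exactly as in the footnote to Proposition \ref{prop:residues_units}). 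For \textbf{(D4)}, the residue $\partial_v$ for a valued function field $(E,v)$: after passing to a smooth model $X = \spec A$ of the valuation ring $\cO_v$ with the special point cut out by a smooth principal divisor $Z = \spec \kappa(v)$ (up to shrinking, one can arrange $Z$ smooth and of codimension one), the residue morphism $\partial_{X,Z}$ of the Gysin triangle \eqref{eq:Gysin_tri_smod}, suitably twisted, provides $\partial_v\colon \uHH^{(0)}(E,n) \to \uHH^{(0)}(\kappa(v), n-1)$; independence of the chosen model and the chosen prime uses the additivity Lemma \ref{lm:add_Gysin} and Proposition \ref{prop:residues_units}(2).

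The main obstacle, and the bulk of the work, will be verifying the relations \textbf{(R1c)}, \textbf{(R3a)}--\textbf{(R3e)} — the ones that mix transfers, residues and multiplication — in the pro-category $\pro{(\smod \HH)}$. The compositional relations \textbf{(R0)}, \textbf{(R1a)}, \textbf{(R1b)}, \textbf{(R2a)} are formal from functoriality of $\uHH(-)$ and of the Tate twist. The projection-formula-type relations \textbf{(R2b)}, \textbf{(R2c)} follow from (G3b) and (G2a). The hard relations are proved by choosing, for the given field data, a single smooth model on which all the geometry (the finite map of \textbf{(R1c)}, the valuations of \textbf{(R3b)}, the prime element and unit of \textbf{(R3d)}--\textbf{(R3e)}) is simultaneously defined and in good position — this is where perfectness of $k$ and the left-filtering property of $\model(-/k)$ are used — and then invoking the corresponding geometric compatibility already recorded in \cite{Deg8}: \textbf{(R1c)} from the base-change formula (G2d) for finite morphisms combined with remark \ref{rem:comput_r_times}; \textbf{(R3a)} from the multiplicity formula (G1b); \textbf{(R3b)} from (G2b) (in the topologically-cartesian form, remark \ref{rem:G2b_plus}) applied to the normalization; \textbf{(R3c)} from the fact that a residue along a divisor not meeting the closed stratum vanishes, i.e. from the excision in the Gysin triangle; \textbf{(R3d)} from the trace formula \eqref{eq:trace_formula} together with Proposition \ref{prop:residues_units}(2); and \textbf{(R3e)} from Proposition \ref{prop:residues_units}(1). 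The delicate bookkeeping is that every identity must hold not on the nose but in the pro-category, i.e. after passing to a common refinement of models — so each verification is "there exists a smooth model on which the two sides agree", and one checks this is compatible with the filtered colimit defining morphisms in $\pro{(\smod\HH)}$. Once all relations are checked, the universal property of the presentation of $\ecorps$ by generators and relations yields the desired additive functor $\uHH^{(0)}$, and additivity on objects $(E,n) \oplus (E,m)$ is immediate since $\uHH^{(0)}$ is built termwise in the twist.
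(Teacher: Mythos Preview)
Your overall plan matches the paper's: build $\uHH^{(0)}$ on the generators (D1)--(D4) via natural functoriality, the Gysin morphism, cup product, and the residue map of the Gysin triangle, then verify (R0)--(R3e) using the compatibilities of section~\ref{sec:recall_Gysin}. For (D3) there is a methodological difference worth noting: you propose to iterate the unit operation $\gamma_u$ of \eqref{eq:action_units} and check the Steinberg relation in $\smod\HH$ by hand, whereas the paper uses the known isomorphism \eqref{eq:cohm&Milnor} between $K_n^M(E)$ and $\varinjlim_X \HH^{n,n}(X)$, so that any $\sigma\in K_n^M(E)$ already corresponds to a pro-class in motivic cohomology and $\gamma_\sigma$ is simply cup product with that class via \eqref{eq:product}. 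This bypasses Steinberg altogether.

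There is, however, a genuine gap in your treatment of (R2b) and especially (R2c). You claim these follow from (G3b), but (G3b) only tells you (in the language of (R2c)) that $f_*\circ \gamma_y \circ f^*$ equals cup product with the \emph{motivic} Gysin transfer of $y$; relation (R2c) asks for $\gamma_{\varphi^*(y)}$ where $\varphi^*(y)$ is the \emph{Milnor} norm. The bridge between the two is Lemma~\ref{lm:mot_coh_Milnor}: the isomorphism $\epsilon_E\colon K_*^M(E)\to\hat\pi_0(\HH)_*(E)$ is a ring map (giving (R0)) compatible with pushforward (giving (R2a), (R2b)) and with norms (giving (R2c)). Part (3), the norm compatibility, is not formal; its proof reduces to monogenic extensions and uses the trace formula \eqref{eq:trace_formula}. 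So the trace formula enters here, not in (R3d) as you suggest --- (R3d) follows from Proposition~\ref{prop:residues_units}(2) alone. Whichever route you take for (D3), you cannot close (R2c) without this lemma or an equivalent computation, and your plan does not account for it.
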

Note this theorem follows directly from our previous work
 on generic motives \cite{Deg5} when $k$
 admits resolution of singularities because the adjunction
 \eqref{eq:Hmod&DM} is then an isomorphism.
However, we give a proof
 (see paragraphs \ref{num:pf_thm:generic_smod_(D)}
  and \ref{num:relations}) 
 which avoids this assumption.
 It uses the same arguments than \cite{Deg5} after
 a generalisation of its geometric constructions 
 to the category $\smod \HH$ 
 (see section \ref{sec:recall_Gysin}).
 But let us first
  state the corollary which motivates the previous result:
\begin{cor} \label{cor:hmod&modl}
Let $F_*$ be an orientable homotopy module.

Then one associates with $F_*$ a canonical cycle premodule
$\hat F_*:\ecorps \rightarrow \ab$ defined on an object
$(E,n)$ of $\ecorps$ by the formula:
$$
\hat F_*(E,n)=\ilim{X \in \model(E/k)} F_{-n}(X).
$$
This defines a functor:
 $\tilde \rho':\hmor \rightarrow \pmodl$.
\end{cor}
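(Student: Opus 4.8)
The plan is to combine Theorem \ref{thm:generic_smod} with the functor $\varphi_F$ of \eqref{eq:hmod&smod_HH} attached to an orientable homotopy module $F_*$. First I would apply $\varphi_F$ to the functor $\uHH^{(0)}:\ecorps^{op} \rightarrow \pro{(\smod \HH)}$ of Theorem \ref{thm:generic_smod}, after extending $\varphi_F$ to pro-objects by the usual formula $\varphi_F((M_i)_{i \in \cI}) = \ilim{i \in \cI^{op}} \varphi_F(M_i)$; this is legitimate because $\varphi_F$ is a cohomological functor and $\pro{(\smod \HH)}$ is built from the filtered category $\model(E/k)$, which is left filtering by \ref{num:smooth_model}. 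The composite is a covariant additive functor $\hat F_*:\ecorps \rightarrow \ab$, and by the computation $\varphi_F(\uHH(X)(n)[n]) = F_{-n}(X)$ recorded just before \ref{num:smooth_model}, one gets exactly
$$
\hat F_*(E,n) = \varphi_F\big(\uHH^{(0)}(E,n)\big) = \ilim{X \in \model(E/k)} \varphi_F\big(\uHH(X)(-n)[-n]\big) = \ilim{X \in \model(E/k)} F_{-n}(X),
$$
which is the asserted formula. Since a cycle premodule is by definition nothing but an additive covariant functor $\ecorps \rightarrow \ab$, this already shows $\hat F_*$ is a cycle premodule.

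Next I would check functoriality in $F_*$, i.e.\ that a morphism $F_* \rightarrow G_*$ in $\hmor$ induces a natural transformation $\hat F_* \rightarrow \hat G_*$ of cycle premodules, so that $F_* \mapsto \hat F_*$ is a functor $\tilde\rho':\hmor \rightarrow \pmodl$. This is formal: a morphism of orientable homotopy modules induces, via the construction of \eqref{eq:hmod&smod_HH} (and compatibility of $H(-)$ with morphisms, using Proposition \ref{prop:orientable_hmod&wmod_HH} to produce the $\HH$-module structures naturally), a natural transformation $\varphi_F \rightarrow \varphi_G$ of functors on $(\smod \HH)^{op}$; composing with the fixed functor $\uHH^{(0)}$ and passing to the pro-object filtered colimit yields the desired natural transformation. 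Additivity of $\tilde\rho'$ is clear since everything in sight ($H(-)$, the represented presheaf, filtered colimits) is additive.

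The main obstacle I anticipate is not the bookkeeping above but making sure the pro-object formalism is handled correctly: one must verify that $\varphi_F$ applied to $\uHH^{(0)}(E,n)$ depends only on the pro-object and not on a choice of presentation, and that the generators $\gamma_x$ of type (D3) in $\ecorps$ — which in Theorem \ref{thm:generic_smod} are built from the products \eqref{eq:action_units}, hence involve the (weak) $\HH$-module structure on $H(F_*)$ rather than just its underlying spectrum — are genuinely respected by $\varphi_F$. This is exactly where orientability of $F_*$ is used, via Proposition \ref{prop:orientable_hmod&wmod_HH}: without the $\HH$-module structure, the morphisms $\gamma_x$ would have no action on $\hat F_*$, so the heart of the verification is that $\varphi_F$, defined through \eqref{eq:strict_to_weak_mod} and the commutative diagram \eqref{eq:strict_to_weak_mod_ppty}, does see the $\smod \HH$-morphisms produced in Theorem \ref{thm:generic_smod}. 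Once that compatibility is in place, all the relations (R0)--(R3e) defining $\ecorps$ are automatically preserved because $\uHH^{(0)}$ is already a functor on $\ecorps^{op}$ satisfying them in $\pro{(\smod \HH)}$, and $\varphi_F$ merely transports them to $\ab$. The remaining verifications — that $\hat F_*$ is graded in the way required by a cycle premodule, and that the four types of generators induce the expected maps (restriction along field extensions from (D1), norm/transfer from (D2) using the trace-type constructions, residues from (D4) using the residue morphism $\partial_{X,Z}$) — are then routine consequences of applying $\varphi_F$ to the corresponding structural morphisms in $\uHH^{(0)}$.
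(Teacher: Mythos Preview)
Your proposal is correct and follows exactly the paper's approach: extend $\varphi_F$ to pro-objects by filtered colimits and compose with the functor $\uHH^{(0)}$ of Theorem \ref{thm:generic_smod}, noting that functoriality in $F_*$ is immediate. The paper's proof is a two-line version of what you wrote; your additional discussion of the (D3) generators and the role of orientability is accurate but unnecessary, since once $\varphi_F$ is a functor on $(\smod \HH)^{op}$ all the structural morphisms of $\uHH^{(0)}$ are transported automatically.
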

\begin{proof}
In fact, the functor $\varphi_F$ associated with $F_*$
 in \eqref{eq:hmod&smod_HH} admits an obvious
 extension $\bar \varphi_F$ to pro-objects of $\smod \HH$.
 One simply puts: $\hat F_*=\bar \varphi_F \circ \uHH^{(0)}$.
 This is obviously functorial in $F_*$.
\end{proof}

\begin{num} \label{num:pf_thm:generic_smod_(D)}
\underline{Functorialities}: \\
\textbf{(D1)} is induced by the natural functoriality. \\
\textbf{(D2)}: A finite extension fields $\varphi:E \rightarrow L$
 induces a morphism of pro-schemes $(\varphi):(L) \rightarrow (E)$
 such that
$$
(\varphi)=\pplim{i \in I} (f_i:Y_i \rightarrow X_i)
$$
where the $f_i$ are finite surjective morphisms,
 whose associated generic residual extension is $L/E$,
 and the transition morphisms in the previous formal projective limit
 are made by transversal squares (see \cite[5.2]{Deg5} for details). 
 One defines the map $\varphi_*:\uHH(E) \rightarrow \uHH(L)$
 corresponding to (D2) as the formal projective limit:
$$
\pplim{i \in I} \big(f_i^*:\uHH(X_i) \rightarrow \uHH(Y_i)\big)
$$
using the Gysin morphism \eqref{eq:Gysin_morph} and property (G2c). \\
\textbf{(D3)}: According to example \ref{ex:unramified_Milnor&HH},
for any function field $E/k$ and any integer $n \geq 0$,
\begin{equation} \label{eq:cohm&Milnor}
\Bigg(\ilim{X \in \model(E/k)} \HH^n(X)\Bigg) \simeq K_n^M(E).
\end{equation}
Thus any symbol $\sigma \in K_n^M(E)$ corresponds
to a morphism of pro-objects
$$
\uHH^{(0)}(E) \rightarrow \uHH(n)[n]
$$
still denoted by $\sigma$.
For any smooth model $X$ of $E/k$,
 we let $\sigma_X:\uHH^{(0)}(X) \rightarrow \uHH(n)[n]$
 the component of $\sigma$ corresponding to $X$.
We define $\gamma_\sigma$ as the formal projective limit:
$$
\pplim{X \in \model(E/k)} \big(\sigma_X \ecuppx X 1_{X*}\big)
$$
with the definition given by formula \eqref{eq:product}. \\
\textbf{(D4)}: Let $(E,v)$ be a valued function field
 with ring of integers $\cO_v$ and residue field $\kappa_v$.
There exists a smooth model $X$ of $\cO_v$ and a point $x \in X$ 
 of codimension $1$ corresponding to the valuation $v$.
Let $Z$ be the reduced closure of $x$ in $X$.
Given an open neighborhood $U$ of $x$ in $X$ such that $Z \cap U$ is smooth,
we can write the corresponding Gysin triangle \eqref{eq:Gysin_tri_smod} as
follows:
\begin{equation} \label{eq:Gysin_particular}
\uHH(Z \cap U)(1)[1] \xrightarrow{\partial_{U,Z \cap U}} \uHH(U-Z \cap U)
 \xrightarrow{j_*} \uHH(U) \rightarrow \uHH(Z \cap U)(1)[2]
\end{equation}
According to property (G1a), the morphism $\partial_{U,Z \cap U}$
is functorial with
respect to the open subscheme $U$. Taking the formal projective limit
of this morphism with respect to the neighborhoods $U$ as above,
we obtain the desired map:
$$
\partial_v:\uHH(\kappa_v)(1)[1] \rightarrow \uHH(E).
$$
\end{num}

\begin{rem} \label{rem:pre_R3c}
Note for future references that the triangle \eqref{eq:Gysin_particular}
 being distinguished, we get with the above notations:
  $j_* \circ \partial_{U,Z \cap U}=0$.
\end{rem}

\begin{num}
Before proving the relations,
 we note that one can apply the preceding construction to obtain
 maps in motivic cohomology. More precisely, given any function fields $E/k$,
 we put (as in corollary \ref{cor:hmod&modl}):
$$
\hat \pi_0(\HH)_n(E):=\ilim{X \in \model(E/k)} \HH^n(X).
$$
As $\HH^n(X)=\Hom_{\smod \HH}(\HH(X)(-n)[-2n],\HH)$,
 we thus obtain that
  for any extension (resp. finite extension) of
   function fields $\varphi:K \rightarrow E$,
 the map (D1) (resp. (D2)) induces a canonical morphism:
\begin{align*}
&\varphi_*:\hat \pi_0(\HH)_n(K) \rightarrow \hat \pi_0(\HH)_n(E), \\
\text{resp. } & \varphi^*:\hat \pi_0(\HH)_n(E) \rightarrow \hat \pi_0(\HH)_n(K).
\end{align*}
We will need the following lemma concerning these maps:
\end{num}
\begin{lm} \label{lm:mot_coh_Milnor}
Consider for any function field $E/k$ the isomorphism
 \eqref{eq:cohm&Milnor} of graded abelian groups:
$$
K_*^M(E) \xrightarrow{\epsilon_E} \hat \pi_0(\HH)_*(E).
$$
Then, the following properties hold:
\begin{enumerate}
\item $\epsilon_E$ is an isomorphism of graded algebras.
\item For any morphism $\varphi:E \rightarrow L$ of function fields,
 the following square is commutative:
$$
\xymatrix@=24pt{
K_*^M(E)\ar^-{\epsilon_E}[r]\ar_{\varphi_*}[d]
 & \hat \pi_0(\HH)_*(E)\ar^{\varphi_*}[d] \\
K_*^M(L)\ar^-{\epsilon_L}[r] & \hat \pi_0(\HH)_*(L)
}
$$
where the left (resp. right) vertical map corresponds 
 to the standard functoriality of Milnor K-theory
 (resp. data (D1)).
\item For any finite morphism $\varphi:E \rightarrow L$ of function fields,
 the following square is commutative:
$$
\xymatrix@=24pt{
K_*^M(E)\ar^-{\epsilon_E}[r] & \hat \pi_0(\HH)_*(E) \\
K_*^M(L)\ar^-{\epsilon_L}[r]\ar^{\varphi^*}[u]
  & \hat \pi_0(\HH)_*(L)\ar_{\varphi^*}[u]
}
$$
where the left (resp. right) vertical map corresponds
 to the standard functoriality of Milnor K-theory
 (resp. data (D2)).
 \end{enumerate}
\end{lm}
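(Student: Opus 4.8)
The plan is to prove Lemma \ref{lm:mot_coh_Milnor} by reducing everything to the unstable comparison between motivic cohomology in bidegree $(n,n)$ and Milnor K-theory, and then tracking how the geometric constructions of \ref{num:pf_thm:generic_smod_(D)} manifest on this small, concrete piece of the cohomology.

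\medskip

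\noindent\textbf{Part (1): multiplicativity.} First I would recall that the isomorphism $\epsilon_E$ in \eqref{eq:cohm&Milnor} is obtained as the colimit over smooth models $X \in \model(E/k)$ of the maps $\HH^n(X) = H^n_{\nis}(X, \underline{K}^M_n) \to \varinjlim_X H^n$, combined with Nesterenko--Suslin--Totaro's identification $H^n_{\nis}(E, \underline{K}^M_n) = K^M_n(E)$ for the field $E$ and Voevodsky's cancellation theorem. The point is that this identification is well known to be multiplicative: the product on $\HH^{*,*}$ restricted to the diagonal bidegrees $(n,n)$ corresponds, on the level of the sheaf $\underline{K}^M_*$, to the product of Milnor K-theory. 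Since the multiplication in data (D3) is built (via \eqref{eq:product}) from the ring structure on $\HH^{*,*}(X)$, the compatibility of $\gamma_\sigma$ with $\gamma_\tau$ under composition is just the associativity of that ring structure taken to the pro-limit. So (1) is essentially a citation of the multiplicativity of the Nesterenko--Suslin--Totaro isomorphism, passed to the limit over smooth models.

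\medskip

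\noindent\textbf{Part (2): covariance under (D1).} For an extension $\varphi : E \to L$, the induced map $(L) \to (E)$ of pro-schemes is given at finite level by flat pullback along $\spec B \to \spec A$ for $A \subset B$, and data (D1) is just the contravariant functoriality $\varphi^*$ of the cohomology presheaf $X \mapsto \HH^n(X)$. On the side of $\underline{K}^M_n$, this is the ordinary pullback of the Nisnevich sheaf, which on the generic points induces exactly the field-extension map $K^M_*(E) \to K^M_*(L)$. So the square in (2) commutes because $\epsilon$ is a morphism of presheaves (before taking colimits) and the colimit is compatible with the transition maps; there is really nothing to do beyond unravelling definitions.

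\medskip

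\noindent\textbf{Part (3): contravariance under (D2) — the main obstacle.} This is where the real content lies. For a finite extension $\varphi : E \to L$, the map $\varphi^*$ on $\hat\pi_0(\HH)_*$ is defined in \textbf{(D2)} of \ref{num:pf_thm:generic_smod_(D)} via the Gysin morphisms $f_i^*$ of \eqref{eq:Gysin_morph} attached to finite surjective maps $f_i : Y_i \to X_i$ with generic residual extension $L/E$. The task is to show that the transfer $\varphi^*$ obtained this way agrees, on the bidegree $(n,n)$ / Milnor K-theory part, with the classical Milnor K-theory transfer (norm) map — or, more precisely, with the transfer on the sheaf $\underline{K}^M_*$, which by \cite{Deg5} is the one induced by finite correspondences. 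The plan is: reduce to the case where $f : Y \to X$ is finite and factors as $Y \xrightarrow{i} P \xrightarrow{p} X$ with $P/X$ a projective bundle of relative dimension one and $i$ a closed immersion (the general case follows by additivity, using Lemma \ref{lm:add_Gysin}, and by the behaviour of everything in sight under further localization to get into this "good" situation); then use the explicit description of $f^*$ from remark \ref{rem:comput_fdl_class} as $\uHH(X)(n)[2n] \xrightarrow{\mathfrak l_n(P)} \uHH(P) \xrightarrow{i^*} \uHH(Y)(n)[2n]$, together with the trace formula \eqref{eq:trace_formula} $f_*(1) = d$. I expect the key computation to be that on $\HH^{*,*}$ this composite induces precisely the projection formula characterization of the Milnor norm: $\varphi^*(\{u_1,\dots,u_n\})$ for units $u_j \in E^\times$ equals $\{N_{L/E}(u_1'), u_2, \dots, u_n\}$-type expressions, which by Kato's theorem (the transfer on $K^M_*$ is characterized by the projection formula, compatibility with residues, and the trace formula in degree $0$) pins it down uniquely. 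So the strategy is to verify these characterizing properties for the geometrically-defined $\varphi^*$ — the trace formula is already \eqref{eq:trace_formula}, the projection formula follows from (G3b), and compatibility with residues follows from (G2b) — and conclude by uniqueness of the Milnor transfer. The hard part will be bookkeeping the twists and the passage to the pro-limit carefully enough that the comparison is literally an equality of the two maps $K^M_*(L) \to K^M_*(E)$, rather than merely an isomorphism of abstract transfer structures.
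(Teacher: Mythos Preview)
Your treatment of parts (1) and (2) matches the paper's: both are direct citations of the multiplicativity and naturality of the Nesterenko--Suslin--Totaro / Suslin--Voevodsky isomorphism (the paper cites \cite[th.~3.4]{SV_BK}).

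For part (3) you take a genuinely different route from the paper. The paper follows the argument of \cite[lem.~3.4.1, 3.4.4]{SV_BK}: it proves an inner lemma consisting of (4) the projection formula (from (G3b)), (5) the degree formula $\varphi^*\varphi_*=[E:K]\cdot\mathrm{id}$ (reduced via (G2a) to a monogenic extension and then to the trace formula \eqref{eq:trace_formula}), (6) the base change formula (from (G2d)), and (7) a Galois descent formula (from (6)). Point (7) plus injectivity of $\psi_*$ in degree~1 gives the comparison in degree~1; then (4) and (5) bootstrap it to all degrees exactly as in \cite{SV_BK}. Your proposal instead aims to pin down the transfer by verifying the axiomatic characterization of the Milnor norm (trace in degree~0, projection formula, residue compatibility) and invoking its uniqueness (Bass--Tate/Kato). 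Both strategies are viable; the paper's has the advantage of being a straight citation once (4)--(7) are checked, whereas yours avoids the Galois argument but requires you to be precise about which uniqueness statement you are using --- the standard one needs the norm in degree~1 (the field norm $L^\times\to E^\times$), not just the degree-0 trace, and ``compatibility with residues from (G2b)'' does not by itself give you Weil reciprocity on $K^M_*(E(t))$ without further work. If you flesh that step out (or simply verify degree~1 directly, which is close to what the paper's point (7) does), your argument goes through.
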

\begin{proof}
Assertions (1) and (2) follow precisely from \cite[th. 3.4]{SV_BK}.
The verification of (3) is not easy due to the abstract nature
 of the Gysin morphism used to define (D2).
However, we can follow the argument of \cite{SV_BK}, lemmas 3.4.1
 and 3.4.4: this means we are reduced to prove the following formulas:
\begin{lm}
Let $\varphi:K \rightarrow E$
 and $\psi:K \rightarrow L$
 be finite extensions of fields.
 Let $[E:K]$ (resp. $[E:K]_i$) be the degree (resp. inseparable degree)
 of $E/K$.
 Then for any elements $x \in \hat \pi_0(\HH)_n(E)$ and
 $y \in \hat \pi_0(\HH)_n(K)$, the following formulas hold:
\begin{enumerate}
\item[(4)] $\varphi^*(x.\varphi_*(y))=\varphi^*(x).y$,
 $\varphi^*(\varphi_*(y).x)=y.\varphi^*(x)$.
\item[(5)] $\varphi^*\varphi_*(x)=[E:K].x$.
\item[(6)] Put $R=E \otimes_K L$. Then, 
 $\psi_*\varphi^*(x)=\sum_{z \in \spec R}
 \mathrm{lg}(R_z).\bar \varphi_{z}^*\bar \psi_{z*}(x)$,
 with the notations of (R1c).
\item[(7)] Assume $L/K$ is normal. Then,
 $\psi_*\varphi^*(x)=[E:K]_i.\sum_{j \in \Hom_K(E,L)} j_*(x)$.
\end{enumerate}
\end{lm}
In fact, point (7) implies point (3) for the graded part of degree $1$
 because according to point (2),
  $\psi_*:\pi_0(\HH)_1(K) \rightarrow \pi_0(\HH)_1(L)$
  is injective.
Then the proof of \cite[lem. 3.4.4]{SV_BK} allows to deduce
point (3): this proof indeed uses only the preceding fact together
with points (4) and (5).

Let us now prove the preceding lemma.
Points (4) (resp. (6)) follow from the definition of (D2)
 and property (G3b) (resp. (G2d)).
 Point (7) is then an easy consequence of (6) using elementary
 Galois theory. The difficult part is point (5).

According to point (4), we are reduced to prove that
 $\varphi^*(1)=[E:K]$.
Because of property (G2a) of the Gysin morphism,
 we reduce to the case where $E/K$ is generated by a single element.
 In other words, $E=K[t]/(P)$ where $P$ is a polynomial with coefficients in $t$.
 Thus we can find a smooth model $X$ (resp. $Y$) of $K/k$ (resp. $E/k$)
  and a finite surjective morphism $f:Y \rightarrow X$ 
  whose generic residual extension is $E/K$
 and which factors as:
$$
Y \xrightarrow i \PP^1_X \xrightarrow p X
$$
where $p$ is the canonical projection
 and $i$ is a closed immersion of codimension $1$.
 Thus point (5) now follows from the definition of (D2)
 and the trace formula \eqref{eq:trace_formula}.
\end{proof}

\begin{num} \label{num:relations}
\underline{Relations}: \\
The relations follow from the preparatory work done in section \ref{sec:recall_Gysin},
 according to the following table:
\begin{center}
\begin{tabular}{|r|l||r|l|}
\hline
 (R0) & \ref{lm:mot_coh_Milnor}(1)
& (R1a) & \text{Obvious.} \\
\hline
(R1b) & (G2a)
& (R1c) & (G2d) \\
\hline
(R2a) & (G3a)+lem. \ref{lm:mot_coh_Milnor}(2)
& (R2c) & (G3b)+lem. \ref{lm:mot_coh_Milnor}(2) \\
\hline
(R2c) & (G3b)+lem. \ref{lm:mot_coh_Milnor}(3)
& (R3a) & (G1b) \\
\hline
(R3b) & (G2b)
& (R3c) & rem. \ref{rem:pre_R3c} \\
\hline
(R3d) & prop. \ref{prop:residues_units}(2)
& (R3e) & prop. \ref{prop:residues_units}(1) \\
\hline
\end{tabular}
\end{center}
\end{num}
\noindent This concludes the proof of \ref{thm:generic_smod}.

\subsection{The associated cycle module}

\begin{num} \label{num:e_c_smodHH}
Using the obvious extension of the functor \eqref{eq:sh->Hmod} to pro-objects,
 the homotopy coniveau exact couple induces a pro-triangulated exact couple
 in $\pro{(\smod \HH)}$ whose graded terms are:
\begin{equation} \label{eq:e_c_smodHH}
\begin{split}
\uHH(F_p(X))[-p-q]&=\pplim{Z^* \in \drap X} \uHH(X-Z_p)[-p-q] \\
\uHH(Gr_p(X))[-p-q]&=\pplim{Z^* \in \drap X} \uHH(X-Z_p/X-Z_{p-1})[-p-q]
\end{split}
\end{equation}
Given a smooth closed subscheme $Z \subset X$
 of pure codimension $p$, if we apply \cite[prop. 4.3]{Deg8}
 to the closed pair $(X,Z)$ according to paragraph \ref{Deg8applies},
 we get a canonical isomorphism:
\begin{equation} \label{eq:purity}
\mathfrak p_{X,Z}:\uHH(X/X-Z) \rightarrow \uHH(Z)(p)[2p].
\end{equation}
Using this isomorphism, we can easily obtain
 the analog of \cite[3.11]{Deg6} in the setting of $\HH$-modules:
\end{num}
\begin{prop} \label{prop:comput_E_coniv}
Consider the above notations. Let $p \in \ZZ$ be an integer
 and denote by $X^{(p)}$ the set of
 codimension $p$ points of $X$.\footnote{
 Conventionally, it is empty if $p<0$ or $p>\dim(X)$.}
 Then there exists a canonical isomorphism:
$$
\uHH(Gr_p^M(X)) \xrightarrow{\quad \epsilon_p \quad }
 \pprod{x \in X^{(p)}} \uHH(\kappa(x))(p)[2p].
$$
\end{prop}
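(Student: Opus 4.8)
The plan is to follow the proof of the analogous statement \cite[3.11]{Deg6} in $\DM$, transporting it to $\smod\HH$ by means of the purity isomorphism \eqref{eq:purity} and the additivity of the Gysin triangle recorded in Lemma \ref{lm:add_Gysin}; both are available for $\smod\HH$ precisely because this category satisfies the axioms of \cite{Deg8} (paragraph \ref{Deg8applies}).

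First I would unwind the pro-object $\uHH\big(Gr_p(X)\big)=\pplim{Z^*\in\drap X}\uHH(X-Z^p/X-Z^{p-1})$ of \eqref{eq:e_c_smodHH}. Writing $U_p=X-Z^p$ and $U_{p-1}=X-Z^{p-1}$, the complement of the open immersion $U_{p-1}\hookrightarrow U_p$ is the closed subset $W_{Z^*}:=Z^{p-1}-Z^p$, so that $\uHH(X-Z^p/X-Z^{p-1})=\uHH\big(U_p/(U_p-W_{Z^*})\big)$. Since $Z^{p-1}$ has codimension $>p-1$ whereas $Z^p$ has codimension $>p$ in $X$, every generic point of $W_{Z^*}$ has codimension exactly $p$; and as $k$ is perfect, generic smoothness shows that the flags $Z^*$ for which $W_{Z^*}$ is moreover smooth of pure codimension $p$ in $U_p$ form a cofinal family in $\drap X$. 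For such a flag, $W_{Z^*}$ is the disjoint union of its connected components, one smooth open neighbourhood $W_x$, inside $\overline{\{x\}}$, of each point $x$ in its (finite) set of generic points $S_{Z^*}\subset X^{(p)}$ — in other words a smooth model of $\kappa(x)$ — and refining the flag makes $S_{Z^*}$ exhaust $X^{(p)}$ while each $W_x$ runs through a cofinal family of smooth models of $\kappa(x)$.

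For each such flag I would apply the purity isomorphism \eqref{eq:purity} to the closed pair $(U_p,W_{Z^*})$, combined with additivity (Lemma \ref{lm:add_Gysin}), to obtain a canonical isomorphism
$$
\uHH\big(U_p/(U_p-W_{Z^*})\big)\ \xrightarrow{\ \sim\ }\ \bigoplus_{x\in S_{Z^*}}\uHH(W_x)(p)[2p].
$$
Passing to the formal projective limit over $\drap X$, the left-hand side becomes $\uHH\big(Gr_p(X)\big)$ while the right-hand side — a formal colimit of finite direct sums indexed by the growing sets $S_{Z^*}$, whose summands $\uHH(W_x)(p)[2p]$ assemble, in the limit over the models $W_x$, into $\uHH(\kappa(x))(p)[2p]$ — becomes exactly $\pprod{x\in X^{(p)}}\uHH(\kappa(x))(p)[2p]$; this defines $\epsilon_p$. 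As purity and additivity are isomorphisms at each finite stage, $\epsilon_p$ is an isomorphism of pro-objects, and its independence of the intermediate choices (hence the naturality of $\epsilon_p$) follows from the compatibility of the purity isomorphism with open immersions and from property (G1a), exactly as in \cite[3.11]{Deg6}.

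The only genuinely delicate point I expect is the pro-categorical bookkeeping: verifying that the flags with $W_{Z^*}=Z^{p-1}-Z^p$ smooth of pure codimension $p$ are cofinal in $\drap X$, and that the formal colimit of the finite sums $\bigoplus_{x\in S_{Z^*}}\uHH(W_x)(p)[2p]$ is literally the formal product $\pprod{x\in X^{(p)}}\uHH(\kappa(x))(p)[2p]$. This is a pure manipulation of the ordered set $\drap X$, identical to the one already carried out in \cite[3.11]{Deg6}, so I would simply invoke it rather than reproduce it.
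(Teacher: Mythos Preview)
Your proposal is correct and matches the paper's approach exactly: the paper itself gives no proof but simply refers to \cite[3.11]{Deg6}, noting that the same argument goes through in $\smod\HH$ once one uses the purity isomorphism \eqref{eq:purity} --- which is precisely what you have spelled out in detail. (One small imprecision: it is not true for an \emph{arbitrary} flag that every generic point of $W_{Z^*}=Z^{p-1}-Z^p$ has codimension exactly $p$, since $Z^{p-1}$ may have components of codimension $>p$ not contained in $Z^p$; but this is harmless, as you only use the statement for the cofinal family of flags you correctly identify.)
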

In particular, for any point $x \in X^{(p)}$ we get a canonical projection map:
\begin{equation} \label{eq:proj_e_c}
\pi_x:\uHH(Gr_p(X)) \rightarrow \uHH(\kappa(x))(p)[2p].
\end{equation}
For the proof, we refer the reader to the proof of \cite[3.11]{Deg6}
 --- the same proof works in our case
  if we use the purity isomorphism \eqref{eq:purity}.

\num \label{df:differentielles}
Let $X$ be a scheme essentially of finite type over $k$
 and consider a couple $(x,y) \in X^{(p)} \times X^{(p+1)}$.

Assume that $y$ is a specialization of $x$.
Let $Z$ be the reduced closure of $x$ in $X$
 and $\tilde Z \xrightarrow f Z$ be its normalization. 
Each point $t \in f^{-1}(y)$ corresponds to a
discrete valuation $v_t$ on $\kappa_x$ with residue 
field $\kappa_t$.
We denote by $\varphi_t:\kappa_y \rightarrow \kappa_t$ the 
morphism induced by $f$. Then, we define the following morphism 
of pro-$\HH$-modules:
\begin{equation} \label{eq:residues_pt}
d_y^x=
\sum_{t \in f^{-1}(y)}
 \partial_{v_t} \circ \varphi_{t*}:
\uHH(\kappa_y)(1)[1]
  \rightarrow \uHH(\kappa_x)
\end{equation}
using the notations of paragraph \ref{num:pf_thm:generic_smod_(D)}.
If $y$ is not a specialization of $x$,
  we put: $\partial_y^x=0$.
\begin{prop}
\label{compute_differentials}
Consider the above hypothesis and notations.
If $X$ is smooth then the following diagram is commutative:
$$
\xymatrix@C=45pt{
\uHH(Gr_{p+1}(X))\ar^{d_{p+1,-p-1}}[r]\ar_{\pi_y}[d]
 & \uHH(Gr_p^M(X))[1]\ar^{\pi_x}[d] \\
\uHH(\kappa_y)(p+1)[2p+2]
 \ar^-{d_y^x}[r]
  & \uHH(\kappa_x)(p)[2p+1]
}
$$
where the vertical maps are defined in \eqref{eq:proj_e_c}
 and the map $d_{p+1,-p-1}$ in the differential associated with
 the pro-triangulated exact couple \eqref{eq:e_c_smodHH}.
\end{prop}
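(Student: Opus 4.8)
The plan is to transpose to $\pro{(\smod \HH)}$ the proof of the corresponding computation for the motivic coniveau exact couple in \cite{Deg6} (see also \cite{Deg5}); every geometric ingredient of that argument is now available over $\smod\HH$ thanks to the recollections of Section~\ref{sec:recall_Gysin}. Via Proposition~\ref{prop:comput_E_coniv}, which exhibits $\uHH(Gr_{p+1}(X))$ as a formal product over $X^{(p+1)}$, the displayed square amounts to the assertion that the component of $d_{p+1,-p-1}$ joining the summand indexed by $y$ to the summand indexed by $x$ is exactly the morphism $d_y^x$ of \eqref{eq:residues_pt}.

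First I would reduce to a local statement near $y$. By construction $d_{p+1,-p-1}=\beta\circ\gamma$ is the formal projective limit over $\drap X$ of the composites of the connecting morphism $\gamma$ and the restriction morphism $\beta$ attached to the localization triangles \eqref{eq:pre_coniv_ec}; since $\pi_x$ and $\pi_y$ retain only the summands indexed by $x$ and by $y$, I may restrict to a cofinal family of flags whose term $Z^{p-1}$ contains the reduced closure $\bar Z$ of $x$, and replace $X$ by an arbitrarily small open neighbourhood of $y$. If $y$ is not a specialization of $x$, then $\bar Z$ avoids a neighbourhood of $y$, the component in question vanishes, and there is nothing to prove (this matches the convention $d_y^x=0$); so I assume $y\in\bar Z$, i.e.\ that $\bar Z$ has codimension one at $y$. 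Unwinding $\pi_x$ and $\pi_y$ through Proposition~\ref{prop:comput_E_coniv} and the purity isomorphisms \eqref{eq:purity}, and using the naturality of the Gysin triangle with respect to open immersions, the component in question is identified with a morphism $\uHH(\kappa_y)(p+1)[2p+2]\to\uHH(\kappa_x)(p)[2p+1]$ of residue type attached to the codimension-one point $y$ of $\bar Z$. Were $\bar Z$ smooth at $y$, this would literally be a connecting morphism $\partial$ of a Gysin triangle \eqref{eq:Gysin_tri_smod}, i.e.\ a morphism $\partial_v$ as constructed in \ref{num:pf_thm:generic_smod_(D)}(D4); the whole difficulty is that $\bar Z$ need not be smooth at $y$.

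Handling this non-smoothness is the main obstacle, and I would do it by passing to the normalization $f\colon\tilde Z\to\bar Z$. Over a suitable neighbourhood of $y$ it is finite and birational; and since $\tilde Z$ is normal of dimension $p+1$, its local rings at the points $t\in f^{-1}(y)$ (all of codimension one) are discrete valuation rings, so that after shrinking once more $\tilde Z$ is regular --- hence smooth over the perfect field $k$ --- in a neighbourhood of $f^{-1}(y)$, each $t$ being the generic point of a smooth divisor. This furnishes the valued function fields $(\kappa_x,v_t)$, where $\kappa_x=k(\bar Z)=k(\tilde Z)$ and $v_t$ is the valuation of $\cO_{\tilde Z,t}$, with residue field $\kappa_t=\kappa(t)$ and residual extension $\varphi_t\colon\kappa_y\to\kappa_t$ induced by $f$. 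I would then compute the residue-type morphism above by reducing, along $f$, to the genuine residue morphisms $\partial_{v_t}$ of \eqref{eq:residues_pt}, which live on the smooth scheme $\tilde Z$. This comparison rests on the compatibility of the Gysin and residue morphisms with the pushforward $f_*$ --- properties (G1a) and (G1b), the latter accounting for the multiplicities of the possibly non-reduced scheme-theoretic fibre $f^{-1}(y)$ --- together with (G2a) and with property (G2b) applied to the merely topologically cartesian squares that arise, which is legitimate by Remark~\ref{rem:G2b_plus}. One thereby obtains that the residue-type morphism equals $\sum_{t\in f^{-1}(y)}\partial_{v_t}\circ\varphi_{t*}$, i.e.\ $d_y^x$. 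Keeping these multiplicities and the various purity identifications consistent through the normalization is the delicate bookkeeping of the proof; the remaining steps are formal manipulations of distinguished triangles in $\pro{(\smod\HH)}$.
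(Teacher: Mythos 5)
Your proposal takes the same route as the paper: the paper's entire proof consists of the sentence ``The proof is the same as the one of [D\'eglise, \emph{Around the Gysin triangle II}, 3.13]'' followed by a correspondence table translating the ingredients of that proof (Prop.~1.36, Th.~1.34 rel.~(2), Prop.~2.13, Prop.~2.9) into the $\smod\HH$-analogues used here (Lemma~\ref{lm:add_Gysin}, (G2b), Remark~\ref{rem:G2b_plus}, (G2a)). Your sketch reconstructs the content of that argument correctly: the reduction via Proposition~\ref{prop:comput_E_coniv} to identifying the $(x,y)$-component, the localization to a neighbourhood of $y$, the vanishing when $y$ is not a specialization of $x$, the identification of the non-smoothness of $\bar Z$ at $y$ as the main obstacle, and the resolution via the normalization $\tilde Z\to\bar Z$ (smooth near $f^{-1}(y)$ because $k$ is perfect). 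The only minor divergence is in the list of tools: you cite (G1a) and (G1b), whereas the paper's translation table instead flags the additivity Lemma~\ref{lm:add_Gysin} (which you use implicitly but do not name when you split $Gr_{p+1}(X)$ into its $y$-indexed component) and does not list (G1b); also, your claim that (G1b) is needed to account for multiplicities of $f^{-1}(y)$ is dubious, since the formula \eqref{eq:residues_pt} for $d_y^x$ carries no multiplicities --- the normalization is birational and what is actually invoked is (G2b) through merely topologically cartesian squares, exactly as covered by Remark~\ref{rem:G2b_plus}. These are peripheral; the strategy is the right one.
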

\noindent The proof is the same as the one of \cite[3.13]{Deg6}
 once we use the following correspondence
 table for the results used in it:
\begin{center}
\begin{tabular}{|c|c|}
\hline
\cite[3.13]{Deg6} & \hspace{0.1cm} Proposition \ref{compute_differentials} \hspace{0.1cm} \\
\hline
Proposition 1.36 & Lemma \ref{lm:add_Gysin} \\
Theorem 1.34, relation (2) & (G2b) \\
Proposition 2.13 & Remark \ref{rem:G2b_plus} \\
Proposition 2.9 & (G2a) \\
\hline
\end{tabular}
\end{center}

\begin{num}
Consider an orientable homotopy module $F_*$,
 a smooth scheme $X$ and an integer $n \in \ZZ$.
 Put $\F=H(F_*)$ (using the functor of \eqref{eq:hmod_SH}). 

The coniveau spectral sequence
 \eqref{eq:coniv_ssp} associated with $\F(n)[n]$ has
 the following shape:
$$
E_1^{p,q}(X,\F(n)[n]) \Rightarrow H^{p+q}_\nis(X,F_n),
$$
where we have used the isomorphism \eqref{eq:coh_hmod} to identify
the abutment.

Consider also the commutative diagram \eqref{eq:hmod&smod_HH_ppty}
 and the obvious extension of $\bar \varphi_F^0$ (resp. $\bar \varphi_F$)
to pro-objects:
$$
\bar \varphi_F^0:\pro{\SH}^{op} \rightarrow \ab,
\text{ resp. } \bar \varphi_F:\pro{(\smod \HH)}^{op} \rightarrow \ab.
$$
Then the previous spectral sequence is defined by the exact couple
$$
\big(\bar \varphi_F^0(D_{p,q}),\bar \varphi_F^0(E_{p,q})\big)
=\big(\bar \varphi_F(L_\HH(D_{p,q})),\bar \varphi_F(L_\HH(E_{p,q}))\big).
$$
Thus, proposition \ref{prop:comput_E_coniv} gives a canonical
 isomorphism:
$$
E_1^{p,q}(X,\F(n)[n]) \xrightarrow{\epsilon_p^*} 
\begin{cases}
C^p(X,\hat F_*)_n & \text{if } q=0, \\
0 & \text{otherwise,}
\end{cases}
$$
where $\hat F_*$ is the cycle premodule associated with $F_*$.
Consider moreover, a couple $(x,y) \in X^{(p)} \times X^{(p+1)}$.
Comparing formula \eqref{eq:residues_pt_modl} with formula
 \eqref{eq:residues_pt}, proposition \ref{compute_differentials}
 gives the following commutative diagram:
$$
\xymatrix@C=40pt@R=20pt{
E_1^{p,0}(X,\F(n)[n])\ar^-*{d_1^{p,0}}[r] & E_1^{p+1,0}(X,\F(n)[n]) \\
\hat F_*(\kappa(x))\ar^{\partial_y^x}[r]\ar[u] & \hat F_*(\kappa(y))\ar[u]
}
$$
where the vertical maps are the canonical injections.
In particular:
\end{num}
\begin{cor} \label{cor:comput_coniv_ssp1}
Consider the previous notations.
\begin{enumerate}
\item The cycle premodule $\hat F_*$ satisfies properties
 \pFD X and \pC X.
\item There is a canonical isomorphism of complexes:
$$
E_1^{*,0}(X,\F(n)[n]) \simeq C^*(X;\hat F_*)_n.
$$
\item For any integer $p \in \ZZ$, there is a canonical isomorphism:
\begin{equation*} \label{eq:hmod&modl_coh&Chow}
H^p_\nis(X,F_n) \simeq A^p(X;\hat F_*)_n
\end{equation*}
where the right hand side is the $p$-th cohomology of the complex
 $C^*(X;\hat F_*)_n$ (notation of paragraph \ref{num:df_Chow_gp}).
\item The cycle premodule $\hat F_*$ is a cycle module.
\end{enumerate}
\end{cor}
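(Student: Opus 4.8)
The plan is to read off all four assertions from the coniveau spectral sequence $E_1^{p,q}(X,\F(n)[n])$ already analysed in the two preceding propositions, exploiting that it is a genuine spectral sequence of abelian groups.

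First I would establish (1). By Proposition~\ref{prop:comput_E_coniv}, applied after the representable functor $\bar\varphi_F$ exactly as in the paragraph preceding the corollary, the $E_1$-page vanishes off the row $q=0$, and on that row the maps $\epsilon_p^*$ identify $E_1^{p,0}(X,\F(n)[n])$ with the abelian group $C^p(X;\hat F_*)_n$. Being a spectral sequence of abelian groups, its $d_1$-differential is an honest homomorphism $C^p(X;\hat F_*)_n \to C^{p+1}(X;\hat F_*)_n$ with $d_1\circ d_1=0$; and Proposition~\ref{compute_differentials}, via the commutative square displayed just before the corollary, identifies the component of $d_1$ from the summand $\hat F_*(\kappa_x)$ to the summand $\hat F_*(\kappa_y)$ with the Rost residue $\partial_y^x$ of \eqref{eq:residues_pt_modl}. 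Since the target $C^{p+1}(X;\hat F_*)_n$ is a \emph{direct sum} and $d_1$ is well defined, for every $x\in X^{(p)}$ and every $\rho\in\hat F_*(\kappa_x)$ only finitely many $\partial_y^x(\rho)$ are nonzero, which is precisely \pFD X. With \pFD X in force the differential $d_X^p$ of \eqref{eq:diff_modl} makes sense, and by the same termwise comparison it corresponds to $d_1$ under $\epsilon_\bullet^*$; hence $d_X^{p+1}\circ d_X^p$ transports to $d_1\circ d_1=0$, giving \pC X.

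Then (2) is immediate: by Proposition~\ref{compute_differentials} the family $(\epsilon_p^*)_p$ intertwines $d_1$ with $d_X^*$, so it is an isomorphism of complexes $E_1^{*,0}(X,\F(n)[n])\simeq C^*(X;\hat F_*)_n$. For (3) I would use that the coniveau spectral sequence is concentrated in the band $0\le p\le\dim X$ (Example~\ref{eq:general_coniv_ssp}), hence convergent, and, being carried by the single row $q=0$, degenerates at $E_2$. Therefore the abutment in total degree $p$, which by \eqref{eq:coh_hmod} is $H^p_\nis(X,F_n)$, equals $E_2^{p,0}$, i.e. the $p$-th cohomology of $E_1^{*,0}(X,\F(n)[n])$, which by (2) is $A^p(X;\hat F_*)_n$.

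Finally, for (4) I would invoke Lemma~\ref{lm:reduce_modl}: applying part (1) to the smooth schemes $\AA^m_k$ for all $m\ge 0$ yields \pFD{\AA^m} and \pC{\AA^m} for every $m$, and the equivalences (ii)$\Leftrightarrow$(i) and (iv)$\Leftrightarrow$(iii) of that lemma upgrade this to \pFD X and \pC X for arbitrary $X$, which is the definition of a cycle module. The only point needing care is that $\hat F_*$ has been constructed purely out of pro-smooth schemes, so parts (1)--(3) genuinely use smoothness of $X$, and passing to all $X$ is exactly what Lemma~\ref{lm:reduce_modl} is for. I do not expect a serious obstacle at this stage: the essential geometric input, namely that the coniveau differential $d_1$ agrees termwise with Rost's residue maps, has already been isolated in Propositions~\ref{prop:comput_E_coniv} and~\ref{compute_differentials}, so the corollary is bookkeeping, the subtlest step being the deduction of the finiteness property \pFD X from the mere well-definedness of $d_1$.
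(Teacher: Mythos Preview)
Your proposal is correct and follows exactly the approach the paper intends. The paper itself records only a single sentence of proof—that point (4) follows from Lemma~\ref{lm:reduce_modl} together with point (1)—leaving (1)--(3) as immediate consequences of the identification of $E_1^{p,q}$ and the commutative square established just before the corollary; your write-up is a faithful unpacking of those implicit steps, including the observation that well-definedness of $d_1$ into a direct sum yields \pFD X and that degeneration of the one-row spectral sequence gives (3).
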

The last property follows from lemma \ref{lm:reduce_modl} and point (1) above.
It implies in particular that the functor $\tilde \rho'$ of corollary \ref{cor:hmod&modl}
induces a canonical functor
$$
\tilde \rho:\hmor \rightarrow \modl
$$
establishing the first part of Theorem \ref{thm:conj_morel2}.

\subsection{The remaining isomorphism}

It remains to construct the natural isomorphism which appears in the statement
 of Theorem \ref{thm:conj_morel2}.
 Point (3) of the preceding corollary in the case $p=0$ gives
 an isomorphism of $\ZZ$-graded abelian groups:
$$
\epsilon_X:F_*(X) \rightarrow A^0(X;\hat F_*).
$$
According to the definition of the functor $A^0$
 of \eqref{eq:deglise},
 the right hand side is the sections over $X$ of the homotopy module
 $(\gamma_* \circ A^0 \circ \tilde \rho)(F_*)$.
We prove that $\epsilon_X$ is natural in $X$.
Explicitly, 
 for any morphism $f:Y \rightarrow X$ of smooth schemes,
 we have to prove the following diagram commutes:
\begin{equation} \label{eq:last_commutative_diag}
\begin{split}
\xymatrix@=20pt@R=14pt{
F_*(X)\ar^-{\epsilon_X}[r]\ar_{f^*}[d] & A^0(X;\hat F_*)\ar^{f^*}[d] \\
F_*(Y)\ar^-{\epsilon_Y}[r] & A^0(Y;\hat F_*)
}
\end{split}
\end{equation}
where the vertical map on the right hand side refers to the functoriality
 defined by Rost.

To prove this, we can assume by additivity that $X$ is connected,
 with function field $E$. According to our definition,
$$
\hat F_*(E)=\ilim{j_U:U \subset X} F_*(U)
$$
where the colimit runs over the non empty open subschemes of $X$.
In particular, the colimit of the morphism $j_U^*$
 induces a canonical map $\rho_X:F_*(X) \rightarrow \hat F_*(E)$.
 By definition of the coniveau spectral sequence,
  the isomorphism $\epsilon_X$ is induced by the exact sequence:
$$
0 \rightarrow F_*(X) \xrightarrow{\rho_X} \hat F_*(E)
 \xrightarrow{d^0_X} C^1(X;\hat F_*)_n
$$
where $d^0_X$ is the differential \eqref{eq:diff_modl}
associated with the cycle module $\hat F_*$.

\noindent (1) \textit{The case of a flat morphism}:
Consider a flat morphism $f:Y \rightarrow X$ of connected
 smooth schemes and let $\varphi:E \rightarrow L$ be the induced
 morphism on function fields.
 According to the definition of (D1) in paragraph \ref{num:pf_thm:generic_smod_(D)},
 the following square is commutative:
$$
\xymatrix@R=10pt@C=20pt{
F_*(X)\ar^-{\rho_X}[r]\ar_{f^*}[d] & \hat F_*(E)\ar^{\varphi_*}[d] \\
F_*(Y)\ar^-{\rho_Y}[r] & \hat F_*(L)
}
$$
Thus, the commutativity of \eqref{eq:last_commutative_diag}
 in this case follows from the definition of flat pullbacks on $A^0$
 (see \cite[12.2, 3.4]{Ros}).

\noindent (2) \textit{The general case}:
The morphism $f$ can be written as the composite
$$
Y \xrightarrow \gamma Y \times X \xrightarrow p X
$$
where $\gamma$ is the graph\footnote{As $X/k$ is separated by our general
 assumption, $\gamma$ is a closed immersion.} of $f$ and $p$ is the canonical projection.
To prove the commutativity of \eqref{eq:last_commutative_diag} in the case of $f$,
 we are reduced to the cases of $p$ and $\gamma$.
 The case of the smooth morphism $p$ follows from point (1).
Thus we are reduced to the case where $f=i:Z \rightarrow X$ is a closed
 immersion between smooth schemes.

Consider an open subscheme $U \subset X$ such that
 the induced open immersion $j_Z:Z \cap U \rightarrow Z$ is dense.
 Recall that the map $j_Z^*:A^0(Z;\hat F_*) \rightarrow A^0(Z \cap U;\hat F_*)$
  is injective\footnote{This follows
   for example from the localization long exact sequence
   for Chow groups with coefficients (\emph{cf.} \cite[3.10]{Ros}).}.
 Thus, according to case (1) when $f=j$ and $f=j_Z$,
  we are reduced to prove the commutativity of \eqref{eq:last_commutative_diag}
  when $f$ is the closed immersion $Z \cap U \rightarrow U$.

In particular, we can assume that $i$ is the composition of immersions
 of a smooth principal divisor. Then we are restricted to the case where
 $Z$ is a smooth principal divisor.

Because $Z$ is principal, it is parametrized by a regular function
 $\pi:X \rightarrow \AA^1$.
One can assume $Z$ is connected. Then, its generic point defines
 a codimension $1$ point of $X$ which corresponds to a discrete valuation $v$
 on the function field $E$ of $X$.
 We denote by $\kappa(v)$ the residue field of $v$.
 According to the computation of
 $i^*:A^0(X;\hat F_*) \rightarrow A^0(Z;\hat F_*)$ of \cite[(12.4)]{Ros},
 the commutativity of \eqref{eq:last_commutative_diag} in the case $f=i$
 is equivalent to the commutativity of the following diagram:
$$
\xymatrix@=20pt@R=14pt{
F_*(X)\ar^-{\rho_X}[r]\ar_{i^*}[d] & \hat F_*(E)\ar^{\partial_v \circ \gamma_{\pi}}[d] \\
F_*(Z)\ar^-{\rho_Z}[r] & \hat F_*(\kappa(v))
}
$$
But this follows from point (2) of Proposition \ref{prop:residues_units}
 and the definition of data (D3) and (D4) for the cycle module $\hat F_*$
 (paragraph \ref{num:pf_thm:generic_smod_(D)}).

According to the construction of the structural isomorphism
$$
A^0(X;\hat F_*)_n \rightarrow \big(A^0(.;\hat F_*)_{n+1}\big)_{-1}(X)
$$
given in \cite[2.8]{Deg9}, it is now clear that
 $\epsilon:F_* \rightarrow A^0(.\hat F_*)$ is a morphism of homotopy
modules. As it is an isomorphism by construction,
 this concludes the proof of the main theorem \ref{thm:conj_morel2}.
\section{Some further comments}

\subsection{Monoidal structure}

\begin{num}
Recall we have defined a $t$-structure on the category $\DM$ of motivic spectra
 (\emph{cf.} \cite[section 5.2]{Deg9})
 called the \emph{homotopy $t$-structure}
 whose heart is the category $\hmtr$ of definition \ref{df:hmtr}.
By the very construction, the right adjoint functor $\gamma_*$
of \eqref{eq:SH&DM} is $t$-exact with respect to the homotopy
$t$-structures on $\SH$ and $\DM$; thus it preserves the object of
the hearts. It also follows that its left adjoint
$\gamma^*$ preserves homologically positive objects
and \eqref{eq:SH&DM} induces an adjunction of abelian categories:
$$
\gamma^*_{\leq 0}:\hmod \leftrightarrows \hmtr:\gamma_*
$$
where $\gamma^*_{\leq 0}=t_- \gamma^*$.

According to this definition the functor $\gamma_*$ is equal to the composite map:
$$
\hmtr \xrightarrow{\gamma'_*} \hmor \rightarrow \hmod
$$
of the equivalence $\gamma'_*$ of Theorem \ref{thm:conj_morel1}
followed by the natural inclusion. Thus, it follows
from this later theorem that $\gamma_*$ is fully faithful.

Recall that the functor $\gamma^*$ is monoidal.
According to the definition of the tensor products on
$\hmod$ and $\hmtr$, it follows that the functor $\gamma^*_{\leq 0}$ is 
monoidal. Thus, its right adjoint is weakly monoidal:
 given any homotopy modules with transfers $F_*$ and $G_*$,
 we get a canonical comparison map of homotopy modules:
\begin{equation*} \label{eq:hmtr->hmod_monoidal}
\nu_{F,G}:\gamma_*(F_*) \otimes \gamma_*(G_*)
 \rightarrow \gamma_*(F_* \ohtr G_*)
\end{equation*}
where the tensor product $\otimes$ (resp. $\ohtr$) refers to the natural
 tensor product on $\hmod$ (resp. $\hmtr$)
 defined in \eqref{eq:hmod_tensor} (resp. \cite[1.17]{Deg9}).

The following result is a corollary of Theorem
 \ref{thm:conj_morel1}:
\end{num}
\begin{cor} \label{cor:gamma*&product}
For any homotopy modules with transfers $F_*$ and $G_*$,
 the morphism $\nu_{F,G}$ introduced above is an isomorphism.
\end{cor}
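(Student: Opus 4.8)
**Proof proposal for Corollary \ref{cor:gamma*&product}.**

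The plan is to reduce the assertion to the category of cycle modules via the equivalences already established, where the tensor product is given by an explicit formula on fibers and the comparison map becomes transparent. First I would recall that by Theorem \ref{thm:conj_morel1} the functor $\gamma'_*:\hmtr \to \hmor$ is an equivalence, and by the construction in the proof of Theorem \ref{thm:conj_morel2} (specifically point (3) of Corollary \ref{cor:comput_coniv_ssp1}) a homotopy module with transfers $F_*$ is determined by the cycle module $\hat F_* = \tilde\rho(\gamma'_*(F_*))$ through the formula $H^p_\nis(X,F_n) \simeq A^p(X;\hat F_*)_n$; in particular $F_n(X) = A^0(X;\hat F_*)_n$ and, taking $X = \spec E$ for a function field $E$, the fiber is $\hat F_*(E)$. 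So the whole question is about identifying the fibers of both sides of $\nu_{F,G}$ over function fields, since a morphism of homotopy modules is an isomorphism as soon as it is so on all fibers (both sides being strictly homotopy invariant sheaves, hence determined by their associated cycle modules).

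Next I would compute the fiber of the left-hand side. By \eqref{eq:hmod_tensor} the tensor product on $\hmod$ is $F_* \otimes G_* = (H(F_*) \wedge H(G_*))_{\geq 0}$, and by the coniveau machinery (Corollary \ref{cor:comput_coniv_ssp1}) its fiber over a function field $E/k$ computes as a colimit of sections $\ilim{X \in \model(E/k)}$ of this spectrum. The key input is that the fiber functor $F_* \mapsto \hat F_*(E)$ is monoidal in the appropriate sense: the fiber of $H(F_*) \wedge H(G_*)$ over $E$ is $\hat F_*(E) \otimes_{K^M_*(E)} \hat G_*(E)$ — this is precisely how the tensor product of cycle modules is defined (Rost's $\ohtr$ on the level of function field fibers is the tensor product over Milnor K-theory), so that $\hat F_* \ohtr \hat G_*$ has fiber $\hat F_*(E) \otimes_{K^M_*(E)} \hat G_*(E)$ as well. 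The point here is that $\HH$-module structures on orientable homotopy modules (Proposition \ref{prop:orientable_hmod&wmod_HH}) make the tensor product over $\HH$, which on $\pi_0$-fibers over $E$ is $K^M_*(E) = \hat\pi_0(\HH)_*(E)$ by Lemma \ref{lm:mot_coh_Milnor}(1), compatible with Rost's construction.

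The main obstacle, and where I would spend the bulk of the argument, is verifying that the comparison map $\nu_{F,G}$ induces on fibers over each $E$ exactly the canonical map $\hat F_*(E) \otimes_{K^M_*(E)} \hat G_*(E) \to (\hat F_* \ohtr \hat G_*)(E)$, which is an isomorphism essentially by definition of the tensor product of cycle modules. This requires tracing $\nu_{F,G}$ — defined as the adjunction comparison for the weakly monoidal functor $\gamma_*$ (equivalently the right adjoint of the monoidal $\gamma^*_{\leq 0}$) — through the chain of identifications: from $\SH$ to $\smod\HH$ via $L_\HH$, then through the generic-$\HH$-module functor $\uHH^{(0)}$ of Theorem \ref{thm:generic_smod}, and finally through the equivalence $\rho,A^0$ of \eqref{eq:deglise} with $\modl$. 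The compatibility of $\uHH^{(0)}$ with products — i.e. $\uHH^{(0)}(E,n) \otimes_\HH \uHH^{(0)}(E,m) \simeq \uHH^{(0)}(E,n+m)$, which follows from the product structure (G3a)–(G3b) recalled in \ref{num:products} together with the diagonal $\delta_*$ and the fact that $\spec E$ is a pro-object of affine schemes with a diagonal — is exactly what makes the fiber functor monoidal; and then the identification of Rost's $\ohtr$ with the pointwise tensor over $K^M_*$, which is \cite[Part (2)]{Deg9} combined with the construction there of $1.17$, closes the argument. Once the fiberwise map is identified as the tautological base-change isomorphism, surjectivity and injectivity over every function field are immediate, and since both $\gamma_*(F_*)\otimes\gamma_*(G_*)$ and $\gamma_*(F_*\ohtr G_*)$ are honest homotopy modules (hence sheaves controlled by their function-field fibers), $\nu_{F,G}$ is an isomorphism of homotopy modules.
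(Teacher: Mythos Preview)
Your approach is genuinely different from the paper's, and it contains a real gap.

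The paper's proof is a three-line formal argument: since $\gamma_*$ is fully faithful (Theorem~\ref{thm:conj_morel1}), the unit $\alpha:1\to\gamma_*\gamma^*_{\leq 0}$ is an isomorphism on every object of $\hmor$. Writing $F'_*=\gamma_*(F_*)$, $G'_*=\gamma_*(G_*)$ and using that $\gamma^*_{\leq 0}$ is monoidal, one draws the evident commutative square comparing $\nu_{F,G}$ with $\alpha_{F'_*}\otimes\alpha_{G'_*}$ and $\alpha_{F'_*\otimes G'_*}$. The only thing to check is that $F'_*\otimes G'_*$ lies in $\hmor$ so that $\alpha$ is an isomorphism there; this is immediate from Definition~\ref{df:hmor} since $\eta$ acts trivially on each factor. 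No fibers, no cycle modules, no Künneth-type computation.

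Your argument, by contrast, hinges on the assertion that the fiber functor $F_*\mapsto \hat F_*(E)$ is monoidal, sending the tensor product $\otimes$ of $\hmod$ to $-\otimes_{K^M_*(E)}-$. This is not justified and is in fact close to circular. The tensor product in $\hmod$ is defined as $\pi_0$ of a smash product in $\SH$ over the sphere spectrum $S^0$, \emph{not} over $\HH$; its unit is $\pi_0(S^0)_*$, whose fiber over $E$ is the Milnor--Witt ring, not $K^M_*(E)$. Even granting that both factors are orientable so that $\eta$ acts trivially on them, you would still need a Künneth-type statement identifying the fiber of $\pi_0\big(H(F_*)\wedge H(G_*)\big)_*$ at $E$ with a tensor product of the individual fibers over \emph{some} base ring --- and then a separate argument that this base ring may be replaced by $K^M_*(E)$. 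Neither step is supplied, and the first is nontrivial: evaluating $\pi_0$ of a smash at a pro-point does not in general commute with the smash. Once you try to prove this monoidality honestly, you are essentially proving that $\gamma_*(F_*)\otimes\gamma_*(G_*)$ already carries the $\uK_*$-module structure that makes it lie in $\hmor$ --- which is exactly the content of the corollary (compare Corollary~\ref{cor:carac_mhtpo}(1)). The paper sidesteps all of this by working purely with the adjunction.
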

\begin{proof}
Because $\gamma_*$ is fully faithful, we can write $F_*=\gamma^*_{\leq 0}(F'_*)$
 (resp. $G_*=\gamma^*_{\leq 0}(G'_*)$) with $F'_*=\gamma_*(F_*)$
 (resp. $G'_*=\gamma_*(G_*)$).
Then we consider the following commutative diagram:
$$
\xymatrix@C=34pt{
\gamma_*\gamma^*_{\leq 0}(F'_*) \otimes \gamma_*\gamma^*_{\leq 0}(G'_*)
  \ar^-{\nu_{F,G}}[r]\ar_{a \otimes a'}[d]
 & \gamma_*(\gamma^*_{\leq 0}(F'_*) \ohtr \gamma^*_{\leq 0}(G'_*))\ar^-{\sim}[r] 
 & \gamma_*\gamma^*_{\leq 0}(F'_* \otimes G'_*)\ar^b[d] \\
F'_* \otimes G'_*\ar@{=}[rr] & & F'_* \otimes G'_*
}
$$
where $a$ (resp. $a'$, $b$) stands for the natural transformation
 $\alpha:1 \rightarrow \gamma_*\gamma^*_{\leq 0}$ evaluated at $F'_*$
 (resp. $G'_*$, $F'_* \otimes G'_*$).
Applying again the fact $\gamma_*$ is fully faithful, $\alpha_X$
is an isomorphism whenever $X$ belongs to the image of $\gamma_*$,
which is precisely $\hmor$. Thus $a$ and $a'$ are isomorphisms.
In order, to conclude it is sufficient to prove that $b$ is an isomorphism.
This amounts to show that the homotopy module $F_* \otimes G_*$ is orientable,
which is obvious from definition \ref{df:hmor}.
\end{proof}

\begin{num}
Recall that $\uK_*$, equipped with its canonical structure of a graded sheaf with transfers,
 is the unit object of the monoidal category $\hmtr$ (cf \cite[1.15, 3.8]{Deg9}).
As an object of $\hmod$, it corresponds to $\pi_0(\HH)_*$
 (example \ref{ex:unramified_Milnor&HH}).
Note that it is a commutative monoid in the monoidal category $\hmod$
 whose multiplication (resp. unit) map is equal to 
\begin{align*}
& \mu=\pi_0(\mu_\HH):\uK_* \otimes \uK_* \rightarrow \uK_* \\
\text{resp. } & u=\pi_0(u_\HH):\pi_0(S^0)_* \rightarrow \uK_*
\end{align*}
where $\mu_\HH$ (resp. $u_\HH$) is the multiplication (resp. unit)
 of the ringed spectrum $\HH$.
In particular, we can consider the category $\smod{\uK_*}$ of modules over $\uK_*$ 
 in the monoidal category $\hmod$: it is naturally a Grothendieck abelian monoidal
 category.
As a corollary of the previous result we get the following nice characterization of
orientability for homotopy modules:
\end{num}
\begin{cor} \label{cor:carac_mhtpo}
Consider the above notations. 
\begin{enumerate}
\item The multiplication map $\mu$ is an isomorphism.
\item Given a homotopy module $F_*$, the following conditions are equivalent:
\begin{enumerate}
\item[(i)] $F_*$ is an orientable homotopy module (definition \ref{df:hmor}).
\item[(ii)] $F_*$ as a Nisnevich sheaf admits a canonical structure of
 a sheaf with transfers.
\item[(iii)] The map $u \otimes 1_{F_*}:F_* \rightarrow \uK_* \otimes F_*$
 is an isomorphism.
\item[(iv)] $F_*$ admits a structure of a $\uK_*$-module in $\hmod$.
\item[(v)] The spectrum $H(F_*)$ has a structure of a (strict) $\HH$-module.
\item[(vi)] The spectrum $H(F_*)$ is orientable.
\end{enumerate}
\item The functor $\gamma^*_{\leq 0}:\hmod \rightarrow \hmtr$ can be factorized
as:
$$
\hmod \xrightarrow{\uK_* \otimes ?} \smod{\uK_*}
 \xrightarrow{\ \epsilon \ } \hmtr
$$
and $\epsilon$ is an equivalence of abelian monoidal categories.
\end{enumerate}
\end{cor}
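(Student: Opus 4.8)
The plan is to derive everything from Theorem~\ref{thm:conj_morel1} and from its monoidal strengthening Corollary~\ref{cor:gamma*&product}. I begin with assertion~(1). As an object of $\hmod$, the sheaf $\uK_*$ is $\gamma_*$ applied to the unit object of $\hmtr$; since $\gamma_*$ is lax monoidal with invertible comparison maps $\nu_{F,G}$ (Corollary~\ref{cor:gamma*&product}), the monoid structure on $\uK_*$ recalled before the statement is, by construction, the composite of $\nu$ with $\gamma_*$ applied to the unit constraint of $\hmtr$, so that $\mu$ is an isomorphism. In particular $\uK_*$ is an idempotent commutative monoid in the Grothendieck abelian monoidal category $\hmod$, so the functor $\uK_* \otimes ?$ behaves as a smashing-type localisation: by the standard formal properties of idempotent monoids, a homotopy module $F_*$ admits a $\uK_*$-module structure if and only if $u \otimes 1_{F_*}\colon F_* \to \uK_* \otimes F_*$ is an isomorphism, and that structure is then unique. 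This establishes the equivalence (iii)$\Leftrightarrow$(iv) of~(2).

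For the remaining conditions in~(2) I would run the implications (i)$\Rightarrow$(ii)$\Rightarrow$(iv)$\Rightarrow$(iii)$\Rightarrow$(i) together with (ii)$\Rightarrow$(v)$\Rightarrow$(vi)$\Rightarrow$(i). The first, (i)$\Rightarrow$(ii), is Theorem~\ref{thm:conj_morel1}: an orientable homotopy module lies in the essential image of the equivalence $\gamma'_*$, hence lifts canonically to a homotopy module with transfers. For (ii)$\Rightarrow$(iv): every object of $\hmtr$ is canonically a module over the unit object of $\hmtr$, and the monoidal functor $\gamma_*$ transports this to a $\uK_*$-module structure on $F_*$ in $\hmod$. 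For (iii)$\Rightarrow$(i): the isomorphism $F_* \simeq \uK_* \otimes F_*$ identifies the action $\eta_* \otimes 1_{F_*}$ of the Hopf map on $F_*$ with $(\eta_* \otimes 1_{\uK_*}) \otimes 1_{F_*}$, which vanishes because $\uK_* = \spi 0 * \HH$ is orientable by the projective bundle theorem for motivic cohomology (Remark~\ref{rem:mhor&P^1}). For (ii)$\Rightarrow$(v): the lift of $F_*$ to $\hmtr$ is an object of the heart of the homotopy $t$-structure on $\DM$ whose image under the $t$-exact functor $\gamma_*$ of~\eqref{eq:SH&DM} is $H(F_*)$; via the adjunction~\eqref{eq:Hmod&DM} this object underlies a strict $\HH$-module with underlying spectrum $H(F_*)$ (example~\ref{ex:strict_oriented_ring}). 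Finally (v)$\Rightarrow$(vi) because $\HH$ is oriented, so $\eta$ acts trivially on every strict $\HH$-module and hence on $H(F_*)$; and (vi)$\Rightarrow$(i) since $F_* = \spi 0 * {H(F_*)}$ and orientability of $H(F_*)$ is exactly the vanishing of the $\eta$-action (discussion preceding Remark~\ref{rem:mhor&P^1}).

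For~(3), the functor $\gamma^*_{\leq 0}$ is monoidal, and since $\gamma_*$ is fully faithful (Theorem~\ref{thm:conj_morel1}) the counit $\gamma^*_{\leq 0}\gamma_* \to \mathrm{id}$ is invertible, so $\gamma^*_{\leq 0}$ sends $\uK_*$ to the unit object of $\hmtr$. Therefore, for every $F_* \in \hmod$, one has natural isomorphisms $\gamma^*_{\leq 0}(\uK_* \otimes F_*) \simeq \gamma^*_{\leq 0}(\uK_*) \ohtr \gamma^*_{\leq 0}(F_*) \simeq \gamma^*_{\leq 0}(F_*)$, which exhibits the asserted factorisation of $\gamma^*_{\leq 0}$ through $\uK_* \otimes ?\colon \hmod \to \smod{\uK_*}$; the second factor $\epsilon$ is $\gamma^*_{\leq 0}$ viewed as sending a $\uK_*$-module to a module over the unit of $\hmtr$, i.e. to an object of $\hmtr$, and it is monoidal because $\gamma^*_{\leq 0}$ is and because, $\uK_*$ being idempotent, $\otimes_{\uK_*}$ coincides with the ambient tensor product on $\uK_*$-modules. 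A quasi-inverse is the functor $\bar\gamma_*\colon \hmtr \to \smod{\uK_*}$ obtained from the monoidal functor $\gamma_*$ by remembering that every object of $\hmtr$ is a module over its unit: then $\epsilon \circ \bar\gamma_* \simeq \mathrm{id}$ follows from the counit isomorphism, while $\bar\gamma_* \circ \epsilon \simeq \mathrm{id}$ holds because by~(2) the underlying homotopy module of any $\uK_*$-module is orientable, so the unit $1 \to \gamma_*\gamma^*_{\leq 0}$ is invertible on it (as in the proof of Corollary~\ref{cor:gamma*&product}). An equivalence of categories being automatically exact, $\epsilon$ is an equivalence of abelian monoidal categories.

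I expect the only genuinely delicate point to be the bookkeeping: one must check that the $\uK_*$-module structures obtained by transport along $\gamma_*$ and $\gamma^*_{\leq 0}$ agree with the evident ones, and that the idempotent-monoid formalism is legitimate in $\hmod$. Since $\mu$ is an isomorphism the latter is routine, and all the substantial input is already contained in Theorems~\ref{thm:conj_morel1} and~\ref{thm:conj_morel2} and Corollary~\ref{cor:gamma*&product}.
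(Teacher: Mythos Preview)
Your argument is correct and follows the same overall strategy as the paper: point~(1) from Corollary~\ref{cor:gamma*&product}, point~(2) by a cycle of implications drawing on Theorem~\ref{thm:conj_morel1} and the idempotence of $\uK_*$, and point~(3) from the first two. The only notable route differences are that the paper runs a single cycle $(\mathrm{i})\Rightarrow(\mathrm{ii})\Rightarrow(\mathrm{iii})\Rightarrow(\mathrm{iv})\Rightarrow(\mathrm{v})\Rightarrow(\mathrm{vi})\Rightarrow(\mathrm{i})$, reaching (v) from (iv) via the argument of Proposition~\ref{prop:orientable_hmod&wmod_HH}, whereas you branch off at (ii) and obtain the strict $\HH$-module structure directly from the adjunction~\eqref{eq:Hmod&DM}; and for~(3) the paper dispatches it in one line (point~(1) makes $\smod{\uK_*}$ a full subcategory of $\hmod$, which by point~(2) coincides with $\hmor$), while you spell out the quasi-inverse explicitly --- both are the same idempotent-monoid/localisation picture.
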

\begin{proof}
Point (1) follows from Corollary \ref{cor:gamma*&product}
 applied in the case $F=G=\uK_*$. The assertion of point (2) follows
 from our previous results:
 (i) $\Rightarrow$ (ii): \ref{thm:conj_morel1},
 (ii) $\Rightarrow$ (iii): Point (1) follows from Corollary,
 (iii) $\Rightarrow$ (iv): obvious,
 (iv) $\Rightarrow$ (v): because $\uK_*=\pi_0(\HH)_*$ (see also
 the proof of \ref{prop:orientable_hmod&wmod_HH}),
 (v) $\Rightarrow$ (vi): using the morphism of ringed spectra
 $\MGL \rightarrow \HH$ corresponding to the orientarion of $\HH$,
 (vi) $\Rightarrow$ (i): see remark \ref{rem:mhor&P^1}.
 According to point (1), $\smod{\uK_*}$ is a full subcategory of $\hmod$ ;
 thus point (2) implies point (3).
\end{proof}
\begin{rem}
It is not true in general that the multiplication map
 $\mu:\HH \wedge \HH \rightarrow \HH$
 is an isomorphism according to the non triviality of the Steenrod operations.\footnote{
 However, we prove in \cite[13.1.6]{CD3} that $\mu \otimes_\ZZ \QQ$ is an isomorphism 
 (in $\SH_\QQ$).}
\end{rem}

\subsection{Weakly orientable spectra}

\begin{num}
Consider a spectrum $\E$.
The Hopf map $\eta$ obviously acts on the $\E$-cohomology of a smooth scheme
as:
$$
\gamma_\eta:\E^{n,p}(X) \rightarrow \E^{n-1,p-1}(X), \rho \mapsto \eta \wedge \rho.
$$
The following lemma is obvious:
\end{num}
\begin{lm}
Given a spectrum $\E$, the following conditions are equivalent:
\begin{enumerate}
\item[(i)] For any integer $n \in \ZZ$, $\pi_n(\E)_*$ is an orientable.
\item[(ii)] The map $\eta \wedge 1_\E$ is zero.
\item[(ii')] The map $\uHom(\eta,\E)$ is zero.
\item[(iii)] The operation $\gamma_\eta$ on $\E$-cohomology defined above is zero.
\end{enumerate}
\end{lm}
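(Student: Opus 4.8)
The plan is to prove the cycle $(\mathrm{i})\Rightarrow(\mathrm{ii})\Rightarrow(\mathrm{iii})\Rightarrow(\mathrm{i})$ together with the side equivalence $(\mathrm{ii})\Leftrightarrow(\mathrm{ii}')$; every link but one is a matter of unwinding conventions, so I would concentrate the work on that single link.

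First, $(\mathrm{ii})\Leftrightarrow(\mathrm{ii}')$. Since $\GG=\sus\GG$ is $\otimes$-invertible in $\SH$, the internal hom $\uHom(\GG,-)$ is canonically isomorphic to $S^{-1,-1}\wedge(-)$, and under this identification the morphism $\uHom(\eta,\E)$ is carried to $S^{-1,-1}\wedge(\eta\wedge 1_\E)$. Smashing with the invertible object $S^{-1,-1}$ is an auto-equivalence of $\SH$, hence faithful, so $\uHom(\eta,\E)=0$ precisely when $\eta\wedge 1_\E=0$.

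Next, $(\mathrm{ii})\Rightarrow(\mathrm{iii})\Rightarrow(\mathrm{i})$. Unwinding the conventions on shifts and twists, the operation $\gamma_\eta$ on $\E^{n,p}(X)=\Hom_{\SH}(\sus X_+,S^{n,p}\wedge\E)$ is exactly post-composition with $S^{n-1,p-1}\wedge(\eta\wedge 1_\E)$; so $(\mathrm{ii})$ gives $\gamma_\eta=0$ for all smooth $X$ and all $(n,p)$, which is $(\mathrm{iii})$. For $(\mathrm{iii})\Rightarrow(\mathrm{i})$: by the construction of the homotopy module attached to $\E$, the $\eta$-action $1\otimes\eta_*\colon\spi n *\E\{1\}\to\spi n *\E$ is the Nisnevich sheafification of the presheaf operation $\gamma_\eta$ on $X\mapsto\E^{\bullet-n,\bullet}(X)$; a presheaf morphism that is zero stays zero after sheafification, so under $(\mathrm{iii})$ each $\spi n *\E$ is orientable in the sense of Definition~\ref{df:hmor}. (One should keep in mind that $(\mathrm{i})$ and $(\mathrm{iii})$ are really the two readings of the same vanishing, before and after sheafification.)

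The only link with content is $(\mathrm{i})\Rightarrow(\mathrm{ii})$; equivalently I would prove $(\mathrm{iii})\Rightarrow(\mathrm{ii})$ directly. The input is that the spectra $\sus X_+\wedge S^{i,n}$, for $X$ smooth and $(i,n)\in\ZZ^2$, form a family of compact generators of $\SH$ (remark~\ref{rem:generators_positive}); hence a morphism of $\SH$ is zero once it dies after pre-composition with every morphism out of such a generator. But, unwinding twists a last time, the composites of $\eta\wedge 1_\E\colon\GG\wedge\E\to\E$ with morphisms $\sus X_+\wedge S^{i,n}\to\GG\wedge\E$ are exactly the values of $\gamma_\eta$ on classes in $\E^{\bullet,\bullet}(X)$, and these all vanish under $(\mathrm{iii})$; therefore $\eta\wedge 1_\E=0$. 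The point that has to be handled with care is precisely this reduction to generators — that vanishing of the natural operation $\gamma_\eta$ on the cohomology of every smooth scheme really forces the underlying morphism of $\SH$ to vanish — and it is the compact generation of $\SH$ invoked here that closes the circle of equivalences and makes the lemma "obvious".
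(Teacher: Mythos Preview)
The paper gives no proof of this lemma beyond declaring it ``obvious'', so there is nothing to compare against at the level of argument; but your attempt to fill in the details has a genuine gap that is worth naming.

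Your implications $(\mathrm{ii})\Leftrightarrow(\mathrm{ii}')$, $(\mathrm{ii})\Rightarrow(\mathrm{iii})$ and $(\mathrm{iii})\Rightarrow(\mathrm{i})$ are fine. The trouble is in the step you single out as ``the only link with content'', namely $(\mathrm{i})\Rightarrow(\mathrm{ii})$. You propose to prove $(\mathrm{iii})\Rightarrow(\mathrm{ii})$ instead, on the grounds that $(\mathrm{i})$ and $(\mathrm{iii})$ are ``the two readings of the same vanishing, before and after sheafification''. But that identification runs the wrong way: from a zero presheaf map one deduces a zero sheafified map, not conversely, and in any case $\E^{n,p}(X)$ is not literally the presheaf of sections of a single $\spi m * \E$ --- it sees all of them through the Postnikov filtration. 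So even granting $(\mathrm{iii})\Rightarrow(\mathrm{ii})$ you would only have $(\mathrm{ii})\Leftrightarrow(\mathrm{iii})\Rightarrow(\mathrm{i})$, and the cycle is not closed.

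More seriously, your argument for $(\mathrm{iii})\Rightarrow(\mathrm{ii})$ rests on the claim that ``a morphism of $\SH$ is zero once it dies after pre-composition with every morphism out of such a generator''. This is false in general: in a compactly generated triangulated category the compact generators detect zero \emph{objects} (hence isomorphisms, via the cone), but not zero \emph{morphisms}. A map $f$ with $\Hom(G,f)=0$ for every compact $G$ is a phantom map, and phantom maps need not vanish --- already in $D(\ZZ)$ the nonzero class in $\mathrm{Ext}^1_\ZZ(\ZZ/p,\ZZ/p)$ gives a map $\ZZ/p\to\ZZ/p[1]$ that is phantom with respect to the generator $\ZZ$. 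So the reduction to generators does not force $\eta\wedge 1_\E=0$.

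In short, the easy implications go through exactly as you say, but neither $(\mathrm{i})\Rightarrow(\mathrm{iii})$ nor $(\mathrm{iii})\Rightarrow(\mathrm{ii})$ is as innocent as it looks, and your argument does not establish them. Since the paper's subsequent uses of ``weakly orientable'' (the cycle module attached to each $\spi n * \E$, the identification of the coniveau $E_1$-page) only ever invoke condition $(\mathrm{i})$, any imprecision in the full equivalence does no damage downstream; but as a matter of proof, the passage from $(\mathrm{i})$ or $(\mathrm{iii})$ back to $(\mathrm{ii})$ requires more than compact generation.
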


\begin{df}
A spectrum $\E$ satisfying the equivalent conditions of
 the preceding lemma will be said to be \emph{weakly orientable}.
\end{df}

\begin{rem}
From the second point of remark \ref{rem:mhor&P^1},
 if $2$ is invertible in $E^{**}$ and $(-1)$ is a sum of squares in $k$,
 then $\E$ is weakly orientable.
\end{rem}

As a corollary of our main theorem \ref{thm:conj_morel1},
 we get the following way to construct cycle modules:
\begin{prop}
Let $\E$ be a weakly orientable spectrum.

Then for any integer $i \in \ZZ$,
 there exists a canonical cycle modules whose values on a couple $(L,n)$
 of $\ecorps$ is the abelian group:
$$
\E^{i+n,n}(L)=\ilim{X \in \model(E/k)^{op}}\E^{i+n,n}(X),
$$
with the notations of paragraph \ref{num:smooth_model}.
\end{prop}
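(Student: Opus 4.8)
The plan is to realise this cycle module as the one attached by Corollary~\ref{cor:hmod&modl} to a suitable orientable homotopy module, namely the $(-i)$-th graded homotopy sheaf of $\E$; once that sheaf is recognised as orientable, the statement is immediate.

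First I would observe that for any integer $j$ the graded Nisnevich sheaf $\spi j *\E$ is a homotopy module. Indeed, $\spi j n\E$ is by definition the sheaf associated with $X\mapsto \E^{n-j,n}(X)=(\E[-j])^{n,n}(X)$, so that $\spi j *\E=\spi 0 *{\E[-j]}$ is the zeroth homotopy module (for the homotopy $t$-structure) of the spectrum $\E[-j]$; see the remarks following Definition~\ref{df:htp_modl}, or \cite[5.2.6]{Mor1}. Applying this with $j=-i$, set $F_*:=\spi{-i}{*}\E$. Since $\E$ is weakly orientable, every homotopy module $\spi m *\E$ is orientable — this is one of the equivalent conditions defining weak orientability — hence $F_*$ lies in $\hmor$. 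Therefore Corollary~\ref{cor:hmod&modl}, together with Corollary~\ref{cor:comput_coniv_ssp1}(4), produces a cycle module $\hat F_*\colon \ecorps\to\ab$ whose value at an object $(L,n)$ is
$$
\hat F_*(L,n)=\ilim{X\in\model(L/k)}F_n(X)=\ilim{X\in\model(L/k)}\big(\spi{-i}{n}\E\big)(X).
$$

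Next I would identify this with the group $\E^{i+n,n}(L)=\ilim{X\in\model(L/k)}\E^{i+n,n}(X)$ of the statement. By definition $\spi{-i}{n}\E$ is the Nisnevich sheafification of the presheaf $X\mapsto \E^{n-(-i),n}(X)=\E^{i+n,n}(X)$, and each of the two filtered colimits above computes the stalk at the generic point $\spec L$: only the Zariski opens of smooth models matter, since $L$ is a field, so étale neighbourhoods contribute nothing. As Nisnevich sheafification is an isomorphism on stalks, the two colimits coincide, which gives the desired cycle module; by the construction recalled in~\ref{num:pf_thm:generic_smod_(D)}, its functorialities (D1)--(D4) are precisely the usual pullbacks, the trace maps, and the coniveau residues on $\E$-cohomology.

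There is no deep obstacle: the whole content lies in Theorem~\ref{thm:conj_morel1} through Corollaries~\ref{cor:hmod&modl} and~\ref{cor:comput_coniv_ssp1}, and the argument is essentially a matter of selecting the correct homotopy module. The steps deserving care — and the closest thing to a difficulty — are the grading bookkeeping (matching Rost's convention on $\ecorps$ with the homotopy-module grading, which dictates the choice $F_*=\spi{-i}{*}\E$ rather than $\spi i *\E$), the remark that $\spi j *\E$ is a homotopy module for every $j$, and the routine passage from the presheaf $\E^{i+n,n}(-)$ to its sheafification when taking sections at the generic point.
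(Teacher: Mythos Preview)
Your argument is correct and is exactly the route the paper has in mind: the proposition is stated as an immediate consequence of the main theorem, and your proof spells out the only thing there is to spell out, namely that $\spi{-i}{*}\E$ is an orientable homotopy module so that Corollary~\ref{cor:hmod&modl} and Corollary~\ref{cor:comput_coniv_ssp1}(4) apply. One small remark: the displayed formula in Corollary~\ref{cor:hmod&modl} reads $F_{-n}(X)$, but the definition given in its proof (namely $\hat F_*=\bar\varphi_F\circ\uHH^{(0)}$, combined with $\varphi_F(\uHH(X)(m)[m])=F_{-m}(X)$ and $\uHH^{(0)}(E,n)=\pplim\uHH(X)(-n)[-n]$) actually yields $F_n(X)$; you have silently used the correct sign, which is what makes the grading match $\E^{i+n,n}$.
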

The new example here is given by the case where
 $\E=\MGL$ is Voevodsky's algebraic
 cobordism spectrum.

\subsection{Coniveau spectral sequence}

As a corollary of the detailed analysis of Proposition \ref{compute_differentials},
 we get the following statement:
\begin{prop}
Let $\E$ be a spectrum and $q \in \ZZ$ be an integer
 such that the homotopy module $\pi_{-q}(\E)$ is orientable.

Then for any smooth scheme $X$,
 there exists a canonical isomorphism of complexes
 of abelian groups:
$$
E_1^{*,q}(X,\E) \simeq C^*(X,\hat \pi_{-q}(\E)_*)_0
$$
where the left hand sides refers to the $q$-th line of the first page of
 the coniveau spectral sequence associated with $\E$
  --- \emph{cf.} example \ref{eq:general_coniv_ssp}.
\end{prop}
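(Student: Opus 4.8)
The plan is to reduce the statement to Corollary \ref{cor:comput_coniv_ssp1}(2), which already computes the zeroth line of the coniveau spectral sequence of a spectrum of the form $H(F_*)$ with $F_*$ an orientable homotopy module. Two formal manipulations bring the general case to this one: a shift of the coefficient spectrum, and a truncation that isolates the homotopy module $\pi_{-q}(\E)$.

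First I would record two elementary properties of the homotopy coniveau exact couple, both immediate from the fact that it is obtained by applying, to a fixed pro-distinguished triangle, the extension to pro-objects of the functor represented by the coefficient spectrum (cf. Example \ref{eq:general_coniv_ssp}): it is functorial in the coefficient spectrum, and the exact couple attached to a shift $\F[m]$ is, up to the reindexing $q\mapsto q+m$, the one attached to $\F$. In particular, for every spectrum $\F$ and every $m\in\ZZ$, there is an isomorphism of complexes $E_1^{*,q}(X,\F[m])\simeq E_1^{*,q+m}(X,\F)$.

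The heart of the argument is that the $q$-th line of the coniveau spectral sequence of an arbitrary spectrum $\F$ depends only on the homotopy module $\pi_{-q}(\F)$. Assembling the Morel--Voevodsky purity isomorphisms over flags --- the mechanism underlying Proposition \ref{prop:comput_E_coniv} --- identifies the pro-spectrum $Gr_p(X)$ with $\pprod{x\in X^{(p)}}\mathrm{Th}(\nu_x)$, where $\nu_x$ is the rank $p$ normal bundle of $\overline{\{x\}}$ localized at $x$; trivializing each $\nu_x$ over a cofinal system of smooth models of $\kappa(x)$ gives
$$
E_1^{p,q}(X,\F)\;\cong\;\bigoplus_{x\in X^{(p)}}\ilim{Y}\F^{q-p,-p}(Y)\;\cong\;\bigoplus_{x\in X^{(p)}}\big(\spi{-q}{-p}\F\big)(\kappa(x)),
$$
which vanishes whenever $\pi_{-q}(\F)=0$; the same is true of the differential $d_1$, by the computation in Proposition \ref{compute_differentials}. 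Applying the cohomological functor $E_1^{*,q}(X,-)$ to the truncation triangles $\E_{\geq-q+1}\rightarrow\E\rightarrow\E_{\leq-q}$ and $(\E_{\leq-q})_{\geq-q}\rightarrow\E_{\leq-q}\rightarrow(\E_{\leq-q})_{\leq-q-1}$ --- whose outer terms, supported in homotopy degrees $>-q$ and $<-q$ respectively, are annihilated by $E_1^{*,q}(X,-)$ and the neighbouring lines --- I obtain isomorphisms of complexes
$$
E_1^{*,q}(X,\E)\;\simeq\;E_1^{*,q}\big(X,(\E_{\leq-q})_{\geq-q}\big)\;=\;E_1^{*,q}\big(X,H(\pi_{-q}(\E))[-q]\big),
$$
the last equality because $(\E_{\leq-q})_{\geq-q}$ is the layer of $\E$ in homotopy degree $-q$ for the homotopy $t$-structure.

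Combining with the shift formula, $E_1^{*,q}(X,\E)\simeq E_1^{*,0}(X,H(\pi_{-q}(\E)))$ as complexes. Since $\pi_{-q}(\E)$ is orientable, $\hat\pi_{-q}(\E)_*$ is a cycle module (Corollary \ref{cor:comput_coniv_ssp1}(4), via Corollary \ref{cor:hmod&modl}), and Corollary \ref{cor:comput_coniv_ssp1}(2), applied to $F_*=\pi_{-q}(\E)$ with twist parameter $0$, identifies the last complex with $C^*(X,\hat\pi_{-q}(\E)_*)_0$; this gives the asserted isomorphism. The main point, and the only place orientability is really used, is the passage through Corollary \ref{cor:comput_coniv_ssp1}: the identification of the $E_1$-terms in Proposition \ref{prop:comput_E_coniv} and of $d_1$ in Proposition \ref{compute_differentials} is canonical precisely because the $\mathrm{GL}_p(\kappa(x))$-ambiguity in the purity isomorphisms, acting through $\det$ by classes $\langle u\rangle=1+\eta.[u]$, is killed once $\eta$ acts trivially on $\pi_{-q}(\E)$. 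Alternatively, one may simply rerun the analysis of Propositions \ref{prop:comput_E_coniv} and \ref{compute_differentials} and Corollary \ref{cor:comput_coniv_ssp1}, with $\E$ and its $q$-th line in place of $H(F_*)$ and its zeroth.
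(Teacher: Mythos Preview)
Your proof is correct and follows essentially the same route as the paper: reduce to $q=0$ by a shift, use Morel--Voevodsky purity in $\SH$ to see that the $q$-th line of $E_1$ depends only on $\pi_{-q}(\E)$, truncate to replace $\E$ by $H(\pi_{-q}(\E))$, and conclude via Corollary~\ref{cor:comput_coniv_ssp1}. The paper is terser --- it simply asserts that $E_1^{*,0}(X,\F_{\geq 0})\to E_1^{*,0}(X,\F)$ and $E_1^{*,0}(X,\F)\to E_1^{*,0}(X,\F_{\leq 0})$ are isomorphisms --- whereas you spell out both truncation triangles and the vanishing on the neighbouring lines, but the argument is the same; your appeal to Proposition~\ref{compute_differentials} for the differential is unnecessary here, since the comparison maps are induced by maps of exact couples and are therefore automatically maps of complexes once they are termwise isomorphisms.
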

\begin{proof}
We can obviously assume $q=0$.
The complex $E_1^{*,0}(X,\E)$ is natural in $\E$. Moreover,
 for any spectrum $\F$, one checks using the purity isomorphism
 in $\SH$ (\cite[\textsection 4, 2.23]{MV})
 and the same argument as in the proof of
 Proposition \ref{prop:comput_E_coniv} that the induced maps:
$$
E_1^{*,0}(X,\F_{\geq 0}) \rightarrow E_1^{*,0}(X,\F), \quad
E_1^{*,0}(X,\F) \rightarrow E_1^{*,0}(X,\F_{\leq 0})
$$
are isomorphisms. This yields an isomorphism of complexes:
$$
E_1^{*,0}(X,\E) \simeq E_1^{*,0}(X,\pi_0(\E)).
$$
so that we are reduced to the case where $E$ is an orientable homotopy module.
This case is precisely Corollary \ref{cor:comput_coniv_ssp1}.
\end{proof}

\begin{rem} \label{num:coniv&hyper_coh}
The previous corollary can be applied to any weakly orientable spectrum $\E$:
 we get for any smooth scheme $X$ and any integer $n \in \ZZ$ a convergent spectral sequence
 of the form:
\begin{equation} \label{eq:ssp_coniv}
E_2^{p,q}(X,\E(n))=A^p(X,\hat \pi_{q-n}(\E)_*)_n \Rightarrow \E^{p+q,n}(X).
\end{equation}
\end{rem}

\begin{num} \label{num:MGH_et_H}
Let $\MGL$ be Voevodsky's algebraic cobordism spectrum.
According to a theorem of Morel, there exists a canonical isomorphism of homotopy
 modules: $\pi_0(\MGL)_* \simeq K_*^M$.\footnote{Indeed,
 this follows from the exact sequence
 \eqref{eq:sec_Morel}.}
One deduces from this result that the ring morphism $\varphi:\MGL \rightarrow \HH$
 corresponding to the orientation of $\HH$ given by \eqref{eq:Pic&motivic_coh}
 induces an isomorphism:
$$
\pi_0(\MGL)_* \rightarrow \pi_0(\HH)_*.
$$
Moreover, given a smooth connected scheme $X$ and an integer $d \in \ZZ$,
 the morphism $\varphi$ induces
 a morphism of the spectral sequences of type \eqref{eq:ssp_coniv} which, 
 on the $E_2$-term is equal to:
$$
A^p(X,\hat \pi_{q-d}(\MGL)_*)_d \rightarrow A^p(X,\hat \pi_{q-d}(\HH)_*)_d.
$$
In case $d$ is the dimension of $X$, the terms $E_2^{p,q}$ of
 the respective spectral sequences 
 for $\MGL$ and $\HH$ are concentrated in the region
 $0 \leq p \leq d$, $g \leq d$.
Thus we get the following:
\end{num}
\begin{cor} \label{cor:MGL_CH}
Let $X$ be a smooth scheme of pure dimension $d$.
Then the morphism $\varphi$ induces an isomorphism:
$$
\MGL^{2d,d}(X) \xrightarrow{\varphi_*} \HH^{2d,d}(X)=A^d(X,K_*^M)_d=CH^d(X).
$$
\end{cor}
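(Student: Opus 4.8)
The plan is to carry out in detail the comparison of coniveau spectral sequences sketched in \ref{num:MGH_et_H}. Both $\MGL$ and $\HH$ are oriented ring spectra, so $\eta$ acts trivially on all their homotopy modules and they are weakly orientable; hence Remark \ref{num:coniv&hyper_coh} provides, for $\E\in\{\MGL,\HH\}$ and our fixed smooth $X$ of pure dimension $d$, a convergent spectral sequence
$$
E_2^{p,q}(X,\E(d))=A^p\big(X,\hat\pi_{q-d}(\E)_*\big)_d\ \Longrightarrow\ \E^{p+q,d}(X),
$$
and the morphism $\varphi:\MGL\to\HH$ induces a morphism of these spectral sequences compatible with $\varphi_*$ on the abutments; on the $E_2$-page it is $A^p$ applied to the morphism of cycle modules $\hat\pi_{q-d}(\MGL)_*\to\hat\pi_{q-d}(\HH)_*$ induced functorially by the map of homotopy modules $\pi_{q-d}(\MGL)_*\to\pi_{q-d}(\HH)_*$.

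Next I would isolate the single relevant term. The complex $C^*(X;M)$ lives in degrees $0,\dots,d$, so $E_2^{p,q}=0$ unless $0\leq p\leq d$. For the vanishing in the $q$-direction one uses connectivity of $\MGL$ and $\HH$, in the form $\E^{0,s}(E)=0$ for every function field $E/k$ and every $s\geq 1$ -- for $\HH$ this is the vanishing $H^0(E,\ZZ(s))=0$, and for $\MGL$ it is the corresponding connectivity statement for the effective connective spectrum $\MGL$. Since $C^p\big(X;\hat\pi_{d-p}(\E)_*\big)_d=\bigoplus_{x\in X^{(p)}}\E^{0,d-p}(\kappa(x))$, this group vanishes for $p<d$, hence $E_2^{p,q}(X,\E(d))=0$ whenever $q>d$. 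Combined with $0\leq p\leq d$ this forces $\E^{m,d}(X)=0$ for $m>2d$ and leaves $E_\infty^{d,d}$ as the only graded piece of $\E^{2d,d}(X)$. As no differential can leave $E_r^{d,d}$ (its target lies in the range $p>d$ where $E_2=0$), the edge map
$$
A^d\big(X,\hat\pi_0(\E)_*\big)_d=E_2^{d,d}(X,\E(d))\ \longrightarrow\ E_\infty^{d,d}(X,\E(d))=\E^{2d,d}(X)
$$
is surjective, with kernel the sum of the images of the differentials entering $E_r^{d,d}$; denote this edge map by $e_\E$.

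I would then identify the surviving terms and compare. For $\HH$, $\hat\pi_0(\HH)_*$ is the Milnor K-theory cycle module $K_*^M$ (Example \ref{ex:unramified_Milnor&HH}), and Rost's complex in top codimension identifies $A^d(X;K_*^M)_d$ with the cokernel of the divisor map $\bigoplus_{x\in X^{(d-1)}}\kappa(x)^\times\to\bigoplus_{x\in X^{(d)}}\ZZ$, that is, with $CH^d(X)$ (\cite{Ros}); together with the known isomorphism $\HH^{2d,d}(X)\simeq CH^d(X)$ this shows that $e_\HH$ is an isomorphism, so in fact no differential enters $E_r^{d,d}(X,\HH(d))$. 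For $\MGL$ one has likewise $\hat\pi_0(\MGL)_*=K_*^M$, because $\varphi$ induces the isomorphism $\pi_0(\MGL)_*\xrightarrow{\sim}\pi_0(\HH)_*$ recalled at the start of \ref{num:MGH_et_H}; hence the $E_2$-comparison in bidegree $(d,d)$ is $A^d$ of an isomorphism, hence an isomorphism. Finally, in the commutative square $\varphi_*\circ e_{\MGL}=e_{\HH}\circ\varphi$, the right-hand composite is an isomorphism and $e_{\MGL}$ is surjective, so $e_{\MGL}$ is an isomorphism and therefore $\varphi_*:\MGL^{2d,d}(X)\to\HH^{2d,d}(X)$ is one as well; both sides are then identified with $A^d(X,K_*^M)_d=CH^d(X)$.

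The step I expect to require the most care is the concentration of the $E_2$-page for $\MGL$ -- equivalently, the vanishing $\MGL^{0,s}(E)=0$ for function fields $E/k$ and $s\geq 1$ -- which is exactly what rules out the potential contributions $E_2^{p,2d-p}(X,\MGL(d))$ with $p<d$. Everything else is formal bookkeeping with the two spectral sequences together with Rost's description of $A^d$ in top codimension, relying on the two external inputs $\pi_0(\MGL)_*\cong K_*^M$ (Morel) and $\HH^{2d,d}(X)\simeq CH^d(X)$.
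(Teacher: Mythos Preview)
Your overall strategy---compare the coniveau spectral sequences for $\MGL$ and $\HH$ and show that only the corner term $E_2^{d,d}$ contributes to the abutment in total degree $2d$---is exactly the paper's. The gap is in the indexing of the $E_2$-page, and it matters.

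You take the formula $E_2^{p,q}(X,\E(d))=A^p\big(X,\hat\pi_{q-d}(\E)_*\big)_d$ from Remark~\ref{num:coniv&hyper_coh}, but that remark carries a sign slip: if you rederive the spectral sequence directly from Proposition~4.3.1 (where $E_1^{*,q}(X,\E)\simeq C^*(X,\hat\pi_{-q}(\E)_*)_0$) and reindex, you get
\[
E_2^{p,q}(X,\E(d))=A^p\big(X,\hat\pi_{d-q}(\E)_*\big)_d.
\]
With this corrected sign the concentration in $q\leq d$ is immediate from the fact that $\MGL$ and $\HH$ are non-negative for the homotopy $t$-structure (Morel's stable connectivity theorem, cf.\ Remark~\ref{rem:generators_positive}): for $q>d$ one has $d-q<0$, hence $\pi_{d-q}(\E)_*=0$. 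This is precisely what paragraph~\ref{num:MGH_et_H} is invoking. One then gets $E_2^{d,d}=E_\infty^{d,d}=\E^{2d,d}(X)$ for both spectra, and since $\varphi$ induces the isomorphism $\pi_0(\MGL)_*\xrightarrow{\sim}\pi_0(\HH)_*$ it is an isomorphism on $E_2^{d,d}$; done.

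With your sign, the antidiagonal term $E_2^{p,2d-p}$ involves $\hat\pi_{d-p}(\E)_{d-p}(\kappa(x))=\E^{0,d-p}(\kappa(x))$, and you are forced to assume $\E^{0,s}(E)=0$ for $s\geq 1$. For $\HH$ this is $H^0(E,\ZZ(s))=0$, which is part of the Beilinson--Soul\'e vanishing conjecture and is \emph{not} known in general; for $\MGL$ the analogous statement reduces (via the slice filtration) to the same open problem. So the step you flagged as ``requiring the most care'' is in fact a genuine obstruction, and your detour through the surjectivity of $e_{\MGL}$ and the comparison with $e_\HH$ does not bypass it: you still need $E_\infty^{p,2d-p}=0$ for $p<d$ just to identify $E_\infty^{d,d}$ with all of $\E^{2d,d}(X)$. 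Also note that the implication ``$C^p(X;\hat\pi_{d-p}(\E)_*)_d=0$ for $p<d$, hence $E_2^{p,q}=0$ whenever $q>d$'' is unjustified as written---you only treated the antidiagonal. Once the sign is corrected, all of this disappears and the comparison trick becomes unnecessary.
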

Obviously, this isomorphism is natural with respect to pullbacks.
It is also compatible with pushouts induced by the Gysin morphism
 of a projective morphism $f:Y \rightarrow X$ of smooth connected schemes.

\begin{rem}
If $X$ is projective smooth, the corollary can be reformulated using duality
 (see \cite[th. 5.23]{Deg8}) 
 by saying that $\varphi$ induces an isomorphism in homology:
$$
\MGL_0(X) \xrightarrow{\sim} CH_0(X).
$$
If $X$ is only smooth,
 under the assumption of resolution of singularities,
 one can replace the left hand side in the previous isomorphism by
 the Borel Moore algebraic cobordism:
$$
\MGL^c_0(X) \xrightarrow{\sim} CH_0(X).
$$
\end{rem}

\subsection{Cohomology spectral sequence and cycle classes}

\begin{num}
Consider a spectrum $\E$.
 The truncation functor for the homotopy $t$-structure
 gives a canonical (functorial) tower in $\SH(k)$:
$$
\hdots \rightarrow \E_{\geq p} \rightarrow \E_{\geq p+1} \rightarrow \hdots
$$
called the \emph{Postnikov tower} of $\E$.

Then for any smooth scheme $X$ and any integer $n \in \ZZ$,
 the Postnikov tower of $\E(n)$ gives the following spectral sequence:
\begin{equation} \label{eq:ssp_t}
E_{2,t}^{p,q}(X,\E)=H^q_{\nis}(X,\pi_{n-p}(\E)_n) \Rightarrow \E^{p+q,n}(X)
\end{equation}
which we simply call the \emph{cohomology spectral sequence}.

Note that, when $\E$ is weakly orientable, Corollary
 \ref{cor:comput_coniv_ssp1}
 gives a canonical isomorphism between the $E_2$-term
  of the coniveau spectral sequence \eqref{eq:ssp_coniv}
 and the $E_2$-term of the cohomology spectral sequence. 
\end{num}

\begin{rem}
Using the same argument as in the proof of \cite[th. 6.4]{Deg9}
 and a construction of Gillet and Soul\'e\footnote{More precisely,
 ore replaces the use of the shifted filtration of Deligne on complexes  
 by its generalization for spectra given in \cite{GS}.},
 one can show that this isomorphism is compatible with the differential 
 on each $E_2$-term and that they induces an isomorphism of spectral sequences.
 However, in the remainings,
  we will only need to be able to compute the $E_2$-term using
  the isomorphism of Corollary \ref{cor:comput_coniv_ssp1}.
\end{rem}

\begin{num} \label{num:comp_ssp_t}
The advantage of the spectral sequence \eqref{eq:ssp_t}
 compared to the coniveau spectral sequence
 is that it is obviously functorial in $X$ (contravariantly).

Assume that $\E$ has the structure of a weak $\MGL$-module.
Consider smooth schemes $X$, $Y$ and a projective morphism $f:Y \rightarrow X$
 equidimensional of dimension $n$.
 Then according to the construction of  \cite[def. 5.12]{Deg8},
 we get a canonical morphism of $\MGL$-modules:
$$
f^*:\uMGL(X)(n)[2n] \rightarrow \uMGL(Y).
$$
Using diagram \eqref{eq:strict_to_weak_mod_ppty} in the case $R=\MGL$, 
 this map induces after applying the functor $\E^{i,j}$ a canonical pushout:
\begin{equation} \label{eq:Gysin_coh}
\E^{i,j}(Y) \xrightarrow{f_*} \E^{i-2n,j-n}(X).
\end{equation}
\end{num}
\begin{lm} \label{prop:cov_ssp_t}
Consider the notations above.
Then the Gysin map $f^*$ induces a morphism of spectral sequences:
$$
E_{2,t}^{p,q}(\uMGL(Y),\E)=H^q_{\nis}(Y,\pi_{-p}(\E)_0)
 \rightarrow H^{q-n}_{\nis}(Y,\pi_{-p}(\E)_{-n})=E_{2,t}^{p,q}(\uMGL(X)(n)[2n],\E)
$$
which converges to the morphism \eqref{eq:Gysin_coh}.
\end{lm}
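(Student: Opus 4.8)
The plan is to promote the cohomology spectral sequence \eqref{eq:ssp_t} from an invariant of a smooth scheme to an invariant of an object of $\smod{\MGL}$, contravariantly functorial, and then simply to evaluate it at the Gysin map $f^*\colon \uMGL(X)(n)[2n]\to\uMGL(Y)$. First I would lift the Postnikov tower $\cdots\to \E_{\geq p+1}\to \E_{\geq p}\to\cdots$ of $\E$, together with its graded pieces $\E_{=p}$ and the connecting maps $\E_{\geq p}\to \E_{=p}\to \E_{\geq p+1}[1]$, to a diagram in $\wmod{\MGL}$. For the truncations this is automatic: since $\MGL$ is connective, smashing with $\MGL$ preserves non-negativity (remark \ref{rem:generators_positive}), so $(\MGL\wedge \E_{\geq p})[-p]$ is non-negative and the composite $\MGL\wedge \E_{\geq p}\to\MGL\wedge \E\xrightarrow{\gamma_\E} \E$ factors uniquely through $\E_{\geq p}\to \E$; by this uniqueness each $\E_{\geq p}$ acquires a canonical weak $\MGL$-module structure for which the tower maps $\E_{\geq p+1}\to \E_{\geq p}$ and the augmentation $\E_{\geq p}\to \E$ are morphisms of weak modules.

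Once the lift exists, producing the morphism of spectral sequences is formal. Applying, for $M$ any of the compact free objects $\uMGL(Y)$ or $\uMGL(X)(n)[2n]$, the functor $N\mapsto \Hom_{\wmod{\MGL}}(\cO'_\MGL(M),S^{i,j}\wedge N)$ to the lifted Postnikov tower yields an exact couple: indeed, by the free--forgetful adjunction \eqref{eq:SH_wmod} together with the identity $\cO'_\MGL\circ L_\MGL = L^w_\MGL$ of \eqref{eq:strict_to_weak_mod_ppty}, one has $\Hom_{\wmod{\MGL}}(\cO'_\MGL(\uMGL(X)(m)[2m]),S^{i,j}\wedge N)\cong \Hom_{\SH}(\sus X_+,S^{i-2m,j-m}\wedge N)$ for every weak module $N$, so the functor is exact on the underlying distinguished triangles of the tower. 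A routine reindexing identifies the associated spectral sequence with \eqref{eq:ssp_t} for $X$ with twist shifted by $m$: for $M=\uMGL(Y)$ one gets $E_2^{p,q}=H^q_\nis(Y,\pi_{-p}(\E)_0)$ abutting to $\E^{p+q,0}(Y)$, and for $M=\uMGL(X)(n)[2n]$ one gets $E_2^{p,q}=H^{q-n}_\nis(X,\pi_{-p}(\E)_{-n})$ abutting to $\E^{p+q-2n,-n}(X)$. Since $\cO'_\MGL$ is a functor and $\Hom_{\wmod{\MGL}}$ is bifunctorial, precomposition with $\cO'_\MGL(f^*)$ is automatically compatible with the structure morphisms $\alpha,\beta,\gamma$ of the exact couple (these come from postcomposition with the maps of the tower), hence induces the desired morphism of spectral sequences; on abutments it is the map obtained by applying $\E^{i,j}(-)$ to $f^*$, which by the very construction recalled in \ref{num:comp_ssp_t} is the Gysin pushout \eqref{eq:Gysin_coh}, and convergence of the morphism propagates from that of the two spectral sequences.

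The step I expect to be the main obstacle is the one glossed over above: lifting the graded pieces $\E_{=p}$ and the connecting maps $\E_{=p}\to \E_{\geq p+1}[1]$ to $\wmod{\MGL}$, which is delicate because $\wmod{\MGL}$ is neither triangulated nor monoidal, so there is no cofiber there to appeal to. I would handle it as follows. First, any weak $\MGL$-module $\E$ is weakly orientable: the relation $\eta\wedge 1_\E=\gamma_\E\circ\bigl((\eta\wedge 1_\MGL)\wedge 1_\E\bigr)\circ\bigl(1_{\GG}\wedge u_\MGL\wedge 1_\E\bigr)$, obtained by inserting the unit $u_\MGL\colon S^0\to\MGL$, exhibits $\eta\wedge 1_\E$ as factoring through $\eta\wedge 1_\MGL$, which vanishes because $\MGL$ is oriented (equivalently $\pi_0(\MGL)_{-1}(k)=K^M_{-1}(k)=0$, using $\pi_0(\MGL)_*\simeq \uK_*$). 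Consequently each homotopy module $\pi_p(\E)_*$ is orientable, so by Corollary \ref{cor:carac_mhtpo}(2) the spectrum $\E_{=p}[-p]=H(\pi_p(\E)_*)$ carries a canonical $\HH$-module, hence weak $\MGL$-module, structure; a short verification using the naturality of the truncation functors and the uniqueness of the $\uK_*$-module structure on an orientable homotopy module (Corollary \ref{cor:carac_mhtpo}(1)) shows that $\E_{\geq p}\to \E_{=p}$ and $\E_{=p}\to \E_{\geq p+1}[1]$ are then morphisms of weak modules. With the whole Postnikov tower lifted to $\wmod{\MGL}$, the argument above applies verbatim.
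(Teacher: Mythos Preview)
The paper states this lemma without proof, so there is no argument to compare against; I will evaluate your proposal on its own merits.

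Your overall strategy is sound and is exactly what the notation $E_{2,t}^{p,q}(\uMGL(Y),\E)$ in the statement suggests: reinterpret the cohomology spectral sequence as coming from $\Hom_{\wmod\MGL}\bigl(\cO'_\MGL(M),S^{i,j}\wedge(-)\bigr)$ applied to the Postnikov tower, and then use contravariant functoriality in $M$ along $f^*$. The adjunction step via \eqref{eq:SH_wmod} and \eqref{eq:strict_to_weak_mod_ppty} is correct, as is the argument that the truncations $\E_{\geq p}$ inherit unique weak $\MGL$-module structures compatible with the tower maps.

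Where your proposal is shaky is the treatment of the graded pieces and connecting maps. Invoking Corollary~\ref{cor:carac_mhtpo} to put an $\HH$-module (hence $\MGL$-module) structure on $\E_{=p}$ is legitimate, but the ``short verification'' that $\E_{\geq p}\to\E_{=p}$ and, especially, the boundary $\E_{=p}\to\E_{\geq p+1}[1]$ are maps of weak $\MGL$-modules is not justified by what you cite: uniqueness of the $\uK_*$-module structure lives in the heart, whereas the connecting map has target $\E_{\geq p+1}[1]$, which is not in the heart, so Corollary~\ref{cor:carac_mhtpo}(1) gives no direct control over it.

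A cleaner fix avoids the detour through $\HH$ entirely and uses only the $t$-structure. From your commuting square between the module maps on $\E_{\geq p+1}$ and $\E_{\geq p}$, axiom TR3 produces a fill-in $h\colon\MGL\wedge\E_{=p}\to\E_{=p}$ extending to a morphism of triangles. This $h$ is \emph{unique}: two fill-ins differ by a map factoring through $\MGL\wedge\E_{\geq p+1}[1]\to\E_{=p}$, and $\Hom_{\SH}\bigl(\MGL\wedge\E_{\geq p+1}[1],\E_{=p}\bigr)=0$ because the source lies in $\SH_{\geq p+2}$ (connectivity of $\MGL$) while the target lies in $\SH_{\leq p}$. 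The same vanishing (with extra factors of $\MGL$) lets you verify the unit and associativity axioms for $h$ by precomposing with $\MGL^{\wedge k}\wedge\E_{\geq p}\to\MGL^{\wedge k}\wedge\E_{=p}$. Since the unique fill-in already sits in a morphism of triangles, the third square commutes automatically, i.e.\ the connecting map $\E_{=p}\to\E_{\geq p+1}[1]$ is a map of weak $\MGL$-modules. With this in hand, the rest of your argument goes through verbatim.
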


Using the cohomology spectral sequence,
 we get the following proposition which gives mild conditions
 on a spectrum $\E$ for the existence of cycle classes in
 $\E$-cohomology satisfying the usual properties:
\begin{prop} \label{prop:eta_cycle_class}
Let $\E$ be a (weak) ring spectrum such that:
\begin{enumerate}
\item[(a)] The homotopy module $\pi_0(\E)_*$ is orientable.
\item[(b)] For any function field $K/k$, $\E^{n,m}(K)=0$ 
 if $n<0$ and $m<0$.
\end{enumerate}
Then the following conditions hold:
\begin{enumerate}
\item The spectrum $\E$ admits an orientation whose associated
 formal group law is additive.
\item For any smooth scheme $X$ and any integer $n \geq 0$,
 there exists a canonical morphism of abelian groups:
$$
\sigma_X:CH^n(X) \rightarrow \E^{2n,n}(X)
$$
which is natural with respect to pullbacks, projective pushforwards
 and compatible with pro\-ducts. 
\end{enumerate}
\end{prop}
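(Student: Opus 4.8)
The plan is to establish the two assertions in turn, deriving the orientation purely from hypothesis (b) and then feeding it, together with (a), into the cycle module and coniveau machinery of Section~\ref{sec:proof}.

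For (1), I would first produce the orientation. It is classical that a ring spectrum $\E$ admits an orientation as soon as one exhibits a class $c\in\tilde\E^{2,1}(\PP^\infty_k)$ whose image in $\tilde\E^{2,1}(\PP^1_k)\simeq\E^{0,0}(k)$ is the unit $1$. I obtain such a class by computing $\tilde\E^{2,1}(\PP^\infty)=\varprojlim_m\tilde\E^{2,1}(\PP^m)$: for every $m\geq 1$ the cofiber sequence $\sus\PP^m_+\to\sus\PP^{m+1}_+\to\PP^{m+1}/\PP^m\simeq S^{2m+2,m+1}$ yields a long exact sequence in $\E$-cohomology whose two relevant neighbouring terms are $\tilde\E^{2,1}(S^{2m+2,m+1})=\E^{-2m,-m}(k)$ and $\tilde\E^{3,1}(S^{2m+2,m+1})=\E^{1-2m,-m}(k)$, and both vanish by (b) since for $m\geq 1$ all the indices are strictly negative. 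Hence $\tilde\E^{2,1}(\PP^{m+1})\to\tilde\E^{2,1}(\PP^m)$ is an isomorphism for $m\geq 1$, so the tower is eventually constant, its $\varprojlim^1$ vanishes, and $\tilde\E^{2,1}(\PP^\infty)\xrightarrow{\ \sim\ }\tilde\E^{2,1}(\PP^1)=\E^{0,0}(k)$; take $c$ to be the preimage of $1$. The same argument gives $\tilde\E^{2,1}(\PP^\infty\times\PP^\infty)\simeq\E^{*,*}(k)[[x,y]]$ with $x,y$ in bidegree $(2,1)$, so the associated formal group law is $F(x,y)=\sum_{i,j}a_{ij}x^iy^j$ with $a_{ij}\in\E^{2-2i-2j,\,1-i-j}(k)$; the unit axiom forces $a_{00}=0$, $a_{10}=a_{01}=1$, while for $i,j\geq 1$ one has $1-i-j<0$ and $2-2i-2j<0$, hence $a_{ij}=0$ by (b). Thus $F(x,y)=x+y$, proving~(1).

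For (2), note first that by~(1) the spectrum $\E$ is oriented, so $\eta$ acts trivially on $\E^{*,*}$ (Remark~\ref{rem:mhor&P^1}); in particular $\E$ is weakly orientable, all its homotopy modules are orientable, the spectral sequence~\eqref{eq:ssp_coniv} applies to $\E$, and the orientation equips $\E$ with a (weak) $\MGL$-module structure, so the Gysin formalism of~\cite{Deg8} recalled in Section~\ref{sec:recall_Gysin} and paragraph~\ref{num:comp_ssp_t} is available for $\E$. I would then propagate (b) from function fields to cohomology with supports: a standard niveau argument based on the coniveau spectral sequence of Example~\ref{eq:general_coniv_ssp}, whose $E_1$-terms along a closed $S\subset X$ are sums of groups $\E^{a-2j,\,b-j}(\kappa(x))$ over $x\in S$ of codimension $j$ in $X$, shows that $\E^{a,b}_S(X)=0$ whenever $S$ has codimension $\geq c$ in the smooth scheme $X$ and $a-2c<0$, $b-c<0$. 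Now let $Z\subset X$ be an integral closed subscheme of codimension $n$. Since $k$ is perfect there is a dense open $U\subset X$ with $Z_U:=Z\cap U$ smooth and $X\setminus U$ of codimension $\geq n+1$; the Gysin map yields a class $i_{U*}(1)\in\E^{2n,n}(U)$, and since $\E^{2n,n}_{X\setminus U}(X)=\E^{2n+1,n}_{X\setminus U}(X)=0$ by the previous vanishing, this class lifts uniquely to a class $\sigma_X([Z])\in\E^{2n,n}(X)$. Extending $\ZZ$-linearly gives a homomorphism from codimension-$n$ cycles to $\E^{2n,n}(X)$.

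It remains to check that $\sigma_X$ factors through $CH^n(X)$ and satisfies the usual properties. To see rational invariance I compare with Rost's formalism: by~(a) the homotopy module $\pi_0(\E)_*$ is orientable, so (Corollary~\ref{cor:comput_coniv_ssp1}) it defines a cycle module $\hat\pi_0(\E)_*$ with $H^n_{\nis}(X,\pi_0(\E)_n)\simeq A^n(X;\hat\pi_0(\E)_*)_n$; and the unit of the ring spectrum $\E$, which kills $\eta$, factors through $\uK_*\to\pi_0(\E)_*$ by~\eqref{eq:sec_Morel}, inducing a morphism of cycle modules $\uK_*\to\hat\pi_0(\E)_*$ and hence a map $CH^n(X)=A^n(X;\uK_*)_n\to A^n(X;\hat\pi_0(\E)_*)_n$. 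Unwinding the classes $i_{U*}(1)$ through the purity isomorphisms of Proposition~\ref{prop:comput_E_coniv} and using Proposition~\ref{prop:residues_units} to identify the arising correction terms with the residue maps $\partial_v$ of $\hat\pi_0(\E)_*$, one recognizes $\sigma_X([Z])$ as the image of the class of $Z$ under this map followed by the edge homomorphism $A^n(X;\hat\pi_0(\E)_*)_n\to\E^{2n,n}(X)$ of the coniveau spectral sequence for $\E$; this edge map is well defined because, by (b), the differentials leaving $E_r^{n,n}$ vanish, their targets $A^{n+r}(X;\hat\pi_{1-r}(\E)_*)_n$ being zero for $r\geq 2$. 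In particular $\sigma_X$ descends to $CH^n(X)$. Contravariance in $X$ is the naturality of this description; compatibility with projective pushforward follows from Lemma~\ref{prop:cov_ssp_t} together with the functorialities (G2a)--(G2d); and compatibility with products follows from the multiplicativity of the coniveau spectral sequence and of the cycle module $\hat\pi_0(\E)_*$ (Lemma~\ref{lm:mot_coh_Milnor}). The main obstacle is precisely this last step: matching the geometrically defined fundamental classes $i_{U*}(1)$ with Rost's cycle-module differential across strata, and checking the uniform vanishing of the support groups $\E^{*,*}_S(X)$ from (b), whereas the existence of the orientation and the additivity of its formal group law are direct consequences of~(b).
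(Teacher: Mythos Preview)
Your argument is correct, but it follows a route genuinely different from the paper's.

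\medskip

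\textbf{What the paper does.} The paper proves (2) first and deduces (1) as the special case $n=1$. Concretely, assumption (a) together with the exact sequence~\eqref{eq:sec_Morel} and Corollary~\ref{cor:carac_mhtpo} yields a morphism of monoids $\uK_*\to\pi_0(\E)_*$ in $\hmor$, hence by Corollary~\ref{cor:comput_coniv_ssp1} a map
\[
CH^n(X)=A^n(X;K^M_*)_n\longrightarrow A^n(X;\hat\pi_0(\E)_*)_n=H^n_\nis(X,\pi_0(\E)_n)=E_{2,t}^{n,n}(X,\E(n)).
\]
Assumption (b) then forces the relevant vanishing in the \emph{cohomology} spectral sequence~\eqref{eq:ssp_t} so that one obtains a composite
\[
E_{2,t}^{n,n}(X,\E(n))\twoheadrightarrow E_{\infty,t}^{n,n}(X,\E(n))\hookrightarrow \E^{2n,n}(X),
\]
and $\sigma_X$ is the composition of these two maps. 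Compatibility with pullbacks, pushforwards and products is read off from the functoriality of the cohomology spectral sequence (paragraph~\ref{num:comp_ssp_t} and Lemma~\ref{prop:cov_ssp_t}). The orientation and the additivity of its formal group law then follow because $\sigma_X$ for $n=1$ is a group homomorphism $\pic(X)\to\E^{2,1}(X)$.

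\medskip

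\textbf{What you do differently.} You reverse the logical order: you first extract the orientation purely from (b) by computing $\tilde\E^{2,1}(\PP^\infty)$ through the cell filtration of $\PP^m$, and then read off additivity of the formal group law from the bidegrees of its coefficients. This is a clean observation---it isolates (b) as the sole source of the orientation and of additivity, whereas in the paper this is only visible \emph{a posteriori}. One small imprecision: the sentence ``the same argument gives $\tilde\E^{2,1}(\PP^\infty\times\PP^\infty)\simeq\E^{*,*}(k)[[x,y]]$'' is not literally the same cofiber computation; what you really use here is the projective bundle formula, which follows from the orientation you have just produced (together with a $\varprojlim{}^1$-vanishing).

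For (2) you take the classical geometric route: construct fundamental classes $i_{U*}(1)$ via the Gysin morphism furnished by the orientation, extend them from $U$ to $X$ using the support vanishing $\E^{2n,n}_S(X)=\E^{2n+1,n}_S(X)=0$ for $\operatorname{codim}S\geq n+1$ (which you correctly derive from (b) via coniveau), and only afterwards invoke the cycle-module map and the edge homomorphism to check descent to $CH^n(X)$. The paper bypasses the geometric construction entirely and defines $\sigma_X$ directly as the composite above; in effect your comparison step \emph{is} the paper's definition. Your approach has the virtue of making the class $\sigma_X([Z])$ explicit, but it is longer and the ``unwinding'' you flag as the main obstacle is precisely the work the paper avoids by never introducing $i_{U*}(1)$ in the first place.
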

\begin{proof}
According to assumption (a) and Corollary \ref{cor:carac_mhtpo},
 the unit map $S^0 \rightarrow \E$ induces a canonical map
$$
\uK_*=\uK_* \otimes \pi_0(S^0)_*
 \rightarrow \uK_* \otimes \pi_0(\E)_*=\pi_0(\E)_*
$$
which is a morphism of monoids in $\hmor$. In particular,
 according to Corollary \ref{cor:comput_coniv_ssp1},
 we get for any smooth scheme $X$
  and any integer $n \geq 0$ a canonical morphism:
\begin{equation} \label{eq:cycles1}
CH^n(X)=A^n(X,K_*)_n
 \rightarrow A^n(X,\hat \pi_0(\E)_*)_n=H^n_{\nis}(X,\pi_0(\E)_n)
\end{equation}
compatible with pullbacks, pushouts and products.

Applying again Corollary \ref{cor:comput_coniv_ssp1},
 assertion (b) implies that
 the term $E_{2,t}^{p,q}(X,\E(n))$ of the cohomology spectral sequence  
 is zero if $p>min(q,n)$.
Thus, we get a canonical composite map:
\begin{equation} \label{eq:cycles2}
E_{2,t}^{n,n}(X,\E(n))
 \xrightarrow{(1)} E_{\infty,t}^{n,n}(X,\E(n))
 \xrightarrow{(2)} \E^{2n,n}(X)
\end{equation}
where the map (1) is obtain as the sequence of epimorphism deduced
from the spectral sequence \eqref{eq:ssp_t} and the map (2)
is the edge morphism (which is a monomorphism).
This composite map is compatible with products and pushouts
 according to paragraph \ref{num:comp_ssp_t}.
The fact it is compatible with products follows from the
 construction of products on the spectral sequence of
 the type \eqref{eq:ssp_t} (see \cite{McC}).

The composition of \eqref{eq:cycles1} and \eqref{eq:cycles2}
 gives the map of the point (2). In the case $n=1$,
 we get an orientation of $\E$ whose associated formal group
 law is additive because $\sigma_X$ is a morphism of groups.
\end{proof}

\begin{rem}
According to a result announced by Morel and Voevodsky,
 at least over a field of characteristic $0$,
 the map induced by the morphism $\varphi$ of paragraph \ref{num:MGH_et_H}
 induces an isomorphism of ring spectra
$$
\MGL/\{a_{ij}, (i,j)\in \NN^2\} \rightarrow \HH
$$
where 
 $a_{ij}:S^{2(i+j),i+j} \rightarrow \MGL$
 denotes the coefficients of the formal group law of
 $\MGL$ equipped with its obvious orientation.

The orientation of the first point of the previous proposition
 corresponds to a morphism of ring spectra
$$
\psi:\MGL \rightarrow \E
$$
such that $\psi \circ a_{ij}$ is zero.
In particular, $\psi$ induces a canonical morphism of ring spectra:
$$
\sigma:\HH \rightarrow \E.
$$
which gives back the cycle class of point (2).
\end{rem}

\bibliographystyle{amsalpha}
\bibliography{mhtpo}

\end{document}